\newcommand{\thickhline}{%
    \noalign {\ifnum 0=`}\fi \hrule height 1pt
    \futurelet \reserved@a \@xhline
}
\newcolumntype{"}{@{\hskip\tabcolsep\vrule width 1pt\hskip\tabcolsep}}
\definecolor{blues1}{RGB}{198, 219, 239} 
\definecolor{blues2}{RGB}{158, 202, 225} 
\definecolor{blues3}{RGB}{107, 174, 214} 
\definecolor{blues4}{RGB}{49, 130, 189} 
\definecolor{blues5}{RGB}{8, 81, 156} 
\definecolor{blues6}{RGB}{2, 34, 78} 
\definecolor{bg}{RGB}{142, 207, 201} 
\definecolor{mygreen}{HTML}{0D8B43}
\definecolor{green1}{RGB}{0, 150, 0}
\definecolor{MyGreen}{RGB}{054,159,084}  
\definecolor{MyOrange}{RGB}{255,140,0}  
\definecolor{MyPurple}{RGB}{128,0,128}
\definecolor{MyBlue}{RGB}{037,069,128}  
\definecolor{MyYellow}{RGB}{171,110,031}  
\definecolor{workflowblue}{RGB}{202,222,240}
\numberwithin{equation}{section} 
\numberwithin{table}{section} 
\numberwithin{figure}{section} 
\newtheorem{theorem}{Theorem}[] 
\newtheorem{lemma}{Lemma}[]
\newtheorem{remark}{Remark}[section]
\theoremstyle{definition}
\newcommand{\T}{\mathrm{T}} 
\newcommand{\vect}[1]{\boldsymbol{#1}} 
\newcommand{\R}{\mathbb{R}} 
\newcommand{\sgn}{\text{sgn}}
\newcommand{\1}{\vect{1}} 
\newcommand{\I}{\mathcal{I}} 
\pgfplotsset{compat=1.18}
\begin{document}

\title{Fast projection onto the top-$k$-sum constraint}

\author{Jianting Pan\textsuperscript{a} \and Ming Yan\textsuperscript{a}}

\date{
  \textsuperscript{a}\,School of Data Science, The Chinese University of Hong Kong, Shenzhen
}


\maketitle
\begin{abstract}
This paper develops an efficient algorithm for computing the Euclidean projection onto the top-$k$-sum constraint, a key operation in financial risk management and matrix optimization problems. Existing projection methods rely on sorting and therefore incur an initial $O(n\log n)$ complexity, which limits their scalability in high-dimensional settings. To address this difficulty, we revisit the Karush-Kuhn-Tucker (KKT) conditions of the projection problem and introduce relaxed conditions that remain sufficient for characterizing the solution. These conditions lead to a simple geometric interpretation: finding the solutions is equivalent to locating the intersection of two monotone piecewise linear functions. Building on this insight, we propose an iterative and highly efficient algorithm that searches directly for the intersection point and completely avoids all sorting procedures. We prove that the algorithm converges globally and reaches the exact solution in a finite number of iterations. Extensive numerical experiments further demonstrate that the proposed algorithm substantially outperforms existing algorithms and exhibits empirical $O(n)$ complexity across a broad range of problem instances. A Julia implementation of the algorithm is available in our public GitHub repository~\footnote{\url{https://github.com/PanT12/top-k-sum}}.\\
\\
\textbf{Keywords }Projection; top-$k$-sum constraint; KKT conditions \\
\textbf{Mathematics Subject Classification (2020)} 65K10; 90-04; 90-08; 90C06; 90C25
\end{abstract}

\section{Introduction}
\label{sec: intro}
In this paper, we consider the Euclidean projection onto the top-$k$-sum, also known as max-$k$-sum~\cite{maxksum} constraint: 
\begin{equation}
\label{opt}
\begin{aligned}
    & \min_{\vect{x} \in \R^n}\ \frac{1}{2}\| \vect{x} - \vect{a} \|^2 \\
    & \text{subject to}\ x \in \mathcal{T}_{(k)}^r:=\left\{\vect{x} \in \R^n: \T_{(k)}(\vect{x}):=\sum_{i=1}^k \Vec{x}_i \leq r\right\}
\end{aligned}
\end{equation}
where $r \in \R$, $k \in \{1,2,\dots,n\}$ and sequence $\vect{a} \in \R^n$ are given, $\Vec{x}_1 \geq \Vec{x}_2 \geq \dots \geq \Vec{x}_n$ are the components of $\vect{x}$ in nonincreasing order. $\mathrm{T}_{(k)}(\vect{x})$ represents the sum of the largest $k$ elements of $\vect{x}$ and is closely related in many fields. For instance, it is conceptually linked to the $k$-winners-take-all (kWTA) problem, where recent distributed models focus on identifying the top $k$ elements through optimization to avoid centralized sorting~\cite{wang2023distributed,zhang2023single,zhang2022distributed}. In addition, in financial risk measurement~\cite{rockafellar2000optimization,krokhmal2002portfolio}, it arises in limiting a quantile of the distribution, as in the conditional value at risk (CVaR), also known as the average value at risk (AVaR) or superquantile, when $n$ scenarios are generated in this distribution with equal probability. Furthermore, it is relevant to matrix optimization problems involving a matrix’s Ky-Fan $k$-norm, defined as the sum of its $k$ largest singular values~\cite{overton1993optimality,pataki1998rank,pirzada2024ky,doan2022low}.  

Given that problem~\eqref{opt} is a strongly convex quadratic program, it is sufficient to derive the Karush-Kuhn-Tucker (KKT) conditions to uniquely characterize the solution. Based on this, Wu et al.~\cite{wu2014moreau} designed a finite-termination algorithm that performs an exhaustive grid-search to find all possible solutions until the KKT conditions are satisfied. However, this approach suffers from high computational complexity $O(k(n-k))$. In some applications, the value $k$ is usually set to $k = \lfloor (1-\tau)n\rfloor$ for some fixed proportion $\tau \in (0,1)$, leading to $O(n^2)$ complexity~\cite{roth2025n,roth2024fast}, which makes the grid-search process slow and impractical for large-scale problems.

To address this issue, Roth and Cui~\cite{roth2025n} introduced two finite-termination algorithms with $O(n)$ complexity, significantly alleviating the computational burden. The first reformulates the projection problem as a parametric linear complementarity problem (PLCP)~\cite{cottle2009linear} and solves it using a specialized pivoting method. The second method improves upon the grid-search technique by incorporating a refined updating rule that accelerates the search. More recently, Luxenberg et al.~\cite{luxenberg2025operator} proposed another $O(n)$ algorithm that iteratively decreases elements of $\vect{a}$ until the sum of the largest $k$ elements is equal to $r$. 

Despite their linear complexity, these algorithms all require sorting the input vector, which can become a performance bottleneck when $n$ is large. In some of their numerical experiments, the time spent on sorting can dominate the total runtime, and it sometimes exceeds the time taken by the algorithm itself. 
This motivates the need for more efficient methods that avoid sorting altogether.

In this paper, we propose a novel algorithm that eliminates the need for sorting the input vector, thereby significantly reducing the computational complexity associated with the Euclidean projection onto the top-$k$-sum constraint. We begin by examining the Karush-Kuhn-Tucker (KKT) conditions for this projection problem and introducing relaxed KKT conditions that are sufficient to characterize the solution. Leveraging these relaxed KKT conditions, we demonstrate that the projection solution can be interpreted geometrically as the intersection point of two monotone lines and introduce \textbf{E}fficient \textbf{I}ntersection \textbf{P}oint \textbf{S}earching (\textbf{EIPS}) to find this intersection point. In contrast to existing methods, which perform a grid search to identify the two relevant indices, our approach focuses on iteratively searching for the intersection point of both lines. This leads to a more efficient algorithm and avoids the need for sorting. Furthermore, we establish the global convergence of our algorithm, proving that the sequence generated by our iterative search converges to the intersection point in a finite number of steps. Finally, through extensive numerical experiments, we demonstrate that our algorithm is significantly faster than the existing methods and achieves empirical $O(n)$ complexity.

The paper is organized as follows. In Section~\ref{sec: kkt conditions}, we analyze the KKT conditions of the projection problem~\eqref{opt} and provide relaxed KKT conditions to eliminate the sorting requirement. The details of our proposed algorithm, as well as the main theoretical results and some acceleration tricks, are given in Section~\ref{sec: eips}. Finally, we show the numerical performance of EIPS and compare it with existing methods on a range of problems in Section~\ref{sec: experiment}. The paper concludes with a conclusion section.

\textbf{Notation} Through this paper, for any $\vect{x} \in \R^n$, we denote its $i$-th entry by $x_i$. The vector $\Vec{\vect{x}}$ denotes the components of $\vect{x}$ rearranged in descending order, i.e., $\Vec{x}_1 \geq \Vec{x}_2 \geq \cdots \geq \Vec{x}_n$, with the convention that $\Vec{x}_0 := +\infty$ and $\Vec{x}_{n+1} := -\infty$. The notation $|\vect{x}|$ refers to the vector in $\R^n$ whose $i$-th entry is $|x_i|$. The sign vector of $\vect{x}$ is denoted by $\sgn(\vect{x}) \in \R^n$, where $(\sgn(\vect{x}))_i = 1$ if $x_i \geq 0$ and $-1$ otherwise, for $i = 1,\dots,n$. Given two vectors $\vect{a}, \vect{b} \in \R^n$, we define $\min\{\vect{a}, \vect{b}\}$ as the vector in $\R^n$ with the $i$-th entry equal to $\min\{a_i, b_i\}$. For an index set $\I \subseteq [n]:= \{1,2,\dots,n\}$, we denote by $\vect{x}_{\I} \in \R^{|\I|}$ the subvector of $\vect{x}$ corresponding to the indices in $\I$, where $|\I|$ denotes the cardinality of $\I$.

For any thresholds $\theta_1, \theta_2 \in \R$, we define the index set
\[\I\left(\vect{x},\theta_1, \theta_2\right):=\{i\in[n]: \theta_1 \leq x_i < \theta_2\}.\] 
For convenience, we also define the one-sided index sets:
\[\I\left(\vect{x}, \theta_1, +\right):=\{i\in[n]:  \theta_1 \leq x_i\},\quad \I\left(\vect{x},-, \theta_2\right):=\{i\in[n]:  x_i < \theta_2\}.\] 
We denote by $\1_n$ the $n$-dimensional vector of all ones. The Hadamard (element-wise) product of two vectors $\vect{x}, \vect{y} \in \R^n$ is denoted by $\vect{x} \circ \vect{y} \in \R^n$, whose $i$-th entry is $(\vect{x} \circ \vect{y})_i = x_i y_i$.

\section{KKT conditions analysis}
\label{sec: kkt conditions}
In this section, we derive the KKT conditions of the projection problem~\eqref{opt} in Section~\ref{subsec: KKT conditions} and show how to solve it. In addition, we relax the sorting requirement of the KKT conditions in Section~\ref{subsec: relaxation}. 

\subsection{KKT conditions}
\label{subsec: KKT conditions}
The descending order in $\vect{a}$ is preserved in the optimal solution $\vect{x}^{\star}$; i.e., if $a_i \geq a_j$, then $x_i^{\star} \geq x_j^{\star}$. Otherwise, swapping the two values would result in an increase in the objective function value. The optimization problem~\eqref{opt} has a strongly convex objective function and linear constraints. Therefore, its KKT conditions are both necessary and sufficient for uniquely characterizing the solution $\vect{x}^{\star}$.

Its KKT conditions are derived in Appendix~\ref{app: kkt derivation}, and they can be described as follows:
\begin{subequations}
\label{kkt: kkt conditions}
\begin{align}
    \label{kkt: same area}& \sum_{i=1}^{k}\max\{u^{\star} - \vec{a}_i,0\} = \sum_{j=k+1}^n \max\{\vec{a}_j-l^{\star}, 0\},\\
    \label{kkt: top-k-sum constraint} &  \sum_{i=1}^n \max\{a_i - u^{\star}, 0\} + l^{\star}k=\min\left\{\T_{(k)}(\vect{a}), r\right\},
\end{align}
\end{subequations}
where $u^{\star} \geq \Vec{a}_k \geq l^{\star}$. 
Based on $\{a_i\}_{i=1}^n$, we define three sets ($\alpha,\beta,\gamma$) as
\begin{align}
\label{equ: related set} & \alpha := \{ i: a_i > u^{\star}\}, \quad \beta := \{i: u^{\star} \geq a_i \geq l^{\star}\}, \quad \gamma := \{i: a_i < l^{\star}\},
\end{align}
and we have the optimal solution
\begin{align}\label{kkt: solution form}
      &\vect{x}^{\star}_\alpha = \vect{a}_{\alpha} -(u^{\star}-l^{\star})\1_{|\alpha|}, \quad \vect{x}^{\star}_\beta = l^{\star}\1_{|\beta|}, \quad \vect{x}^{\star}_\gamma = \vect{a}_\gamma.
\end{align}

There are two special cases where \( k = 1 \) and \( k = n \), for which we can compute the optimal solution directly. When \( k = 1 \), the solution is given by \( \vect{x}^{\star} = \min\{r\1_n, \vect{a}\} \). When \( k = n \), we have \( a_i\geq u^{\star} \) for all \( i \), and 
\[
n(u^{\star} - l^{\star}) = \1_n^\top \vect{a} - \min\left\{\1_n^\top \vect{a}, r\right\} = \max\left\{\1_n^\top \vect{a}, r\right\} - r.
\]
Thus $\vect{x}^{\star} = \vect{a} +  \left(r - \max\{\1_n^\top \vect{a}, r\}\right)\1_n/n$.


However, finding the optimal solution is not straightforward for other \(k\) values. To solve it, \cite{roth2025n,luxenberg2025operator,wu2014moreau} exploited the properties of the KKT conditions derived above and introduced several grid-search procedures to find $(k_0:=|\alpha|,k_1:=|\alpha|+|\beta|)$ after the elements in $\vect{a}$ are sorted. Even though finding $(k_0, k_1)$ may be very fast, the time taken for sorting is greater than the time required to find $(k_0,k_1)$.

In this paper, we propose a method to find $(u^{\star},l^{\star})$ directly without sorting the elements in $\vect{a}$. To begin with, Figure~\ref{fig: kkt conditions} presents the KKT conditions~\eqref{kkt: kkt conditions} where the input $\Vec{\vect{a}}$ is sorted. In this figure, we identify $u^{\star}$ and $l^{\star}$ such that the summation of the red-shaded area equals $\min\left\{\T_{(k)}(\vect{a}), r\right\}$ (i.e., Equation~\eqref{kkt: top-k-sum constraint}) and the blue-shaded area is equal to the orange-shaded area (i.e., Equation~\eqref{kkt: same area}). After $(u^{\star},l^{\star})$ is found,  the optimal solution $\vect{x}^{\star}$ is obtained in~\eqref{kkt: solution form}.
This comes up with a question: \textbf{\emph{How to identify $(u^{\star}, l^{\star})$}?}



\begin{figure}[!ht]
    \centering
    \begin{minipage}[c]{0.45\linewidth}
    \centering
    \begin{tikzpicture}[scale = 0.7]
        \draw[thick, -latex] (0.25,0) -- (7,0) node[right] {};
        \draw[thick, -latex] (0.25,0) -- (0.25,6) node[above] {};
        \node[below] at (7,0) {Index};
        \node[above] at (0,6) {Value};
    
        \draw[thick, decorate,decoration={brace,amplitude=10pt,mirror,raise=4pt},yshift=-9pt] (0.5,0) -- (2.5,0) node[midway,yshift=-20pt] {$\alpha$};
    
    
        \draw[thick, decorate,decoration={brace,amplitude=10pt,mirror,raise=4pt},yshift=-9pt] (3,0) -- (6,0) node[midway,yshift=-20pt] {$\gamma$};
    
        \foreach \x in {1,...,4} {
            \draw [dashed] (\x/2, {6.2-\x/2})--(\x/2,0);
            \draw[thick] (\x/2-0.25, {6.2-\x/2}) -- (\x/2+0.25, {6.2-\x/2});
        }
    
        \foreach \x/\y in {5/3.3, 6/2.5, 7/2.2, 8/2, 9/1.8} {
            \draw [dashed] (\x/2, \y)--(\x/2,0);
            \draw[thick] (\x/2-0.25, \y) -- (\x/2+0.25, \y);
        }
    
        \foreach \x in {10,11,12} {
            \draw [dashed] (\x/2, 6.3-\x/2)--(\x/2,0);    
            \draw[thick] (\x/2-0.25, {6.3-\x/2}) -- (\x/2+0.25, {6.3-\x/2});
        }
    
        \fill[pattern=north east lines, pattern color=red] (0.25, 5.7) -- (0.75, 5.7) -- (0.75, 5.2) -- (1.25, 5.2) -- (1.25, 4.7) -- (1.75, 4.7) -- (1.75, 4.2) -- (2.25, 4.2) -- (2.25, 3.3) -- (2.75, 3.3) -- (2.75, 2.5) -- (3.25, 2.5) -- (3.25, 0) -- (0.25, 0);
        \node[below] at (1/2 ,0) {$1$};
        \node[below] at (5/2 ,0) {$k_0$};
        \node[below] at (6/2 ,0) {$k$};
        \node[below] at (12/2 ,0) {$n$};
    
        \begin{scope}[shift={(4.5,5.5)}]
            \draw[thick] (0,0) -- (0.5,0) node[right] {$\Vec{\vect{a}}$ value};
            \draw[thick, red] (0,-0.5) -- (0.5,-0.5) node[right, black] {$\Vec{\vect{x}}^{\star}$ value};
            \fill[pattern=north east lines, pattern color=red] (0, -1.5) rectangle (0.5,-1) node[right,black,yshift=-0.25cm] {$\leq r$~\eqref{kkt: top-k-sum constraint}};
        \end{scope}
    \end{tikzpicture}
    \end{minipage}
    \hfill
    \begin{minipage}[c]{0.45\linewidth}
    \centering
    \begin{tikzpicture}[scale = 0.7]
        \draw[thick, -latex] (0.25,0) -- (7,0) node[right] {};
        \draw[thick, -latex] (0.25,0) -- (0.25,6) node[above] {};
        \node[below] at (7,0) {Index};
        \node[above] at (0,6) {Value};
    
        \draw[thick, decorate,decoration={brace,amplitude=10pt,mirror,raise=4pt},yshift=-9pt] (0.5,0) -- (2,0) node[midway,yshift=-20pt] {$\alpha$};
    
        \draw[thick, decorate,decoration={brace,amplitude=10pt,mirror,raise=4pt},yshift=-9pt] (2.5,0) -- (4.5,0) node[midway,yshift=-20pt] {$\beta$};
    
        \draw[thick, decorate,decoration={brace,amplitude=10pt,mirror,raise=4pt},yshift=-9pt] (5,0) -- (6,0) node[midway,yshift=-20pt] {$\gamma$};
    
        \foreach \x in {1,...,4} {
            \draw[thick, -latex] (\x/2, {6.2-\x/2}) -- (\x/2, 4-\x/2);
            \ifnum \x=4
                \draw[dashed, blue] (\x/2, 4-\x/2)--(\x/2,0);
            \else
                \draw [dashed] (\x/2, 4-\x/2)--(\x/2,0);
            \fi
            \draw[thick, red] (\x/2-0.25, 4-\x/2) -- (\x/2+0.25, 4-\x/2);
            \draw[thick] (\x/2-0.25, {6.2-\x/2}) -- (\x/2+0.25, {6.2-\x/2});
        }
    
        \foreach \x/\y in {5/3.3, 6/2.5, 7/2.2, 8/2, 9/1.8} {
            \draw[thick, -latex] (\x/2, \y) -- (\x/2, 1.5);
            \ifnum \x=6
                \draw[dashed, blue] (\x/2, 1.5)--(\x/2,0);
            \else \ifnum \x=9
                \draw[dashed, blue] (\x/2, 1.5)--(\x/2,0);            
            \else
                \draw [dashed] (\x/2, 1.5)--(\x/2,0);
            \fi
            \fi
            \draw[thick, red] (\x/2-0.25, 1.5) -- (\x/2+0.25, 1.5);
            \draw[thick] (\x/2-0.25, \y) -- (\x/2+0.25, \y);
        }
    
        \foreach \x in {10,11,12} {
            \draw [dashed] (\x/2, 6.3-\x/2)--(\x/2,0);    
            \draw[thick] (\x/2-0.25, {6.3-\x/2}) -- (\x/2+0.25, {6.3-\x/2});
            \draw[thick, red] (\x/2-0.25, {6.3-\x/2}) -- (\x/2+0.25, {6.3-\x/2});
        }
    
        \fill[pattern=north east lines, pattern color=red] (0.25, 5.7) -- (0.75, 5.7) -- (0.75, 5.2) -- (1.25, 5.2) -- (1.25, 4.7) -- (1.75, 4.7) -- (1.75, 4.2) -- (2.25, 4.2) -- (2.25, 3.7) -- (0.25, 3.7);
        \fill[pattern=north east lines, pattern color=red] (0.25,0) rectangle (3.25,1.5);

        \fill[pattern=north west lines, pattern color=blue] (2.25, 3.7) -- (3.25, 3.7) -- (3.25, 2.5) -- (2.75, 2.5) -- (2.75, 3.3) -- (2.25, 3.3);
        \fill[pattern=north west lines, pattern color=orange] (3.25, 2.2) -- (3.75, 2.2) -- (3.75, 2) -- (4.25, 2) -- (4.25, 1.8) -- (4.75, 1.8) -- (4.75, 1.5) -- (3.25, 1.5);

        \node[left] at (0.25, 1.5) {$l^{\star}$};
        \node[left] at (0.25, 3.7) {$u^{\star}$};
        \node[below] at (1/2 ,0) {$1$};
        \node[below] at (4/2 ,0) {$k_0$};
        \node[below] at (6/2 ,0) {$k$};
        \node[below] at (9/2 ,0) {$k_1$};
        \node[below] at (12/2 ,0) {$n$};
    
        \begin{scope}[shift={(4.5,5.5)}]
            \draw[thick] (0,0) -- (0.5,0) node[right] {$\Vec{\vect{a}}$ value};
            \draw[thick, red] (0,-0.5) -- (0.5,-0.5) node[right, black] {$\Vec{\vect{x}}^{\star}$ value};
            \fill[pattern=north east lines, pattern color=red] (0, -1.5) rectangle (0.5,-1) node[right,black,yshift=-0.25cm] {$=r$~\eqref{kkt: top-k-sum constraint}};
            \fill[pattern=north west lines, pattern color=blue] (0, -2.5) rectangle (0.5,-2) node[right,black,yshift=-0.25cm] {$=$};
            \fill[pattern=north west lines, pattern color=orange] (1.2, -2.5) rectangle (1.7,-2) node[right,black,yshift=-0.25cm] {\eqref{kkt: same area}};
        \end{scope}
    \end{tikzpicture}
    \end{minipage}
    \caption{Illustration of the KKT conditions~\eqref{kkt: kkt conditions}: Schematic of sorted input $\Vec{\vect{a}}$ and sorted projection $\Vec{\vect{x}}^{\star}$, where $k_0=|\alpha|$ and $k_1=|\alpha|+|\beta|$. \textbf{(Left)}: $\T_{(k)}(\vect{a}) \leq r$. \textbf{(Right)}: $\T_{(k)}(\vect{a}) > r$.}
    \label{fig: kkt conditions}
\end{figure}
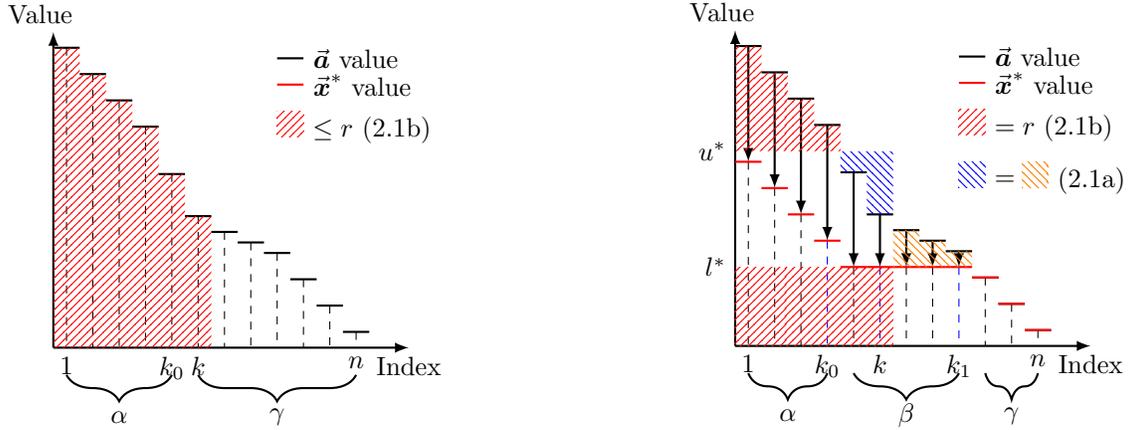


Let's analyze the two KKT conditions~\eqref{kkt: top-k-sum constraint} and \eqref{kkt: same area}. First, we define two equations
\begin{align}
    \label{equ: increasing function}  f(u,l) :=& \sum_{i=1}^n \max\{a_i - u, 0\} + lk - \min\left\{\T_{(k)}(\vect{a}), r\right\}, \\
    \label{equ: decreasing function}  g(u,l) := &\sum_{i=1}^{k}\max\{u -\Vec{a}_i,0\} - \sum_{j=k+1}^n \max\{\Vec{a}_j-l, 0\},
\end{align}
Solving the KKT conditions~\eqref{kkt: kkt conditions} becomes finding $(u^{\star},l^{\star})$ such that $f(u^{\star},l^{\star})=g(u^{\star},l^{\star})=0$ and $l^{\star}\leq \Vec{a}_k\leq u^{\star}$. 

Let
\begin{align}
F(u) = \frac{1}{k} \left(\min\left\{\T_{(k)}(\vect{a}), r\right\} - \sum_{i=1}^n \max\{a_i - u, 0\}\right)
\end{align}
and we have 
$$f(u,F(u))=0.$$

Given $u =\vec{a}_k$, we have $\sum_{i=1}^{k}\max\{u -\Vec{a}_i,0\}=0$. In this case, we have $g(u,l)=0$ for $l\in [\vec{a}_{k+1},\Vec{a}_k]$. We let $G(\vec{a}_k)=[\vec{a}_{k+1},\vec{a}_k]$. Otherwise, if $u>\vec{a}_k$, we have $\sum_{i=1}^{k}\max\{u -\Vec{a}_i,0\}>0$, and there is an unique $l$ such that $g(u,l)=0$. For $u > \vec{a}_k$, we define 
\begin{align}
    &\quad G(u):= \{l: g(u, l)=0\}. 
\end{align}

We then provide the properties of $F(u)$ and $G(u)$. 
\begin{lemma}
\label{thm: f property}
    $F(u)$ is piecewise linear, nondecreasing, and concave. Furthermore, $F(u) \leq u$.
\end{lemma}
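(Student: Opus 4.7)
The plan is to decompose $F(u)$ into its constant part and its $u$-dependent part, handle the structural claims (piecewise linearity, monotonicity, concavity) by invoking standard closure properties, and then prove $F(u)\leq u$ by a short chain of inequalities.

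First I would isolate the term that depends on $u$, namely $h(u):=\sum_{i=1}^n \max\{a_i-u,0\}$, and note that each summand $u\mapsto \max\{a_i-u,0\}$ is piecewise linear (with a single breakpoint at $a_i$), nonincreasing, and convex. Since these properties are preserved under finite sums, $h$ is piecewise linear, nonincreasing, and convex on $\R$, with breakpoints contained in $\{a_1,\dots,a_n\}$. Writing
\begin{equation*}
F(u)=\frac{1}{k}\Bigl(\min\{\T_{(k)}(\vect{a}),r\}-h(u)\Bigr),
\end{equation*}
the quantity $\min\{\T_{(k)}(\vect{a}),r\}$ is a constant in $u$, and $-h$ is piecewise linear, nondecreasing, and concave. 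Since $k>0$, dividing by $k$ and adding a constant preserve all three properties, giving the first claim.

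For $F(u)\leq u$, the strategy is to bound $h(u)$ from below using only the top $k$ terms of $\vect{a}$. I would use the chain
\begin{equation*}
\min\{\T_{(k)}(\vect{a}),r\}\leq \T_{(k)}(\vect{a})=\sum_{i=1}^k \vec{a}_i \leq \sum_{i=1}^k \max\{\vec{a}_i-u,0\}+ku \leq h(u)+ku,
\end{equation*}
where the second inequality drops the floor $0$ (so $\vec{a}_i-u\leq \max\{\vec{a}_i-u,0\}$) and the third uses that summing the truncated positive parts over all $n$ indices is at least the partial sum over the top $k$. Rearranging yields $\min\{\T_{(k)}(\vect{a}),r\}-h(u)\leq ku$, and dividing by $k$ gives $F(u)\leq u$.

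There is no genuinely hard step here; the one place to be careful is just bookkeeping the two sources of slack (the $\min$ with $r$, and the fact that some $\max\{a_i-u,0\}$ terms for $i>k$ may be strictly positive), both of which only help the inequality. The argument applies for every $u\in\R$, including the regime $u<\vec{a}_k$ relevant later for characterizing where $F$ and $G$ intersect.
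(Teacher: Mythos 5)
Your proof is correct. The structural part (piecewise linearity, monotonicity, concavity) matches the paper's argument, which likewise views $F$ as a constant minus a nonnegative combination of convex piecewise-linear pieces. Where you diverge is the inequality $F(u)\leq u$: the paper shows that $F(u)-u$ is nondecreasing for $u\leq \vec{a}_k$ and nonincreasing for $u\geq \vec{a}_k$, so that its global maximum is attained at $u=\vec{a}_k$ and equals $\frac{1}{k}\min\{r-\T_{(k)}(\vect{a}),0\}\leq 0$; you instead prove the bound pointwise by the direct chain
\begin{equation*}
\min\{\T_{(k)}(\vect{a}),r\}\leq \sum_{i=1}^k \vec{a}_i \leq \sum_{i=1}^k \max\{\vec{a}_i-u,0\}+ku \leq \sum_{i=1}^n \max\{a_i-u,0\}+ku,
\end{equation*}
which is valid for every $u$ and avoids having to verify the one-sided monotonicity of $F(u)-u$. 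Your route is the more elementary and self-contained of the two; the paper's route buys slightly more, since identifying $u=\vec{a}_k$ as the maximizer and computing $F(\vec{a}_k)-\vec{a}_k$ exactly tells you precisely when $F(u)=u$ is attainable (namely when $r\geq \T_{(k)}(\vect{a})$), a fact the authors lean on in the subsequent case analysis. Either argument fully establishes the lemma as stated.
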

\begin{proof}
It is straightforward to observe that $F(u)$ is nondecreasing. Additionally, since $F(u)$ is expressed as the negative sum of the pointwise maximum of affine functions, it follows that $F(u)$ is piecewise linear and concave. 

Moreover, we can verify that $F(u) - u$ is nondecreasing for $u \leq \vec{a}_k$ and nonincreasing for $u \geq \vec{a}_k$. As a result, we have 
\[
F(u) - u \leq F(\vec{a}_k) - \vec{a}_k =\frac{1}{k}\min\left\{r-\T_{(k)}(\vect{a}),0\right\}\leq 0, \quad \text{for all } u.
\]
The lemma is proved.
\end{proof}

\begin{lemma}
\label{thm: g property}
    $G(\vec{a}_k) = [\vec{a}_{k+1}, \vec{a}_k]$ and $G(u)$ is piecewise linear, decreasing, and continuous for $u > \vec{a}_k$. 
\end{lemma}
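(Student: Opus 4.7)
My plan is to split the two assertions in the statement. The boundary identity $G(\vec{a}_k) = [\vec{a}_{k+1}, \vec{a}_k]$ will follow by direct substitution. The structural claim for $u > \vec{a}_k$ I will prove by exploiting the separability $g(u,l) = h_1(u) - h_2(l)$, which lets me realize $G$ as the composition of two one-variable piecewise linear functions moving in opposite directions.

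For $u = \vec{a}_k$: every index $i \leq k$ satisfies $\vec{a}_i \geq u$, so the first sum in $g$ vanishes and the equation $g(\vec{a}_k,l) = 0$ reduces to $\sum_{j=k+1}^n \max\{\vec{a}_j - l, 0\} = 0$, which is equivalent to $l \geq \vec{a}_{k+1}$. Intersecting with the convention $l \leq u = \vec{a}_k$ (carried over from the KKT bound $l^* \leq \vec{a}_k$) yields the interval $[\vec{a}_{k+1}, \vec{a}_k]$.

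For $u > \vec{a}_k$: introduce
\begin{align*}
h_1(u) := \sum_{i=1}^k \max\{u-\vec{a}_i,0\}, \qquad h_2(l) := \sum_{j=k+1}^n \max\{\vec{a}_j-l,0\},
\end{align*}
so that $g(u,l) = h_1(u) - h_2(l)$. Both are continuous and piecewise linear. Because the index $i = k$ always contributes to the slope of $h_1$ when $u > \vec{a}_k$, the map $h_1$ is strictly increasing on $(\vec{a}_k,\infty)$ and strictly positive there; symmetrically, the index $j = k+1$ always contributes to the slope of $h_2$ when $l < \vec{a}_{k+1}$, so $h_2$ restricted to $(-\infty,\vec{a}_{k+1}]$ is a continuous piecewise linear strictly decreasing bijection onto $[0,\infty)$. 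Consequently $h_2^{-1} : [0,\infty) \to (-\infty,\vec{a}_{k+1}]$ is also piecewise linear, continuous, and strictly decreasing, and the unique root of $g(u,l) = 0$ is $G(u) = h_2^{-1}(h_1(u))$. As the composition of a piecewise linear continuous strictly increasing function with a piecewise linear continuous strictly decreasing one, $G$ is piecewise linear, continuous, and strictly decreasing on $(\vec{a}_k,\infty)$, as required.

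The one point that needs care, and where I expect to focus, is ensuring that ``piecewise linear'' and ``strictly decreasing'' are compatible: both rest on the slopes of $h_1$ and $h_2$ being bounded away from zero in the relevant regions, which is what rules out flat segments in $G$ and certifies that $h_2^{-1}$ is a genuine function rather than a correspondence. This quantitative lower bound of $1$ on the slopes comes from the trivial index count above, and it is the crucial ingredient that upgrades non-strict monotonicity of $h_1$ and $h_2$ to the strict statement required of $G$.
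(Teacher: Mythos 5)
Your proof is correct and takes essentially the same route as the paper's: both arguments rest on the monotonicity of the two sums $\sum_{i\leq k}\max\{u-\vec{a}_i,0\}$ and $\sum_{j> k}\max\{\vec{a}_j-l,0\}$, with an increase in the former forcing an increase in the latter and hence a decrease in $l$. Your version is in fact more rigorous than the paper's terse paragraph, since the explicit factorization $G=h_2^{-1}\circ h_1$ together with the slope bound of $1$ supplied by the indices $i=k$ and $j=k+1$ makes the piecewise linearity, continuity, and strict decrease of $G$ airtight, whereas the paper simply infers these from the corresponding properties of $g$.
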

\begin{proof}
    It is straightforward to observe that $G(u)$ is continuous and piecewise linear because the function $g(u,l)$ is both continuous and piecewise linear. For $u > \vec{a}_k$, as $u$ increases, the term $\sum_{i=1}^{k} \max\{u - \vec{a}_i, 0\}$ also increases. To ensure that $g(u, l) = 0$, the term $\sum_{j=k+1}^{n} \max\{\vec{a}_j - l, 0\}$ must increase, which necessitates that $l \leq \vec{a}_{k+1}$ and the function $G(u)$ decreases.
\end{proof}    

Figure~\ref{fig: gu and fu} summarizes Lemmas~\ref{thm: f property}-\ref{thm: g property} and illustrates the monotonicities of $F(u)$ and $G(u)$. Both figures use the same input $\vect{a}$ and $k$, ensuring that $G(u)$ remains unchanged, while the only difference between the two figures lies in the $r$ values.

In the left panel, where $r \geq \T_{(k)}(\vect{a})$, we have $F(\vec{a}_k) = \vec{a}_k$ and $G(\vec{a}_k) = [\vec{a}_{k+1}, \vec{a}_k]$. In this case, the black point with $l^{\star} = u^{\star}$ satisfies the KKT conditions~\eqref{kkt: kkt conditions}. Note that the function $F(u)$ does not change with $r$ if $r\geq \T_{(k)}(\vect{a})$.

In the right panel, where $r < \T_{(k)}(\vect{a})$, we have $F(u) < u$ for all $u$, which implies that the solution satisfies $l^{\star} < u^{\star}$. Although the figure shows $u^{\star} > \vec{a}_k$, it is possible for the solution to satisfy $l^{\star} < u^{\star} = \vec{a}_k$.


\begin{figure}[!ht]
\begin{minipage}[c]{0.45\linewidth}
    \centering
    \begin{tikzpicture}[scale=0.65]
        \draw[thick, -latex] (-1,0) -- (6.5,0) node[right] {$u$};
        \draw[thick, -latex] (0,-1) -- (0,4.8) node[above] {$l$};
        
        \draw[dashed] (-0.5, -0.5) -- (4.8, 4.8) node[above] {$l = u$};
    
        \def\xvalues{{0.8, 1.3, 1.8, 2, 2.2, 2.5, 3.3, 4.2, 4.7, 5.2, 5.7, 6.0}}
        
        \xdef\ystart{0.1}
        \foreach \i in {0,1,...,10} {
            \pgfmathsetmacro{\value}{\ystart + (11-(\i+1))/6*(\xvalues[\i+1] - \xvalues[\i])}
            \draw[very thick, blue] (\xvalues[\i], \ystart) -- (\xvalues[\i+1], \value);
            \xdef\ystart{\value}
            \ifnum \i=0
                \draw[dashed] (\xvalues[\i], 0.1) -- (\xvalues[\i],0) node[below] {\footnotesize $\Vec{a}_n$};
            \fi
            \ifnum \i=9
                \draw[dashed] (\xvalues[\i+1], \ystart) -- (\xvalues[\i+1], 0) node[below] {\footnotesize $\Vec{a}_1$};
                \draw[dashed] (\xvalues[\i+1], \value) -- (0, \value) node[left] {$r/k$};
            \fi
            \ifnum \i=10
                \node[blue, right] at (\xvalues[\i+1], \ystart) {$F(u)$};
            \fi
    
        }

        \draw[very thick, blue] (0.3, -1) -- (0.8, 0.1);
    
        \draw[very thick, red] (2.5, 2.5) -- (2.5,2.2) -- (3,2) -- (3.3, 1.9) -- (4, 1.2) -- (4.2, 1.1) -- (4.7, 0.5) -- (5.2, -0.4) -- (5.7, -0.9) node[below] {$G(u)$};
        \draw[dashed] (2.5,2.2) -- (2.5,0) node[below] {\footnotesize $u^{\star}$};    
        \draw[dashed] (2.5,2.5) -- (0,2.5) node[left] {$l^{\star}$};
        \fill (2.5,2.5) circle (4pt);

    \end{tikzpicture}
\end{minipage}
\hfill
\begin{minipage}[c]{0.45\linewidth}
    \centering
    \begin{tikzpicture}[scale=0.65]
        \draw[thick, -latex] (-1,0) -- (6.5,0) node[right] {$u$};
        \draw[thick, -latex] (0,-1) -- (0,4.8) node[above] {$l$};
        
        \draw[dashed] (-0.5, -0.5) -- (4.8, 4.8) node[above] {$l = u$};
    
        \def\xvalues{{0.8, 1.3, 1.8, 2, 2.2, 2.5, 3.3, 4.2, 4.7, 5.2, 5.7, 6.0}}
        
        \xdef\ystart{-0.5}
        \foreach \i in {0,1,...,10} {
            \pgfmathsetmacro{\value}{\ystart + (11-(\i+1))/6*(\xvalues[\i+1] - \xvalues[\i])}
            \draw[very thick, blue] (\xvalues[\i], \ystart) -- (\xvalues[\i+1], \value);
            \xdef\ystart{\value}
            \ifnum \i=0
                \draw[dashed] (0.8,-0.5) -- (0.8,0) node[anchor=north east, xshift=5pt] {\footnotesize $\Vec{a}_n$};
            \fi
            \ifnum \i=9
                \draw[dashed] (\xvalues[\i+1], \ystart) -- (\xvalues[\i+1], 0) node[below] {\footnotesize $\Vec{a}_1$};
                \draw[dashed] (\xvalues[\i+1], \value) -- (0, \value) node[left] {$r/k$};
            \fi
            \ifnum \i=5
                \draw[dashed] (\xvalues[\i+1], \value) -- (\xvalues[\i+1], 0) node[below,xshift=3pt] {\footnotesize $\Vec{a}_{k_0}$};
            \fi
            \ifnum \i=10
                \node[blue, right] at (\xvalues[\i+1], \ystart) {$F(u)$};
            \fi
    
        }
        \draw[very thick, blue] (0.4, -1.3) -- (0.8, -0.5);
    
        \draw[very thick, red] (2.5, 2.5) -- (2.5,2.2) -- (3,2) -- (3.3, 1.9) -- (4, 1.2) -- (4.2, 1.1) -- (4.7, 0.5) -- (5.2, -0.4) -- (5.7, -0.9) node[below] {$G(u)$};
        \draw[dashed] (2.5,2.2) -- (2.5,0) node[below, xshift=-3pt] {\footnotesize $\Vec{a}_{k}$};        
              
        \fill (2.75,2.1) circle (4pt);
        \draw[dashed] (2.75,2.1) -- (2.75,0) node[below,xshift=3pt] {\footnotesize $u^{\star}$};
        \draw[dashed] (2.75,2.1) -- (0,2.1) node[left] {$l^{\star}$};
    \end{tikzpicture}
\end{minipage}
\caption{The illustrations of $F(u)$ and $G(u)$ for different $r$ values, where $\vect{a}$ (with $\vec{a}_k > \vec{a}_{k+1}$) and $k$ are fixed. The function $G(u)$ remains unchanged, while $F(u)$ shifts vertically based on the value of $r$. {\bf (Left)}: $r \geq \T_{(k)}(\vect{a})$, the black point is on the line $l=u$. {\bf (Right)}: $r < \T_{(k)}(\vect{a})$.
}
\label{fig: gu and fu}
\end{figure}
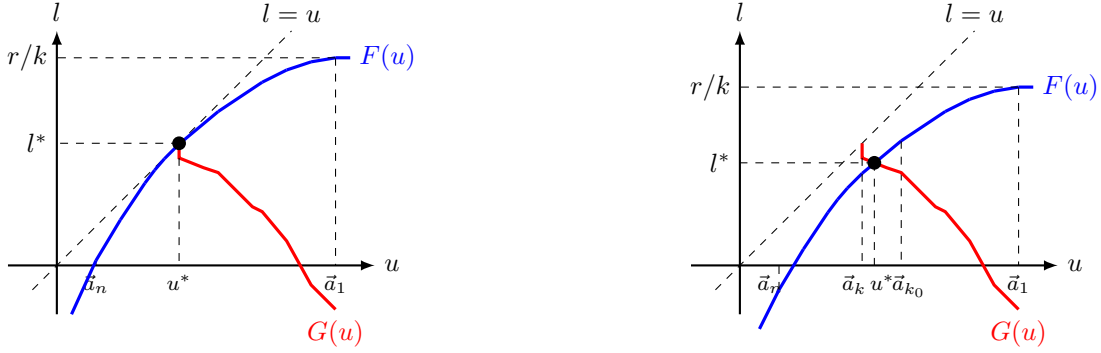

\subsection{Relaxed KKT conditions}
\label{subsec: relaxation}
It is worthwhile to point out that both the KKT conditions~\eqref{kkt: same area} and~\eqref{kkt: top-k-sum constraint} require partial sorting to compute $\T_{(k)}(\vect{a})$ or $\vec{a}_k$, which can be computationally expensive, especially when $k$ is large. To address this issue and avoid the sorting requirement, we introduce relaxed KKT conditions and demonstrate their relationship with the original KKT conditions.

To begin, we redefine $F(u)$ by removing the dependence on $\T_{(k)}(\vect{a})$ and introduce the function:
\begin{align}
    \label{equ: f' increasing function} 
    F_r(u) := \frac{1}{k} \left(r - \sum_{i=1}^n \max\{a_i - u, 0\}\right) = F(u) + \frac{1}{k}\max\left\{r - \T_{(k)}(\vect{a}), 0\right\}\geq F(u).
\end{align}
We observe the following:
\begin{itemize}
    \item If $r < \T_{(k)}(\vect{a})$, then $F_r(u) = F(u)<u$, maintaining equivalence with the original definition.
    \item If $r=\T_{(k)}(\vect{a})$, we have $F_r(u)=F(u)=u$ for $u\in [\vec{a}_{k+1},\vec{a}_k]$ and $F_r(u)=F(u)< u$ otherwise.
    \item If $r > \T_{(k)}(\vect{a})$, we have $F_r(u)>F(u)=u$ for $u\in [\vec{a}_{k+1},\vec{a}_k]$. That is, there exists at least one $u$ such that $F_r(u)>u$.
\end{itemize}

\begin{lemma}
\label{thm: fr property}
    $F_r(u)$ is piecewise linear, nondecreasing, and concave. In addition, we have its one-sided derivatives as
    \[F_r'(u^-)= \frac{|\I\left(\vect{a}, u,+\right)|}{k},\quad F_r'(u^+)= \frac{|\I\left(\vect{a}, u, +\right)|-|\{i: a_i=u\}|}{k}.\]
\end{lemma}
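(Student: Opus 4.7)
The plan is to prove the structural properties (piecewise linear, nondecreasing, concave) in exactly the same way as Lemma~\ref{thm: f property}, and then to compute the one-sided derivatives by differentiating the sum term-by-term, being careful about the non-smooth point where $a_i = u$.

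First I would write $F_r(u) = \frac{r}{k} - \frac{1}{k}\sum_{i=1}^n \max\{a_i - u, 0\}$. For each fixed $i$, the function $u \mapsto \max\{a_i - u, 0\}$ is the pointwise maximum of the two affine functions $a_i - u$ and $0$, hence is convex, piecewise linear, and nonincreasing on $\R$. Therefore $-\max\{a_i - u, 0\}$ is concave, piecewise linear, and nondecreasing, and so is the sum over $i$, plus the additive constant $r/k$. This establishes the three structural properties of $F_r(u)$.

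For the one-sided derivatives, I would work term-by-term. Fix $u \in \R$ and consider a single term $h_i(u) := \max\{a_i - u, 0\}$:
\begin{itemize}
\item If $a_i > u$, then on a neighborhood of $u$ we have $h_i = a_i - u$, so $h_i'(u^-) = h_i'(u^+) = -1$.
\item If $a_i < u$, then on a neighborhood of $u$ we have $h_i \equiv 0$, so $h_i'(u^-) = h_i'(u^+) = 0$.
\item If $a_i = u$, then $h_i(u - \varepsilon) = \varepsilon$ gives $h_i'(u^-) = -1$, while $h_i(u + \varepsilon) = 0$ gives $h_i'(u^+) = 0$.
\end{itemize}
Summing over $i$ and multiplying by $-1/k$, the left derivative $F_r'(u^-)$ collects a contribution of $+1/k$ from every $i$ with $a_i \geq u$ (i.e., $i \in \I(\vect{a}, u, +)$), giving $F_r'(u^-) = |\I(\vect{a}, u, +)|/k$. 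Similarly, the right derivative $F_r'(u^+)$ collects $+1/k$ only from those $i$ with $a_i > u$, i.e., from $\I(\vect{a}, u, +)$ after removing the indices with $a_i = u$, yielding $F_r'(u^+) = (|\I(\vect{a}, u, +)| - |\{i: a_i = u\}|)/k$.

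No step here is a serious obstacle; the only care needed is at the kink points $u = a_i$, where the asymmetry between the left and right derivatives arises and produces the correction term $-|\{i: a_i = u\}|$ in $F_r'(u^+)$. I would state this case analysis explicitly to make the proof self-contained.
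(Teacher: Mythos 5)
Your proof is correct and takes essentially the same route as the paper: the paper derives the structural properties by noting that $F_r(u)-F(u)$ is a constant and invoking Lemma~\ref{thm: f property} (whose proof is exactly your ``negative sum of pointwise maxima of affine functions'' argument), and it leaves the one-sided derivatives as following ``from the definition,'' which is precisely the term-by-term case analysis you spell out. Your version is simply more self-contained, and the kink-point computation at $a_i = u$ correctly produces the $-|\{i: a_i = u\}|$ correction in $F_r'(u^+)$.
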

\begin{proof}
    For given $\vect{a}$ and $r$, the difference $F_r(u)-F(u)$ is fixed. So $F_r(u)$ is also piecewise linear, nondecreasing, and concave. The one-sided derivatives can be obtained from the definition of $F_r(u)$.
\end{proof}

Second, we eliminate the sorting requirement in the definition of $g(u,l)$ and instead define the relaxed function $g_r(u,l)$ as follows:
\begin{align}
    g_r(u,l) := \sum_{i=1}^{n}\max\{a_i - u, 0\} - \sum_{i=1}^{n}\max\{a_i - l, 0\} + k(u - l).
\end{align}

It is straightforward to verify that $g_r(u,l) = g(u,l)$ for $(u, l) \in (\vec{a}_{k+1}, +\infty) \times (-\infty, \vec{a}_k]$. Furthermore, we have $g_r(u,l) = g(u,l) = 0$ for $(u, l) \in (\vec{a}_{k+1}, \vec{a}_k] \times (\vec{a}_{k+1}, \vec{a}_k]$. 
In the subsequent discussion, we restrict our attention to the region $u > \vec{a}_{k+1}$.

For a given $u \in (\vec{a}_{k+1}, \vec{a}_k]$, there may be multiple values of $l$ such that $g_r(u,l) = 0$. To preserve the nonincreasing property, we select the smallest such $l$. Based on this, we define the relaxed function as follows:
\begin{align}
    G_r(u) := \min \{l : g_r(u, l) = 0\}.
\end{align}
It is worth noting that $G_r(u) = \vec{a}_{k+1}$ for all $u\in(\vec{a}_{k+1},\vec{a}_k]$, and $G_r(u) < u$ for $u > \vec{a}_{k+1}$.




Therefore, $F_r(u)$ and $G_r(u)$ are unique for a given $u$, with $G_r(u)$ defined over $u > \vec{a}_{k+1}$ and always lying below line $u = l$. However, $F_r(u)$ may have points above the line $u = l$. The main objective is to determine whether an intersection between $F_r(u)$ and $G_r(u)$ exists, and if so, to find the intersection, i.e., solve $F_r(u) = G_r(u)$.
Once such an intersection exists and is successfully identified, we can obtain $(u^{\star}, l^{\star})$ that satisfies the KKT conditions~\eqref{kkt: kkt conditions}. 


    
    


To formalize above mapping, we summarize the relationship between the intersection point and the KKT solution in the following Lemma.

\begin{lemma}
\label{lemma: solution recovery}
Suppose \(u_r^{\star}\) is a solution of $F_r(u)=G_r(u)$ if it exists. Let $l_r^{\star}:=F_r(u_r^{\star})$, $m_r:=|\I(\vect{a},u_r^{\star},+)|$. Then $m_r \leq k$ and the solution \((u^{\star},l^{\star})\) to the KKT conditions~\eqref{kkt: kkt conditions} can be recovered as follows.

\begin{enumerate}[label=(Case \arabic*)]
    \item \label{case1} \textbf{(Inactive Case)} If \(u_r^{\star}\) does not exist, then \(r \geq \T_{(k)}(\vect{a})\), and the optimal solution is $\vect{x}^{\star}=\vect{a}$. In this case, $u^{\star}=l^{\star}$. 

    \item \label{case2} \textbf{(Interior Case)} If \(F_r(u_r^{\star})<u_r^{\star}\) and \(m_r<k\), then \(u_r^{\star}>\vec{a}_k\). In this case, 
    $u^{\star}=u_r^{\star}$, $l^{\star}=F(u^{\star})=F_r(u^{\star})$. Moreover, $u^{\star}-l^{\star}>\vec{a}_k-\vec{a}_{k+1}$.

    \item \label{case3} \textbf{(Boundary Case)} If \(F_r(u_r^{\star})<u_r^{\star}\) and \(m_r=k\), then \(\vec{a}_{k+1}<\vec{a}_k\) and \(u_r^{\star} \in (\vec{a}_{k+1}, \vec{a}_k]\). In this case, 
    $u^{\star}=\vec{a}_k=\min\{a_i: a_i\ge u_r^{\star}\}$, $l^{\star}=F(u^{\star})$. Moreover, $0<u^{\star}-l^{\star}\leq \vec{a}_k-\vec{a}_{k+1}$.
\end{enumerate}
\end{lemma}

\begin{proof}
Given that $G_r(u)$ is defined for $u>\Vec{a}_{k+1}$, we have $m_r \leq k$ if $u_r^{\star}$ exists. We then prove the three cases separately.

(Case 1) If \(u_r^{\star}\) does not exist, then \(F_r(u)>G_r(u)\) for all admissible \(u\).
By the definition of \(F_r\), this can only happen when \(r \geq \T_{(k)}(\vect{a})\). Therefore, the projection is trivial and \(\vect{x}^{\star}=\vect{a}\), which implies \(u^{\star}=l^{\star}\).

(Case 2) Since \(m_r<k\), we have \(u_r^{\star}>\vec{a}_k\). Hence \(F_r(u)=F(u)\) and \(G_r(u)=G(u)\) at \(u=u_r^{\star}\).
Therefore,
\[
F(u_r^{\star})=F_r(u_r^{\star})=G_r(u_r^{\star})=G(u_r^{\star}),
\]
so \(u^{\star}=u_r^{\star}\) and \(l^{\star}=F(u^{\star})=F_r(u^{\star})\) satisfy the KKT conditions. Since
\(u^{\star}>\vec{a}_k\) and \(l^{\star}\leq \vec{a}_{k+1}\), it follows that $u^{\star}-l^{\star}>\vec{a}_k-\vec{a}_{k+1}$.

(Case 3) Since \(m_r=k\), we have \(F_r(u) = F(u)\) and \(u_r^{\star}\in(\vec{a}_{k+1},\vec{a}_k]\),
which implies \(\vec{a}_{k+1}<\vec{a}_k\). In this case, \(G_r(u_r^{\star})=\vec{a}_{k+1}\), while $G(u)$ is only defined for $u \geq \Vec{a}_k$ in Lemma~\ref{thm: g property}. Since $G(\Vec{a}_k) = [\Vec{a}_{k+1}, \Vec{a}_k]$, the correct KKT value is therefore recovered by taking
\[
u^{\star}=\vec{a}_k=\min\{a_i:a_i\ge u_r^{\star}\},
\qquad
l^{\star}=F(u^{\star}).
\]
Then \((u^{\star},l^{\star})\) satisfies the KKT conditions. Moreover, since
\(u^{\star}=\vec{a}_k\) and \(l^{\star}\ge \vec{a}_{k+1}\), we obtain
$0<u^{\star}-l^{\star}\le \vec{a}_k-\vec{a}_{k+1}$.
\end{proof}

The geometric intuition for these three cases is illustrated in Figure~\ref{fig: Gru and Fru}. 

\begin{figure}[!ht]
\begin{minipage}[c]{0.32\linewidth}
    \centering
    \begin{tikzpicture}[scale=0.5]
        \draw[thick, -latex] (-1,0) -- (6.5,0) node[right] {$u$};
        \draw[thick, -latex] (0,-1) -- (0,4.8) node[above] {$l$};
        \draw[thick, densely dashed, gray, fill=bg!50] (1.8, 2) rectangle (2.8,2.5); 
        \draw[dashed] (-0.5, -0.5) -- (4.8, 4.8) node[above] {$l = u$};
    
        \def\xvalues{{0.8, 1.3, 1.8, 2, 2.2, 2.5, 3.3, 4.2, 4.7, 5.2, 5.7, 6.0}}
        
        \xdef\ystart{0.3}
        \foreach \i in {0,1,...,10} {
            \pgfmathsetmacro{\value}{\ystart + (11-(\i+1))/6*(\xvalues[\i+1] - \xvalues[\i])}
            \draw[very thick, blue] (\xvalues[\i], \ystart) -- (\xvalues[\i+1], \value);
            \xdef\ystart{\value}
            \ifnum \i=0
                \draw[dashed] (\xvalues[\i], 0.3) -- (\xvalues[\i],0) node[below] {\footnotesize $\Vec{a}_n$};
            \fi
            \ifnum \i=9
                \draw[dashed] (\xvalues[\i+1], \ystart) -- (\xvalues[\i+1], 0) node[below] {\footnotesize $\Vec{a}_1$};
                \draw[dashed] (\xvalues[\i+1], \value) -- (0, \value) node[left] {$r/k$};
            \fi
    
        }

        \draw[very thick, blue] (0.8, 0.3) -- (0.3, -0.8) node[below, blue] {$F_r(u)$};
        
        \draw[thick, ->,>=latex, gray] (2.9, 2.2) -- (3.4, 2.4);
        \begin{scope}[shift={(3.5,2)}]
            \draw[thick, densely dashed, gray, fill=bg!50] (0.0,0.0) rectangle (2,1); 
            \draw[densely dashed] (0.5,0) -- (1.5, 1);
            \draw[very thick, blue] (0.2, 0) --(0.5, 0.5) -- (1, 1);
            \draw[very thick, red] (0.9, 0.4) -- (1.5, 0.4) -- (2, 0.2);
            \draw[densely dashed] (1.5, 0.4) -- (1.5, 0);     
        \end{scope}
        \draw[very thick, red] (2.2, 2.2) -- (2.5,2.2) -- (3,2) -- (3.3, 1.9) -- (4, 1.2) -- (4.2, 1.1) -- (4.7, 0.5) -- (5.2, -0.4) -- (5.7, -0.9) node[below] {$G_r(u)$};
        \draw[dashed] (2.5,2.2) -- (2.5,0) node[below] {\footnotesize $\Vec{a}_{k}$};        
    \end{tikzpicture}
\end{minipage}
\hfill
\begin{minipage}[c]{0.32\linewidth}
    \centering
    \begin{tikzpicture}[scale=0.5]
        \draw[thick, -latex] (-1,0) -- (6.5,0) node[right] {$u$};
        \draw[thick, -latex] (0,-1) -- (0,4.8) node[above] {$l$};
        \draw[thick, densely dashed, gray, fill=bg!50] (2.5, 1.6) rectangle (3,2.6); 
        \draw[dashed] (-0.5, -0.5) -- (4.8, 4.8) node[above] {$l = u$};
    
        \def\xvalues{{0.8, 1.3, 1.8, 2, 2.2, 2.5, 3.3, 4.2, 4.7, 5.2, 5.7, 6.0}}
        
        \xdef\ystart{-0.5}
        \foreach \i in {0,1,...,10} {
            \pgfmathsetmacro{\value}{\ystart + (11-(\i+1))/6*(\xvalues[\i+1] - \xvalues[\i])}
            \draw[very thick, blue] (\xvalues[\i], \ystart) -- (\xvalues[\i+1], \value);
            \xdef\ystart{\value}
            \ifnum \i=0
                \draw[dashed] (0.8,-0.5) -- (0.8,0) node[anchor=north east, xshift=5pt] {\footnotesize $\Vec{a}_n$};
            \fi
            \ifnum \i=9
                \draw[dashed] (\xvalues[\i+1], \ystart) -- (\xvalues[\i+1], 0) node[below] {\footnotesize $\Vec{a}_1$};
                \draw[dashed] (\xvalues[\i+1], \value) -- (0, \value) node[left] {$r/k$};
            \fi
            \ifnum \i=5
                \draw[dashed] (\xvalues[\i+1], \value) -- (\xvalues[\i+1], 0) node[below,xshift=8pt] {\footnotesize $\Vec{a}_{k_0}$};
            \fi
    
        }
        \draw[very thick, blue] (0.8, -0.5) -- (0.4, -1.3) node[below, blue] {$F_r(u)=F(u)$};
    
        \draw[very thick, red] (2.2, 2.2) -- (2.5,2.2) -- (3,2) -- (3.3, 1.9) -- (4, 1.2) -- (4.2, 1.1) -- (4.7, 0.5) -- (5.2, -0.4) -- (5.7, -0.9) node[below] {$G_r(u)$};
        \draw[dashed] (2.5,2.2) -- (2.5,0) node[below, xshift=-5pt] {\footnotesize $\Vec{a}_{k}$};        

        \draw[thick, ->,>=latex, gray] (3.2, 2) -- (4, 2);
        \begin{scope}[shift={(4.3,1.1)}]
            \draw[thick, densely dashed, gray, fill=bg!50] (0.0,0.0) rectangle (1,2); 
            \draw[densely dashed] (0,1.8) -- (0.2, 2);
            \draw[very thick, blue] (0, 0.6) --(1, 1.4);
            \draw[very thick, red] (0, 1.2) -- (1, 0.9);
            \draw[densely dashed] (0.55, 1.05) -- (0.55, 0);     
            \fill (0.55, 1.05) circle (3pt);
        \end{scope}
              
        \fill (2.75,2.1) circle (3pt);
        \draw[dashed] (2.75,2.1) -- (2.75,0) node[below,xshift=3pt] {\footnotesize $u^{\star}$};
        \draw[dashed] (2.75,2.1) -- (0,2.1) node[left] {$l^{\star}$};
    \end{tikzpicture}
\end{minipage}
\hfill
\begin{minipage}[c]{0.32\linewidth}
    \centering
    \begin{tikzpicture}[scale=0.5]
        \draw[thick, -latex] (-1,0) -- (6.5,0) node[right] {$u$};
        \draw[thick, -latex] (0,-1) -- (0,4.8) node[above] {$l$};
        \draw[thick, densely dashed, gray, fill=bg!50] (2.15,1.8) rectangle (2.65,2.8); 
        
        \draw[dashed] (-0.5, -0.5) -- (4.8, 4.8) node[above] {$l = u$};
        \def\xvalues{{0.8, 1.3, 1.8, 2, 2.2, 2.5, 3.3, 4.2, 4.7, 5.2, 5.7, 6.0}}
        
        \xdef\ystart{0}
        \foreach \i in {0,1,...,10} {
            \pgfmathsetmacro{\value}{\ystart + (11-(\i+1))/6*(\xvalues[\i+1] - \xvalues[\i])}
            \draw[very thick, blue] (\xvalues[\i], \ystart) -- (\xvalues[\i+1], \value);
            \xdef\ystart{\value}
            \ifnum \i=0
                \draw[dashed] (\xvalues[\i], 0) -- (\xvalues[\i],0) node[below, xshift=-4pt] {\footnotesize $\Vec{a}_n$};
            \fi
            \ifnum \i=9
                \draw[dashed] (\xvalues[\i+1], \ystart) -- (\xvalues[\i+1], 0) node[below] {\footnotesize $\Vec{a}_1$};
                \draw[dashed] (\xvalues[\i+1], \value) -- (0, \value) node[left] {$r/k$};
            \fi
            \ifnum \i=4
                \draw[dashed] (\xvalues[\i+1], \value) -- (\xvalues[\i+1], 0) node[below,xshift=6pt] {\footnotesize $\Vec{a}_{k}$};
            \fi
    
        }
        \draw[very thick, blue] (0.8, 0) -- (0.4, -0.8) node[below, blue] {$F_r(u)=F(u)$};
    
        \draw[very thick, red] (2.2, 2.2) -- (2.5,2.2) -- (3,2) -- (3.3, 1.9) -- (4, 1.2) -- (4.2, 1.1) -- (4.7, 0.5) -- (5.2, -0.4) -- (5.7, -0.9) node[below] {$G_r(u)$};
        \fill[orange] (2.5,2.37) circle (3pt);

        \draw[thick, ->,>=latex, gray] (3,2.3) -- (4, 2.3);
        \begin{scope}[shift={(4.3,1.1)}]
            \draw[thick, densely dashed, gray, fill=bg!50] (0.0,0.0) rectangle (1,2); 
            \draw[densely dashed] (0,0.8) -- (1, 1.8);
            \draw[very thick, blue] (0, 0.5) -- (0.2, 0.75) -- (0.8, 1.35) -- (1, 1.5);
            \draw[very thick, red] (0.1, 0.9) -- (0.8, 0.9) -- (1, 0.8);
            \draw[densely dashed] (0.8, 1.35) -- (0.8, 0);
            \fill[orange] (0.8, 1.35) circle (3pt);
            \draw[densely dashed] (0.35, 0.9) -- (0.35, 0);        
            \fill (0.35, 0.9) circle (3pt);
        \end{scope}
        
        \fill (2.33,2.2) circle (3pt);
        \draw[dashed] (2.33,2.2) -- (2.33,0) node[below,xshift=-3pt] {\footnotesize $u_r^{\star}$};
        \draw[dashed] (2.33,2.2) -- (0,2.2) node[left] {$l^{\star}_r$};
    \end{tikzpicture}
\end{minipage}
\caption{The illustrations of $F_r(u)$ and $G_r(u)$ with different $r$ values. \textbf{(Left)}: \ref{case1}. \textbf{(Middle)}: \ref{case2}. \textbf{(Right)}: \ref{case3}.}
\label{fig: Gru and Fru}
\end{figure}
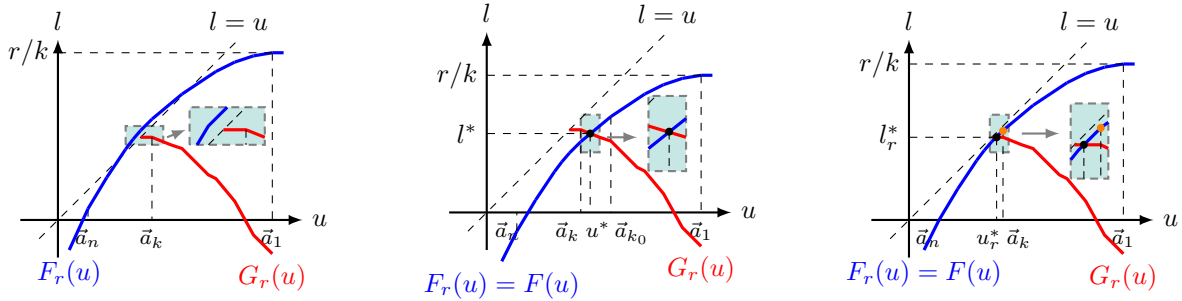

\section{Efficient Intersection Point Searching}
\label{sec: eips}
In this section, we first introduce the computation of $G_r(u)$ in Section~\ref{subsec: solving gu}, a fundamental component of our proposed framework. Subsequently, we present our proposed method, \textbf{E}fficient \textbf{I}ntersection \textbf{P}oint \textbf{S}earching (\textbf{EIPS}) in Section~\ref{subsec: cips implementation}, which is designed to pinpoint the intersection point that characterizes the top-$k$-sum projection solution with two practical accelerating techniques that further reduce the computational effort in Section~\ref{subsec: ACC tricks}. Finally, we show its theoretical guarantee in Section~\ref{subsec: eips analysis}. 

\subsection{Calculating $G_r(u)$}
\label{subsec: solving gu}

The iterative procedure for calculating $G_r(u)$ in Algorithm~\ref{alg: G-Searching} is a variant of the sorting-free projection method developed by Condat~\cite{condat2016fast}. Similarly, we employ a pivot-based partitioning strategy to iteratively update a lower bound $\rho$ that converges to the exact value of $G_r(u)$ without requiring a full sort. 

To implement this strategy, we first reformulate $g_r(u,l)$ into a structure that reveals its dependence on the active elements relative to $u$ and $l$. Specifically, for a given $u \geq l$, $g_r(u,l)$ can be rewritten as follows:

\begin{equation}
\begin{aligned}
    g_r(u,l) & = \sum_{i=1}^{n}\max\{a_i - u, 0\} - \sum_{i=1}^{n}\max\{a_i - l, 0\} + k(u - l) \\
        & = \sum_{j \in \I(\vect{a},u,+)} (a_j - u) - \sum_{i \in \I(\vect{a},u,+) \cup \I(\vect{a}, -, u)} \max\{a_i - l, 0\} + k(u-l) \\
        & = (k-m)(u-l) - \sum_{i \in \I(\vect{a}, l, u)} (a_i - l),
\end{aligned}
\end{equation}
where $m := \left|\I(\vect{a},u,+)\right|$. Since $u > \Vec{a}_{k+1}$, it follows that $m \leq k$.

Consider the case when $m = k$, which means $\Vec{a}_{k+1} < u \leq \Vec{a}_k$, from the definition of $G_r(u)$, we know
\(G_r(u) = \max \{a_i: a_i < u\}=\Vec{a}_{k+1}.\)

Now, let's examine the case when $m < k$, i.e., $u > \Vec{a}_k$. In this case, we know $\left|\I(\vect{a}, G_r(u), u)\right| > \left|\I(\vect{a}, \Vec{a}_{k+1}, u)\right| >  k-m$, $G_r(u) < \Vec{a}_{k+1}$ and $G_r(u)$ is unique, i.e., 
\begin{equation}
    G_r(u) = \frac{\sum_{i \in \I(\vect{a}, G_r(u), u)}a_i - (k-m)u}{|\I(\vect{a}, G_r(u), u)| - (k- m)}.
\end{equation}

In order to find $G_r(u)$, we construct its lower bound and increase the bound. Note that for any sequence $\vect{v}$ such that $\vect{b}_T := \{a_i: i \in \I(\vect{a}, G_r(u), u)\} \subseteq \vect{v} \subseteq \vect{b} := \{a_i : i \in \I(\vect{a}, -, u)\}$, by setting
\begin{equation}
\label{equ: rho calculate}
    \rho = \frac{\sum_{a \in \vect{v}}a - (k-m)u}{|\vect{v}| - (k - m)},
\end{equation}
we have
\begin{equation}
\label{equ: lower bound}
    \rho - G_r(u) = \frac{\sum_{a \in \vect{v}} (a-G_r(u)) - \sum_{a \in \vect{b}_T} (a-G_r(u)) }{|\vect{v}| - (k-m)} \leq 0. 
\end{equation}
Hence, $\rho$ is a lower bound of $G_r(u)$. Consequently, if $a_i \leq \rho$ for $a_i\in\vect{v}$, we can remove $a_i$ from $\vect{v}$ and recalculate $\rho$ using~\eqref{equ: rho calculate}, leading to an increasing to $\rho$. By alternating between the calculation of $\rho$~\eqref{equ: rho calculate} and the update of the sequence $\vect{v}$ by discarding its elements smaller or equal to $\rho$, the algorithm will converge to $G_r(u)$. While the exact number of iterations is inherently data-dependent, governed by the distribution of elements in $\vect{v}$ relative to the rising threshold $\rho$, this process is guaranteed to terminate in at most $|\vect{v}| - (k-m)$ steps, where $|\vect{v}|$ denotes the initial cardinality of the sequence.

However, this introduces another critical consideration: \textbf{how can we guarantee the inclusion of all elements in $\vect{b}_T$?}

To address this issue, note that Inequality~\eqref{equ: lower bound} holds for any subsequence $\vect{v} \subseteq \vect{b}$ with $|\vect{v}| > k-m$. Leveraging this insight, we can first initialize a lower bound $\rho$ of $G_r(u)$ using $\vect{v} \subseteq \vect{b}$ satisfying $|\vect{v}| = k - m + 1$. We then progressively add elements from the remaining elements in $\vect{b}$ to $\vect{v}$ if they have the potential to exceed $G_r(u)$, updating $\rho$ accordingly using~\eqref{equ: rho calculate}. Specifically, when processing $b_j$, if $b_j \leq \rho$, it follows that $b_j \leq G_r(u)$, and therefore, this element can be safely excluded from further consideration. Conversely, if $b_j > \rho$, we incorporate it into $\vect{v}$ and update $\rho$ using~\eqref{equ: rho calculate}, leading to an increasing to $\rho$. We then continue to evaluate the next element $b_{j+1}$ until the whole sequence $\vect{b}$ has been processed. Through this process, all elements that are larger than $G_r(u)$ are collected. 

In conclusion, the procedure used to calculate $G_r(u)$ is summarized as Algorithm~\ref{alg: G-Searching}. The correctness of this algorithm is readily established. After each pass, either at least one element has been removed from $\vect{v}$, or $\vect{v}$ and $\rho$ remain unchanged. In the latter scenario, the elements of $\vect{b}$ which are not present in $\vect{v}$ are smaller than $\rho$ and the elements in $\vect{v}$ are larger than $\rho$. Hence, $\sum_{i \in \vect{v}} \max\{a_i-G_r(u),0\} = \sum_{i\in\vect{b}}\max\{a_i-G_r(u),0\}$, leading to the conclusion that $\rho = G_r(u)$. 

\begin{algorithm}
	\caption{G-Searching} 
    \label{alg: G-Searching}
    \DontPrintSemicolon
    \SetKw{Break}{Break}
    \KwData{
        $u$; 
        $m = \bigl|\I(\vect{a}, u, +)\bigr|$; 
        $H = k - m$; 
        $\vect{b} = \{a_i : i \in \I(\vect{a}, -, u)\}$
    }
    \KwResult{$\rho$}
    \lIf{$H=0$}{\Return $\rho = \max\{a_j: a_j < u\}$}
    Initialize $\vect{v} \gets \{b_1, b_2, \dots, b_{N}\}$ with $N>H$~\quad (e.g., $N=H+1$)\;
    $\rho \gets (\sum_{b \in \vect{v}} b - H u)/(|\vect{v}|-H)$\;
    \For{$n = N+1,...,|\vect{b}|$}{
        \lIf{$b_n > \rho$}{
           append $b_n$ to $\vect{v}$ and set $\rho \gets \rho + \left(b_n - \rho\right)/\left(|\vect{v}| - H\right)$
        }
    }
    \While{$|\vect{v}|$ changes}{
        \ForEach{$b \in \vect{v}$}{
            \lIf{$b \leq \rho$}{
                remove $b$ from $\vect{v}$ and set $\rho \gets \rho + (\rho-b)/(|\vect{v}| - H)$
            }
        }
    }
\end{algorithm}

Note that the complexity of Algorithm~\ref{alg: G-Searching} is strongly affected by the intrinsic ordering of the sequence $\vect{a}$. In the most favorable case, where $\vect{a}$ is sorted in decreasing order, the computational complexity reduces to $O(n)$. In contrast, for an adversarially ordered sequence, the complexity may rise to $O(n^{2})$. To alleviate this issue and improve efficiency, it is advantageous to move larger elements toward the front of $\vect{b}$. Since $F_r(u)$ is easy to compute, we first move all elements greater than or equal to $F_r(u)$ to the front of the sequence $\vect{b}$.

If $|\I(\vect{a}, F_r(u), u)| \le k - m$, then $|\I(\vect{a}, F_r(u), +)| \le k$. This corresponds to the case $F_r(u) > G_r(u)$ and $u > u_r^{\star}$. After the reordering, we select $\vect{v}$ to be the first $k - m + 1$ elements of $\vect{b}$, and the resulting value of $\rho$ computed from $\vect{v}$ is guaranteed to be smaller than $F_r(u)$.

If $|\I(\vect{a}, F_r(u), u)| > k - m$, then the elements moved to the front of $\vect{b}$ coincide with $\vect{v}= \{b_i : i \in \I(\vect{b}, F_r(u), u)\}$. In this case, we initialize $\rho$ as
\begin{equation}
    \rho = \frac{\sum_{i \in \I(\vect{b}, F_r(u), u)}b_i - (k-m)u}{|\I(\vect{b}, F_r(u), u)| - (k-m)},
\end{equation}
which gives a valid initial lower bound.

Moreover, if $\rho \ge F_r(u)$, then $G_r(u) \ge F_r(u)$, and we may skip Steps~4--6 in Algorithm~\ref{alg: G-Searching}.

\subsection{Implementation}
\label{subsec: cips implementation}
The procedure, EIPS, is summarized in Algorithm~\ref{alg: EIPS}, operates in three phases to efficiently identify the intersection point $u_r^{\star}$ that characterizes the top-$k$-sum projection solution. The \textbf{Initialization step} first determines whether $r \geq \T_{(k)}(\vect{a})$; if not, it selects appropriate initial points to initiate the search for $u_r^{\star}$. The \textbf{Pivot step} then carries out an adaptive coarse search, iteratively narrowing the interval that contains $u_r^{\star}$. Finally, the \textbf{Exact step} refines this search to determine the precise value of $u_r^{\star}$.

For simplicity, we denote $D(u) = G_r(u) - F_r(u)$. We define $u_R$ and $u_L$ as the tightest bounds for $u_r^{\star}$ maintained during the iterative process, where $u_R$ denotes the smallest known upper bound such that $u_r^{\star} \leq u_R$, and $u_L$ denotes the largest known lower bound such that $u_r^{\star} \geq u_L$. \\
\\
\textbf{Initialization step} (3-5 in Algorithm~\ref{alg: EIPS}) After excluding the special cases $k=1$ and $k=n$, Algorithm~\ref{alg: initialization} either directly determines whether $u^{\star}-l^{\star}\leq \vec{a}_k-\vec{a}_{k+1}$ (\ref{case1} and \ref{case3}) or $u^{\star}-l^{\star}\geq \vec{a}_1-r/k$ such that the optimal solution can be easily computed, or provides two initial points for $u_r^{\star}$, one of which serves as an upper or lower bound. The update scheme in Algorithm~\ref{alg: initialization} is similar to the root-finding procedure in~\cite[Algorithm 1]{gong2011efficient}.

Note that if there exist a $u$ such that $F_r(u)=u$, then the KKT conditions can be immediately classified as belonging to \ref{case1}. It follows that $r\geq \T_{(k)}(\vect{a})$ and $\vect{x}^{\star}=\vect{a}$ is the optimal solution, as illustrated in the right panel of Figure~\ref{fig: u init update}. The algorithm starts with $u^1=F_r(\vec{a}_1)=r/k$ (implicit $u^0=\vec{a}_1$). If $u^1\geq \vec{a}_1$, we have $F_r(u^1)=u^1$. Conversely, if $u^1< \vec{a}_1$, then $(\vec{a}_1,u^1)$ lies below the line $l=u$, leading to $F_r(u^1)< u^1$ due to the properties of $F_r(u)$. Then, we check the value of $F_r'((u^i)^{-})$.
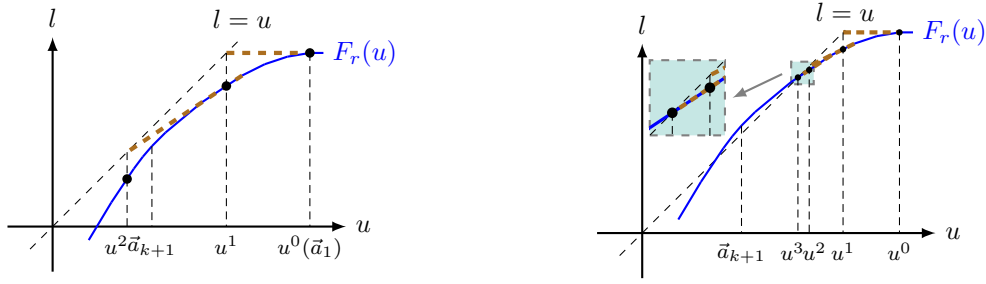
\begin{figure}[!ht]
\centering
\begin{minipage}[c]{0.4\linewidth}
    \centering
    \begin{tikzpicture}[scale=0.6]
        \draw[thick, -latex] (-1,0) -- (6.5,0) node[right] {$u$};
        \draw[thick, -latex] (0,-1) -- (0,4.2) node[above] {$l$};

        \draw[dashed] (-0.5, -0.5) -- (4.2, 4.2) node[above] {$l = u$};
    
        \def\xvalues{{0.8, 1.3, 1.8, 2, 2.2, 2.5, 3.3, 4.2, 4.7, 5.2, 5.7, 6.0}}
        
        \xdef\ystart{-0.3}
        \foreach \i in {0,1,...,10} {
            \pgfmathsetmacro{\value}{\ystart + (11-(\i+1))/6*(\xvalues[\i+1] - \xvalues[\i])}
            \draw[thick, blue] (\xvalues[\i], \ystart) -- (\xvalues[\i+1], \value);
            \ifnum \i=4
                \draw[densely dashed] (\xvalues[\i],\ystart) -- (\xvalues[\i],0) node[below] {\footnotesize $\Vec{a}_{k+1}$}; 
            \fi
            \xdef\ystart{\value}
            \ifnum \i=10
                \node[blue, right] at (\xvalues[\i+1], \ystart) {$F_r(u)$};
            \fi
        }
        \draw[ultra thick, dashed, MyYellow] (3.85, 3.85) -- (5.7, 3.85); 
        \draw[ultra thick, dashed, MyYellow] (4.2, 3.35) -- (1.65, 1.65); 

        \draw[densely dashed] (5.7, 3.85) -- (5.7, 0) node[below] {\footnotesize $u^0$($\vec{a}_1$)}; 
        \draw[densely dashed] (3.85, 3.85) -- (3.85, 0) node[below] {\footnotesize $u^1$}; 
        \draw[densely dashed] (1.65, 1.65) -- (1.65, 0) node[below, xshift=-4pt] {\footnotesize $u^2$}; 
        \fill (5.7, 3.85) circle (3pt);
        \fill (3.85, 3.117) circle (3pt);
        \fill (1.65, 1.055) circle (3pt);

    \end{tikzpicture}
\end{minipage}
\hspace{1cm}
\begin{minipage}[c]{0.4\linewidth}
    \centering
    \begin{tikzpicture}[scale=0.6]
    
      \draw[thick, -latex] (-1,0) -- (6.5,0) node[right] {$u$};
      \draw[thick, -latex] (0,-1) -- (0,4.2) node[above] {$l$};
      
      \draw[gray, dashed, thick, fill=bg!50] (3.30,3.30) rectangle (3.8,3.8);
      \draw[dashed] (-0.5,-0.5) -- (4.5,4.5) node[above] {$l = u$};
      
      \def\xvalues{{0.8,1.3,1.8,2,2.2,2.5,3.3,4.2,4.7,5.2,5.7,6.0}}
          
      \xdef\ystart{0.3}
      \foreach \i in {0,1,...,10} {
          \pgfmathsetmacro{\value}{\ystart + (11-(\i+1))/6*(\xvalues[\i+1]-\xvalues[\i])}
          \draw[thick, blue] (\xvalues[\i], \ystart) -- (\xvalues[\i+1], \value);
          \ifnum \i=4
              \draw[densely dashed] (\xvalues[\i],\ystart) -- (\xvalues[\i],0) node[below] {\footnotesize $\Vec{a}_{k+1}$}; 
          \fi
          \xdef\ystart{\value}
          \ifnum \i=10
              \node[blue, right] at (\xvalues[\i+1], \ystart) {$F_r(u)$};
          \fi  
      }
    
      \draw[ultra thick, densely dashed, MyYellow] (4.45, 4.45) -- (5.7, 4.45); 
      \draw[ultra thick, densely dashed, MyYellow] (4.7,4.2) -- (3.7,3.7); 
      \draw[ultra thick, densely dashed, MyYellow] (4.2, 3.95) -- (3.45, 3.45);
    
      \draw[densely dashed] (5.7, 4.45) -- (5.7, 0) node[below] {\footnotesize $u^0$}; 
      \draw[densely dashed] (4.45, 4.45) -- (4.45, 0) node[below] {\footnotesize $u^1$}; 
      \draw[densely dashed] (3.7, 3.7) -- (3.7, 0) node[below, xshift=2pt] {\footnotesize $u^2$}; 
      \draw[densely dashed] (3.45, 3.45) -- (3.45, 0) node[below, xshift=-2pt] {\footnotesize $u^3$};
    
      \fill (5.7, 4.45)   circle (2pt);
      \fill (4.45, 4.075) circle (2pt);
      \fill (3.7, 3.617) circle (2pt);
      \fill (3.45, 3.45)  circle (2pt);
    
    
      \coordinate (rectLL) at (3.30,3.30);  
      \coordinate (rectUR) at (3.75,3.75);  
    
      \coordinate (mainLeftMid) at (3.30-0.2, {3.30 + (3.75-3.30)/2});
    
      \node[draw=gray, dashed, thick, inner sep=0pt, fill=bg!50] (inset) at (1,3) {
        \begin{tikzpicture}[xscale=2.0, yscale=2]
          \clip (3.30,3.30) rectangle (3.8,3.8);
          
          \draw[thin, -latex] (-1,0) -- (6.5,0) node[right] {$u$};
          \draw[thin, -latex] (0,-1) -- (0,4.2) node[above] {$l$};
    
          \draw[dashed] (-0.5,-0.5) -- (4.5,4.5);
    
          \def\xvalues{{0.8,1.3,1.8,2,2.2,2.5,3.3,4.2,4.7,5.2,5.7,6.0}}
          \xdef\ystart{0.3}
          \foreach \i in {0,1,...,10} {
              \pgfmathsetmacro{\value}{\ystart + (11-(\i+1))/6*(\xvalues[\i+1]-\xvalues[\i])}
              \draw[solid, very thick, blue] (\xvalues[\i],\ystart) -- (\xvalues[\i+1],\value);
              \xdef\ystart{\value}
          }
    
          \draw[ultra thick, densely dashed, MyYellow] (4.7,4.2) -- (3.7,3.7); 
          \draw[ultra thick, densely dashed, MyYellow] (4.2, 3.95) -- (3.45, 3.45);

          \draw[densely dashed] (3.7, 3.7) -- (3.7, 0);
          \draw[densely dashed] (3.45,3.45) -- (3.45,0);    
          \node[anchor=center, inner sep=0pt, minimum size=4pt, circle, fill=black, transform shape=false]
                at (3.7, 3.617) {};
          \node[anchor=center, inner sep=0pt, minimum size=4pt, circle, fill=black, transform shape=false]
                at (3.45,3.45) {};
    
    
        \end{tikzpicture}
      };
    
      \draw[thick, ->,>=latex, gray] (mainLeftMid) -- (2,3);
    
    \end{tikzpicture}
\end{minipage}
\caption{Illustration of the generation of $u^{i}$ (black point) \textbf{(Left)}: the generating $u^3 \geq \Vec{a}_{k+1}$, so we break the for loop and $r < \T_{(k)}(\vect{a})$. \textbf{(Right)}: $F_r(u^3) = u^3$, so $r \geq \T_{(k)}(\vect{a})$. }
\label{fig: u init update}
\end{figure}
\begin{itemize}
    \item If $F_r'((u^i)^{-})<1$, i.e., $u^i > \vec{a}_k$, we compute the intersection of the linear approximation $F_r(u^i)+F_r'((u^i)^{-})(u-u^i)$ with the line $l=u$, which is 
    \[u^{i+1}=\frac{F_r(u^i)-F_r'((u^i)^{-})u^i}{1-F_r'((u^i)^{-})}=\frac{kF_r(u^i)-u^i |\I(\vect{a},u^i,+)| }{k-|\I(\vect{a},u^i,+)| }\leq u^i, \]
    as shown in Figure~\ref{fig: u init update}. It also yields $F_r(u^{i+1})\leq u^{i+1}$ and $F_r'((u^{i+1})^{-})\geq F_r'((u^i)^{-})$ because $F_r(u)$ is below the linear approximation. Moreover, if $F_r'((u^{i+1})^{-})= F_r'((u^i)^{-})$, we obtain that $F_r(u^{i+1})=u^{i+1}$. Otherwise, $F_r'((u^{i+1})^{-})\geq  F_r'((u^i)^{-})+\frac{1}{k}$, which follows from the concavity of $F_r(u)$ in Lemma~\ref{thm: fr property}. This iterative procedure continues until either $F_r(u^{i+1}) = u^{i+1}$ or $F_r'((u^{i+1})^{-}) \geq 1$ is satisfied.
    \item If $F_r'((u^i)^{-})\geq 1$, it implies that $u^i \leq \vec{a}_k$ and $r \leq \T_{(k)}(\vect{a})$. In this case, $u^i \leq u^{\star}$ because $u^{\star}\geq \vec{a}_k$, as shown in the left panel of Figure~\ref{fig: u init update}. Particularly, if $F_r'((u^i)^{-})= 1$, we have that $\vec{a}_k=\min \{a_i: a_i \geq u^i\}$. We subsequently verify whether this scenario corresponds to \ref{case3}.    
\end{itemize}



Following the previous steps, once we obtain $F_r'((u^{i})^{-})\geq 1$, we can conclude that $r < \T_{(k)}(\vect{a})$. Subsequently, our algorithm proceeds to identify the exact intersection point $u_r^{\star}$. To efficiently locate this intersection, we select two suitable initial points that are close to $u_r^{\star}$. To achieve this, we leverage the previously computed iterates $u^i \leq \Vec{a}_k <u^{i-1}$. The detailed selection procedure for these initial points is outlined as follows. 

\begin{algorithm}[!ht]
    \caption{Initialization ($2\leq k\leq n-1$)}
    \label{alg: initialization}
    \KwData{$\vect{a},~k,~r,~i \gets 1,~u^1 \gets r/k,~u_R \gets \infty,~u_L \gets -\infty,~u^{\star} \gets\infty,~l^{\star} \gets -\infty$} 
    \KwResult{$\textsc{Flag}$, $u^{\star}, l^{\star}, u_R, u_L, u_C$}
    \DontPrintSemicolon
    \SetKw{Break}{break}
    \SetKw{True}{True}
    \While{\True}{  
        \lIf{$F_r(u^i) = u^i$}{$\textsc{Flag}\gets0$; \Return \tcp*[f]{Case 1; $\textsc{Flag}=0$}}
        \If{$F_r'((u^i)^{-})= 1$}{$\Vec{a}_k \gets \min\{a_i: a_i \geq u^i\}$\; \If{$g_r(\Vec{a}_k, F_r(\Vec{a}_k)) = 0$}{$\textsc{Flag}\gets-1$, $u^{\star} \gets \Vec{a}_k, l^{\star} \gets F_r(\vec{a}_k)$; \Return \hfill \tcp{Case 3; $\textsc{Flag}=-1$}}}
        \lIf{$F_r'((u^i)^{-}) \geq 1$}{\Break}
        $u^{i+1}\gets\frac{u^i \left|\I(\vect{a},u^i,+)\right| - kF_r(u^i)}{\left|\I(\vect{a},u^i,+)\right| - k}$\hfill\tcp{$u^{i+1}<u^{i}$}
        $i \gets i+1$
    }
    \eIf{$i=1$}{
        \eIf{$g_r(\Vec{a}_1, F_r(\Vec{a}_1))<0$}{$\textsc{Flag}\gets-1$ and $l^{\star}\gets r/k$ \hfill\tcp{$\textsc{Flag}=-1$}}
        {
            $\textsc{Flag}\gets 1$, $u_R$ and $u_C$ using~\eqref{equ: initial points generating case 1}\hfill\tcp{$\textsc{Flag}=1$} 
        }
    }{
        \eIf{$g_r(u^{i-1}, F_r(u^{i-1}))<0$}{$\textsc{Flag}\gets 2$, $u_L$ and $u_C$ using~\eqref{equ: initial points generating case 2} \hfill\tcp{$\textsc{Flag}=2$} }
        {
            $\textsc{Flag}\gets 3$, $u_R$ and $u_C$ using~\eqref{equ: initial points generating case 3} \hfill\tcp{$\textsc{Flag}=3$}
        }
    }
    
\end{algorithm}

Initially, by examining the sign of $g_r(u^{i-1}, F_r(u^{i-1}))$, we determine the relationship between $u^{i-1}$ and $u_r^{\star}$. Specifically, if $g_r(u^{i-1}, F_r(u^{i-1}))>0$, then $F_r(u^{i-1})>G_r(u^{i-1})$, implying $u^{i-1}>u_r^{\star}$. Conversely, if $g_r(u^{i-1}, F_r(u^{i-1}))<0$, then $u^{i-1}<u_r^{\star}$. Guided by these observations and the iteration index $i$, we adaptively select two refined initial points for the subsequent computational stage, tailored to each of the following scenarios:

\begin{itemize}
    \item \label{item: initial case -1}$\textsc{Flag}=-1$ ($i=1$ and $g_r(\Vec{a}_1, F_r(\Vec{a}_1))<0$). We have $\Vec{a}_1 < u_r^{\star}$ and $l^{\star} = F_r(\Vec{a}_1) = r/k$. According to the solution given in~\eqref{kkt: solution form}, the optimal solution is directly obtainable as $\vect{x}^{\star} = \vect{a}$, except for $x^{\star}_{\I_1} = \frac{r}{k}\1_{|\I_1|}$, where $\I_1 := \I(\vect{a},r/k, +)$. 
    
    \item \label{item: initial case 1}$\textsc{Flag}=1$ ($i=1$ and $g_r(\Vec{a}_1, F_r(\Vec{a}_1))>0$). We have $\Vec{a}_1 > u_r^{\star}$. Due to the limited information available (only one iteration thus far), we select our points as:
    \begin{equation}
    \label{equ: initial points generating case 1}
    u_R = \Vec{a}_1, \quad u_C = \frac{D(u_R) }{k/\left|\I(\vect{a}, G_r(u_R), u_R)\right|} + u_R,
    \end{equation}
    Here, $u_C(<u_R)$ is horizontal coordinate of the intersection between the line \( l = r/k \) and the tangent line to \( G_r(u) \) at the point \( (u_R, G_r(u_R)) \), as illustrated in the left panel of Figure~\ref{fig: initial points generating}.
    \item \label{item: initial case 2}$\textsc{Flag}=2$ ($i>1$ and $g_r(u^{i-1}, F_r(u^{i-1}))<0$). We have $u^{i-1}<u_r^{\star}$ and select the following initial points:
    \begin{equation}
    \label{equ: initial points generating case 2}
    u_L = u^{i-1}, \quad u_C = (u_L+u')/2, \quad u'=\frac{kD(u^{i-1})}{\left|\I(\vect{a}, u^{i-1},+)\right|} + u^{i-1},
    \end{equation}
    i.e., $u'(>u_L)$ is horizontal coordinate of the intersection between the line \( l = G_r(u^{i-1}) \) and the tangent line to \( F_r(u) \) at the point \( (u_L, F_r(u_L)) \), as shown in the middle panel of Figure~\ref{fig: initial points generating}.

    \item \label{item: initial case 3}$\textsc{Flag}=3$ ($i>1$ and $g_r(u^{i-1}, F_r(u^{i-1}))>0$): We have $u^{i-1}>u_r^{\star}$. We accordingly define the points:
    \begin{equation}
    \label{equ: initial points generating case 3}
    u_R = u^{i-1}, \quad u_C = \left(u_R+ \max\{u'', u^i\}\right)/2, \quad u''= \frac{kD(u^{i-1})}{\left|\I(\vect{a}, u^{i-1},+)\right|} + u^{i-1},
    \end{equation}
    i.e., $u''(<u_R)$ is the horizontal coordinate of the intersection between the line \( l = G_r(u^{i-1}) \) and the tangent line to \( F_r(u) \) at the point \( (u_R, F_r(u_R)) \), as depicted in the right panel of Figure~\ref{fig: initial points generating}.
\end{itemize}

\begin{figure}[!ht]
\centering
\begin{minipage}[c]{0.32\linewidth}
    \centering
    \begin{tikzpicture}[scale=0.5]
        \draw[thick, -latex] (-0.5,0) -- (6,0) node[right] {$u$};
        \draw[thick, -latex] (0,-1) -- (0,3.5) node[above] {$l$};

        \draw[dashed] (-0.5, -0.5) -- (3.5, 3.5);
    
        \def\xvalues{{0.8, 1.3, 1.8, 2, 2.2, 2.5, 3.3, 4.2, 4.7, 5.2, 5.7, 6.0}}
        
        \xdef\ystart{-2.2}
        \foreach \i in {0,1,...,10} {
            \pgfmathsetmacro{\value}{\ystart + (11-(\i+1))/6*(\xvalues[\i+1] - \xvalues[\i])}
            \draw[very thick, blue] (\xvalues[\i], \ystart) -- (\xvalues[\i+1], \value);
            \xdef\ystart{\value}
            \ifnum \i=10
                \node[blue, right] at (\xvalues[\i+1], \ystart) {$F_r(u)$};
            \fi
    
        }
        \draw[very thick, red] (2.2, 2.2) -- (2.5,2.2) -- (3,2) -- (3.3, 1.9) -- (4, 1.2) -- (4.2, 1.1) -- (4.7, 0.5) -- (5.2, -0.4) -- (5.7, -0.9) -- (6.0, -1.15) node[below] {$G_r(u)$};

        \draw[ultra thick, dashed, MyYellow] (5.7, 1.95) -- (1.95, 1.95); 
        \draw[densely dashed] (5.7, -0.9) -- (2.28, 1.95); 
        \draw[densely dashed] (5.7, 1.95) -- (5.7, -0.9) node[above, yshift=1pt] {\footnotesize $u_R$}; 
        \draw[densely dashed] (2.28, 1.95) -- (2.28, 0) node[below, xshift=3pt] {\footnotesize $u_C$}; 
        \fill[orange] (5.7, 1.95) circle (3pt);
        \fill[orange] (2.28, 1.95) circle (3pt);
        \fill (5.7, -0.9) circle (3pt);
        
    \end{tikzpicture}
\end{minipage}
\hfill
\begin{minipage}[c]{0.32\linewidth}
    \centering
    \begin{tikzpicture}[scale=0.5]
        \draw[thick, -latex] (-0.5,0) -- (6,0) node[right] {$u$};
        \draw[thick, -latex] (0,-1) -- (0,3.7) node[above] {$l$};

        \draw[dashed] (-0.5, -0.5) -- (3.7, 3.7);
    
        \def\xvalues{{0.8, 1.3, 1.8, 2, 2.2, 2.5, 3.3, 4.2, 4.7, 5.2, 5.7, 6.0}}
        
        \xdef\ystart{-1.3}
        \foreach \i in {0,1,...,10} {
            \pgfmathsetmacro{\value}{\ystart + (11-(\i+1))/6*(\xvalues[\i+1] - \xvalues[\i])}
            \draw[very thick, blue] (\xvalues[\i], \ystart) -- (\xvalues[\i+1], \value);
            \xdef\ystart{\value}
            \ifnum \i=10
                \node[blue, right] at (\xvalues[\i+1], \ystart) {$F_r(u)$};
            \fi
    
        }
        \draw[very thick, red] (2.2, 2.2) -- (2.5,2.2) -- (3,2) -- (3.3, 1.9) -- (4, 1.2) -- (4.2, 1.1) -- (4.7, 0.5) -- (5.2, -0.4) -- (5.7, -0.9) node[below] {$G_r(u)$};

        \draw[ultra thick, dashed, MyYellow] (5.7, 2.85) -- (2.85, 2.85); 
        \draw[densely dashed] (2.85, 2.85) -- (2.85, 0) node[below, xshift=-5pt] {\footnotesize $u_L$};
        \draw[ultra thick, dashed, MyYellow] (4.85, 3.06) -- (0.85, -0.23);
        \draw[densely dashed] (3.65, 2.05) -- (3.65, 0) node[below,xshift=5pt] {\footnotesize $u'$};
        \draw[densely dashed] (3.25, 2.05) -- (3.25, 0) node[below] {\footnotesize $u_C$};  
        \draw[densely dashed] (2.5, 2.05) -- (4.1, 2.05);
        \fill (2.85, 2.05) circle (3pt);
        \fill (3.65, 2.05) circle (3pt);
        \fill[orange] (3.25, 2.05) circle (3pt);
        \fill[orange] (2.85, 1.4167) circle (3pt);   


    \end{tikzpicture}
\end{minipage}
\hfill
\begin{minipage}[c]{0.32\linewidth}
    \centering
    \begin{tikzpicture}[scale=0.5]
        \draw[thick, -latex] (-0.5,0) -- (6,0) node[right] {$u$};
        \draw[thick, -latex] (0,-1) -- (0,4.2) node[above] {$l$};

        \draw[dashed] (-0.6, -0.6) -- (4.2, 4.2);
    
        \def\xvalues{{0.8, 1.3, 1.8, 2, 2.2, 2.5, 3.3, 4.2, 4.7, 5.2, 5.7, 6.0}}
        
        \xdef\ystart{-0.6}
        \foreach \i in {0,1,...,10} {
            \pgfmathsetmacro{\value}{\ystart + (11-(\i+1))/6*(\xvalues[\i+1] - \xvalues[\i])}
            \draw[very thick, blue] (\xvalues[\i], \ystart) -- (\xvalues[\i+1], \value);
            \xdef\ystart{\value}
            \ifnum \i=10
                \node[blue, right] at (\xvalues[\i+1], \ystart) {$F_r(u)$};
            \fi
    
        }
        \draw[very thick, red] (2.2, 2.2) -- (2.5,2.2) -- (3,2) -- (3.3, 1.9) -- (4, 1.2) -- (4.2, 1.1) -- (4.7, 0.5) -- (5.2, -0.4) -- (5.7, -0.9) node[below] {$G_r(u)$};

        \draw[ultra thick, dashed, MyYellow] (5.7, 3.55) -- (3.55, 3.55); 
        \draw[ultra thick, dashed, MyYellow] (4.55, 3.283) -- (0.75, 0.75); 
        \draw[densely dashed] (3.55, 3.55) -- (3.55, 0) node[below, xshift=2pt] {\footnotesize $u_R$};
        \draw[densely dashed] (4, 1.65) -- (2, 1.65);
        \draw[densely dashed] (2.825, 2.13) -- (2.825, 0) node[below] {\footnotesize $u_C$};
        \draw[densely dashed] (2.1, 1.65) -- (2.1, 0) node[below] {\footnotesize $u''$};
        \fill[orange] (3.55, 2.6167) circle (3pt);
        \fill (3.55, 1.65) circle (3pt);
        \fill (2.1, 1.65) circle (3pt);
        \fill[orange] (2.825, 2.13) circle (3pt);
        
        
                
    \end{tikzpicture}
\end{minipage}
\caption{Illustration of the generation of the initial points (orange point). \textbf{(Left): }$\textsc{Flag}=1$. \textbf{(Middle): }$\textsc{Flag}=2$. \textbf{(Right): }$\textsc{Flag}=3$.}
\label{fig: initial points generating}
\end{figure}
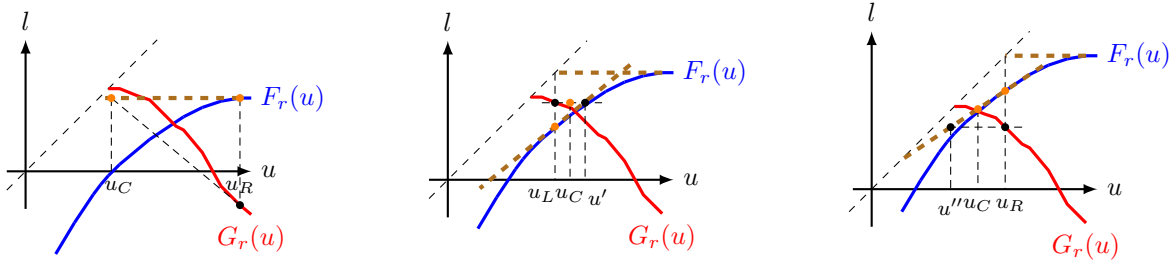


In summary, the classification described above is based upon the relative position between $u^{i-1}$ and $u_r^{\star}$. Our aim in this categorization is to select initial points that are as close as possible, thereby improving convergence efficiency. Additionally, it is noteworthy that the case $\textsc{Flag}=1$ and the case $\textsc{Flag}=3$ indicate the same scenario $u^{i-1} > u_r^{\star}$. However, in the case $\textsc{Flag}=1$, due to the limited amount of information available from the initial iteration stage, it is impossible to construct $u''$ in the same manner as done in the scenario $\textsc{Flag}=3$. Consequently, we explicitly distinguish it from $\textsc{Flag}=3$ to adequately handle this special case.\\
\\

\begin{algorithm}[!ht]
    \caption{Efficient Intersection Point Searching (EIPS)}
    \label{alg: EIPS}
    \DontPrintSemicolon
    \SetKwFunction{GSearching}{G-Searching}
    \SetKwFunction{Initialization}{Initialization}
    \SetKw{Continue}{Continue}
    \SetKw{break}{break}
    \SetKw{return}{return}
    \SetKw{true}{true}
    \SetKw{llif}{if}
    \SetKw{llelse}{else}
    \SetKw{then}{then}
    \SetKw{repeat}{repeat}
    \SetKw{until}{until}
    \KwData{$\vect{a}, k, r, \epsilon$.}
    \KwResult{$\vect{x}^{\star}$.}
    \lIf{$k=1$}{\Return $\vect{x}^{\star} = \min\{r\1_n, \vect{a}\}$}
    \lIf{$k=n$}{\return $\vect{x}^{\star} = \vect{a} +  \left(r - \max\{r, \1_n^\top \vect{a}\}\right)\1_n/n$}
    Obtain $\textsc{Flag}$, $u^{\star}, l^{\star}, u_R, u_L, u_C$ from Algorithm~\ref{alg: initialization} ($\texttt{Initialization}$)\;
    \lIf{$\textsc{Flag}=0$}{\Return $\vect{x}^{\star} = \vect{a}$}
    \lIf{$\textsc{Flag}=-1$}{go to line 28}
    \While{\true}{
        \lWhile{$F_r'((u_C)^{-}) > 1$}{$u_C \gets (u_C + u_R)/2$ }
        \lIf{$\left|D(u_C)\right| \leq \epsilon$}{$u_L \gets u_C$ and \break}
        \uIf{$F_r'((u_C)^{-}) = F_r'((u_L)^{-})$}{
                \lIf{$D(u_C) > 0$}{$u_L \gets u_C$, $u_C \gets \min\{a_i: a_i > u_C\}$}
                \lIf{$D(u_C) \le 0$}{\break}}
        \ElseIf{$F_r'((u_C)^{-}) = F_r'((u_R)^{-})$}{
                \lIf{$D(u_C) < 0$}{$u_R \gets u_C$, $u_C \gets \max\{a_i: a_i < u_C\}$}
                \lIf{$D(u_C) \geq 0$}{$u_L \gets u_C$ and \break} }
        \uIf{$u_R = +\infty$}{
                \llif $D(u_C) < 0$, \then $u_R \gets u_C$; \llelse $u_{LB} \gets u_L, u_L \gets u_C$
        }
        \uElseIf{$u_L = -\infty$}{
            \llif $D(u_C) > 0$, \then $u_L \gets u_C$; \llelse $u_{RB} \gets u_R, u_R \gets u_C$
        }
        \Else{
            {$u_R \gets u_C \ \text{if}\ D(u_C) < 0;\ u_L \gets u_C \ \text{if}\ D(u_C) > 0$}
        }
        Update $u_C$ by~\eqref{equ: u piv update}\;
    }
    \lWhile{$\left|D(u_L)\right| > \epsilon$}{update $u_L$ by~\eqref{equ: exact step update}} 
    \lIf{$m=k$}{
        $u^{\star} \gets \min\{a_i: a_i \geq u_L\}$, $l^{\star} \gets F_r(u^{\star})$
    }
    \lElse{
        $u^{\star} \gets u_L$, $l^{\star} \gets F_r(u^{\star})$
    }
    Construct $\alpha$, $\beta$, $\gamma$ through~\eqref{equ: related set} and \return $\vect{x}^{\star}$ through~\eqref{kkt: solution form}\;
\end{algorithm}

\textbf{Pivot step} (6-24 in Algorithm~\ref{alg: EIPS}) Given the two initial points obtained in the Initialization step, we now begin searching for a narrow interval that contains $u_r^{\star}$. The main idea is to identify both an upper and a lower bound for $u_r^{\star}$ and iteratively shrink the interval until we obtain $\vec{a}_{j+1}\leq u_L\leq u_r^{\star}\leq \vec{a}_j$.

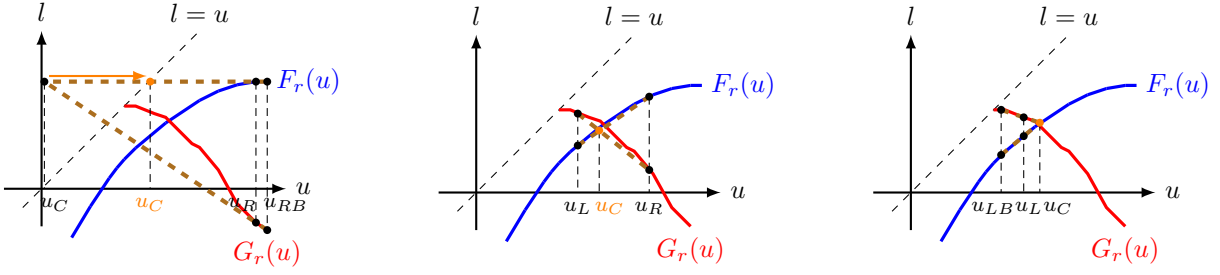
\begin{figure}[!ht]
\centering
\begin{minipage}[c]{0.3\linewidth}
    \centering
    \begin{tikzpicture}[scale=0.5]
        \draw[thick, -latex] (-1,0) -- (6.5,0) node[right] {$u$};
        \draw[thick, -latex] (0,-1) -- (0,4.2) node[above] {$l$};

        \draw[dashed] (-0.5, -0.5) -- (4.2, 4.2) node[above] {$l = u$};
    
        \def\xvalues{{0.8, 1.3, 1.8, 2, 2.2, 2.5, 3.3, 4.2, 4.7, 5.2, 5.7, 6.0}}
        
        \xdef\ystart{-1.3}
        \foreach \i in {0,1,...,10} {
            \pgfmathsetmacro{\value}{\ystart + (11-(\i+1))/6*(\xvalues[\i+1] - \xvalues[\i])}
            \draw[very thick, blue] (\xvalues[\i], \ystart) -- (\xvalues[\i+1], \value);
            \xdef\ystart{\value}
            \ifnum \i=10
                \node[blue, right] at (\xvalues[\i+1], \ystart) {$F_r(u)$};
            \fi
    
        }
        \draw[very thick, red] (2.2, 2.2) -- (2.5,2.2) -- (3,2) -- (3.3, 1.9) -- (4, 1.2) -- (4.2, 1.1) -- (4.7, 0.5) -- (5.2, -0.4) -- (5.7, -0.9) -- (6.0, -1.1) node[below] {$G_r(u)$};

        \draw[ultra thick, dashed, MyYellow] (6.0, 2.85) -- (0.075,2.85); 
        \draw[ultra thick, dashed, MyYellow] (6, -1.1) -- (0.075,2.85); 
        \draw[densely dashed] (6.0, 2.85) -- (6, 0) node[below, xshift=7pt] {\footnotesize $u_{RB}$};
        \draw[densely dashed] (6.0, 0) -- (6, -1.1);
        \fill (6.0, 2.85) circle (3pt);
        \fill (6, -1.1) circle (3pt);
        \draw[densely dashed] (5.7, 2.85) -- (5.7, 0) node[below,xshift=-5pt] {\footnotesize $u_R$}; 
        \draw[densely dashed] (5.7, 0) -- (5.7, -0.9); 
        \fill (5.7, 2.85) circle (3pt);
        \fill (5.7, -0.9) circle (3pt);   
        \draw[densely dashed] (0.075,2.85) -- (0.075,0) node[below, xshift=4pt] {\footnotesize $u_C$}; 
        \draw[densely dashed] (2.8875,2.85) -- (2.8875,0) node[below, orange] {\footnotesize $u_C$};         
        \fill (0.075,2.85) circle (3pt);
        \fill[orange] (2.8875,2.85) circle (3pt);
        \draw[thick, -latex, orange] (0.2,3) -- (2.8,3);
    \end{tikzpicture}
\end{minipage}
\hfill
\begin{minipage}[c]{0.3\linewidth}
    \centering
    \begin{tikzpicture}[scale=0.5]
        \draw[thick, -latex] (-1,0) -- (6.5,0) node[right] {$u$};
        \draw[thick, -latex] (0,-1) -- (0,4.2) node[above] {$l$};

        \draw[dashed] (-0.5, -0.5) -- (4.2, 4.2) node[above] {$l = u$};
    
        \def\xvalues{{0.8, 1.3, 1.8, 2, 2.2, 2.5, 3.3, 4.2, 4.7, 5.2, 5.7, 6.0}}
        
        \xdef\ystart{-1.3}
        \foreach \i in {0,1,...,10} {
            \pgfmathsetmacro{\value}{\ystart + (11-(\i+1))/6*(\xvalues[\i+1] - \xvalues[\i])}
            \draw[very thick, blue] (\xvalues[\i], \ystart) -- (\xvalues[\i+1], \value);
            \xdef\ystart{\value}
            \ifnum \i=10
                \node[blue, right] at (\xvalues[\i+1], \ystart) {$F_r(u)$};
            \fi
    
        }
        \draw[very thick, red] (2.2, 2.2) -- (2.5,2.2) -- (3,2) -- (3.3, 1.9) -- (4, 1.2) -- (4.2, 1.1) -- (4.7, 0.5) -- (5.2, -0.4) -- (5.7, -0.9) node[below] {$G_r(u)$};

        \draw[ultra thick, dashed, MyYellow] (2.7, 2.1) -- (4.6, 0.6); 
        \draw[ultra thick, dashed, MyYellow] (2.7, 1.25) -- (4.6, 2.55); 
        \draw[densely dashed] (2.7, 2.1) -- (2.7, 0) node[below] {\footnotesize $u_L$};
        \fill (2.7, 2.1) circle (3pt);
        \fill (2.7, 1.25) circle (3pt);
        \draw[densely dashed] (4.6, 2.55) -- (4.6, 0) node[below] {\footnotesize $u_R$}; 
        \fill (4.6, 2.55) circle (3pt);
        \fill (4.6, 0.6) circle (3pt);   
        \draw[densely dashed] (3.27,1.651285) -- (3.27,0) node[below, xshift=4pt, orange] {\footnotesize $u_C$}; 
        \fill[orange] (3.27,1.651285) circle (3pt);
    \end{tikzpicture}
\end{minipage}
\hfill
\begin{minipage}[c]{0.3\linewidth}
    \centering
    \begin{tikzpicture}[scale=0.5]
        \draw[thick, -latex] (-1,0) -- (6.5,0) node[right] {$u$};
        \draw[thick, -latex] (0,-1) -- (0,4.2) node[above] {$l$};

        \draw[dashed] (-0.5, -0.5) -- (4.2, 4.2) node[above] {$l = u$};
    
        \def\xvalues{{0.8, 1.3, 1.8, 2, 2.2, 2.5, 3.3, 4.2, 4.7, 5.2, 5.7, 6.0}}
        
        \xdef\ystart{-1.3}
        \foreach \i in {0,1,...,10} {
            \pgfmathsetmacro{\value}{\ystart + (11-(\i+1))/6*(\xvalues[\i+1] - \xvalues[\i])}
            \draw[very thick, blue] (\xvalues[\i], \ystart) -- (\xvalues[\i+1], \value);
            \xdef\ystart{\value}
            \ifnum \i=10
                \node[blue, right] at (\xvalues[\i+1], \ystart) {$F_r(u)$};
            \fi
    
        }
        \draw[very thick, red] (2.2, 2.2) -- (2.5,2.2) -- (3,2) -- (3.3, 1.9) -- (4, 1.2) -- (4.2, 1.1) -- (4.7, 0.5) -- (5.2, -0.4) -- (5.7, -0.9) node[below] {$G_r(u)$};

        \draw[ultra thick, dashed, MyYellow] (2.4, 2.2) -- (24/7, 13/7); 
        \draw[ultra thick, dashed, MyYellow] (2.4, 1) -- (24/7, 13/7); 
        \draw[densely dashed] (3, 2) -- (3, 0) node[below, xshift=2pt] {\footnotesize $u_L$};
        \fill (3, 2) circle (3pt);
        \fill (3, 1.5) circle (3pt);
        \draw[densely dashed] (24/7, 13/7) -- (24/7, 0) node[below, xshift=7pt] {\footnotesize $u_C$}; 
        \fill (2.4, 2.2) circle (3pt);
        \fill (2.4, 1) circle (3pt);   
        \draw[densely dashed] (2.4, 2.2) -- (2.4, 0) node[below, xshift=-3pt] {\footnotesize $u_{LB}$}; 
        \fill[orange] (24/7, 13/7) circle (3pt);
    \end{tikzpicture}
\end{minipage}
\caption{Illustration of the update of $u_C$ (orange point). \textbf{(Left)}: $u_L = -\infty$. the generating (black point) $u_C \leq \Vec{a}_{k+1} (m > k)$, we then generate (orange point) $u_C$ by setting $u_C \gets (u_C + u_R)/2$, now the new generating $u_C$ is valid. \textbf{(Middle)}: $u_L > -\infty, u_R < \infty$. \textbf{(Right)}: $u_R = \infty$.}
\label{fig: ui+1 update}
\end{figure}

Suppose we are currently processing $u_C$. Before evaluating $G_r(u)$ via Algorithm~\ref{alg: G-Searching}, we must verify that $F_r'((u_C)^{-}) \leq 1$, as $G_r(u)$ is undefined for $u \leq \vec{a}_{k+1}$. If $F_r'((u_C)^{-}) > 1$ (implying $u_C < \vec{a}_{k+1}$), we iteratively update $u_C$ by averaging it with $u_R$ until the condition $F_r'((u_C)^{-}) \leq 1$ is satisfied (see Figure~\ref{fig: ui+1 update}, left panel). Once valid, if the convergence criterion $|D(u_C)| \leq \epsilon$ is met, we designate $u_r^{\star} = u_C$ and terminate the procedure. Otherwise, we update the search interval for $u_r^{\star}$ as follows:
\begin{itemize}
    \item \textbf{Lines 9–15}: Check whether the vector $\vec{a}$ contains any elements in the interval $(u_L, u_C)$ (or $(u_C, u_R)$ for lines 12-14). If no such elements exist, then $F_r(u)$ is linear over $(u_L, u_C)$. If $D(u_C) > 0$, we obtain $u_r^{\star} \in (u_L, u_C)$; otherwise, we shift the interval $(u_L, u_C)$ along the same linear segment or move to the next linear segment. We then recheck whether $u_r^{\star} \in (u_L, u_C)$ in the updated region.
    \item \textbf{Lines 16-19}: This stage handles the scenario where only one finite bound exists. For instance, if $u_R=+\infty$, we rely solely on the lower bound $u_L\leq u_r^{\star}$, with the current pivot satisfying $u_C>u_L$. We first determine whether $u_C>u_r^{\star}$. 
    If so, we treat the current $u_C$ as the new upper bound by setting $u_R\gets u_C$, and subsequently update $u_C$ to a point within the new interval $(u_L,u_R)$. 
    Conversely, if $u_C\leq u_r^{\star}$, we update the lower bound by setting $u_L\gets u_C$, and increase $u_C$ according to line 23. This iterative process continues until both bounds are identified or the convergence condition $|D(u_C)|<\epsilon$ is satisfied.
    \item \textbf{Line 21}: When both $u_L$ and $u_R$ are established, we iteratively shrink the search interval using $u_C$ and update $u_C$ according to line 23.

\end{itemize}

A new point $u_C$ is generated using a modified secant method applied to the function $D(u)$. This update step is defined by:
\begin{equation}
\label{equ: u piv update} 
u_C = 
\begin{cases}
    \texttt{Generate}(u_{LB}, u_L), \ & \text{if}\ u_R = +\infty\\
    \texttt{Generate}(u_{RB}, u_R), \ & \text{if}\ u_L = +\infty \\
    \texttt{Generate}(u_R, u_L), \ & \text{otherwise.}
\end{cases},
\end{equation}
where $z = \texttt{Generate}(x, y) = \frac{-xD(y) + yD(x)}{-D(y) + D(x)}$. This $\texttt{Generate}$ function provides the root estimate based on a linear interpolation of $D(u)$ between points $x$ and $y$. This is equivalent to finding the horizontal coordinate of the intersection between the line passing through points $(x, F_r(x))$ and $(y, F_r(y))$ and the line passing through points $(x, G_r(x))$ and $(y, G_r(x))$, as shown in Figure~\ref{fig: ui+1 update}.

In conclusion, the iterative loop in the \textbf{Pivot step} terminates exclusively when one of the following three conditions is met:
\begin{itemize}
    \item Convergence achieved: $\left|D(u_C)\right| \leq \epsilon$.
    \item A narrow interval that contains $u_r^{\star}$ found: either $\Vec{a}_{j+1} \leq u_L < u_r^{\star} \leq u_C \leq \Vec{a}_j$ or $\Vec{a}_{j+1} \leq u_C \leq u_r^{\star} < u_R \leq \Vec{a}_j$ for some $j$.
\end{itemize}

    

\textbf{Exact step} (26-29 in Algorithm~\ref{alg: EIPS}) Given the narrow interval containing $u_r^{\star}$ obtained in the Pivot step, we now proceed to determine the exact value of $u_r^{\star}$. 

Since $D(u)$ is piecewise linear and convex over this interval, we can apply a Newton-type iteration to locate the root $u_r^{\star}$. Specifically, we begin the procedure at the lower bound $u_L$. If $\left|D(u_L)\right| \leq \epsilon$, the exact value of $u_r^{\star}$ has been located, and the procedure terminates. Otherwise, we construct a linear approximation of $D(u_L)$ at $u_L$ with its right-hand derivative $D'((u_L)^+)$ and compute the intersection between the linear approximation and the horizontal axis (i.e., where the function value equals zero). This yields the update rule
\begin{equation}
\label{equ: exact step update}
u_L \gets - \frac{D(u_L)}{D'((u_L)^+)} + u_L.
\end{equation}
By repeatedly applying this update rule, the iteration converges and yields the desired solution $u_r^{\star}$. 

Upon accurately determining the intersection point $u_r^{\star}$, we classify it into either \ref{case2} or \ref{case3}, depending on the relationship between $m$ and $k$. Finally, we construct the solution vector $\vect{x}^{\star}$ using~\eqref{kkt: solution form}.

\begin{remark}
    The proposed algorithm, EIPS, can be interpreted as an adaptive variant of ESGS~\cite{roth2025n} discussed in Section~\ref{sec: intro}, which constructs a path from an initial candidate pair $(k_0', k_1') = (k-1, k)$ toward the true indices $(k_0, k_1)$. This path consists of incremental adjustments of $k_0' \gets k_0'-1$ and $k_1' \gets k_1'+1$ in each iteration and stops until certain termination conditions are satisfied. By contrast, in EIPS, the Pivot step initially employs relatively larger adjustments, i.e., effectively a larger `stepsize' for $k_0'$ in one iteration, to quickly identify a narrow interval containing $u_r^{\star}$. Once such a interval is located, corresponding to identifying $k_0$, EIPS then proceeds to the Exact step, where the adjustments become progressively smaller, and EIPS finally locates $k_1$. Thus, compared with ESGS, the stepsize in EIPS is adaptively controlled, allowing EIPS to serve as an accelerated search strategy for efficiently pinpointing the pair $(k_0, k_1)$. 
\end{remark}

\subsection{Accelerating tricks}
\label{subsec: ACC tricks}

The efficiency of EIPS is closely tied to the computations of $F_r(u)$ and $G_r(u)$. 
Repeatedly evaluating these functions at each iteration can be computationally expensive, especially when the length of $\vect{a}$ is large. To reduce this cost, we exploit the fact the interval $(u_L, u_R)$ shrinks while always containing the optimal solution $u_r^{\star}$. This ensures that elements of $\vect{a}$ lying outside the interval become irrelevant or reusable. By filtering out these unnecessary elements and reusing intermediate sums, we eliminate redundant computations and significantly speed up the evaluation.

Specifically, we begin with two initial filtering operations on $\vect{a}$ to construct the filtered sets $\vect{a}_f$ and $\vect{a}_g$, which are designed to facilitate the computation of $F_r(u)$ and $G_r(u)$, respectively. We define $\vect{a}_f$ contain all elements in $[u_L,u_R)$, and $\vect{a}_g$ to contain all elements in $[G_r(u_R),u_R)$, as these subsets are sufficient for computing $F_r(u)$ and $G_r(u)$.
The construction depends on the scenarios determined during the Initialization step:
\begin{itemize}
    \item $\textsc{Flag}=1$ ($u_R=\vec{a}_1$, $u^i\leq \vec{a}_{k+1}\leq u_r^{\star}$): set $\vect{a}_f = \left\{a_j: j \in \I(\vect{a}, u^i, u_R)\right\}$ with $B = 0$, $N = 0$; set $\vect{a}_g = \left\{a_j: j \in \I(\vect{a}, G_r(u_R), u_R)\right\}$. 
    \item $\textsc{Flag}=2$ ($u_R=+\infty$): set $\vect{a}_f = \left\{a_j: j \in \I(\vect{a}, u_L, +)\right\}$ with $B = 0$, $N = 0$; set $\vect{a}_g = \vect{a}$. 
    \item $\textsc{Flag}=3$ ($u^i\leq \vec{a}_{k+1}\leq u_r^{\star}$): set $\vect{a}_f = \left\{a_j: j \in \I(\vect{a}, u^i ,u_R)\right\}$ with $B = \sum_{i \in \I(\vect{a}, u_R, +)}a_i$, $N = \left|\I(\vect{a}, u_R, +)\right|$; set $\vect{a}_g = \left\{a_j: j \in \I(\vect{a}, G_r(u_R), u_R)\right\}$. 
\end{itemize}
Here, $B$ denotes the cumulative sum of elements in $\vect{a}$ exceeding $u_R$, and $N$ is their count. 

Given $u_C$, we compare $F_r(u_C)$ and $G_r(u_C)$ to determine whether $u_C > u_r^{\star}$. 
If $F_r(u_C) > G_r(u_C)$, then $u_C > u_r^{\star}$, and we update $u_R \gets u_C$. 
Let $\I^{1} := \I(\vect{a}_f, u_C,+)$. We then update $\vec{a}_f$ by removing all elements greater than or equal to $u_C$. We recompute $B$ and $N$, and update
$F_r(u_C)$ through:
$$
\begin{aligned}
& N \gets \left|\I^{1}\right| + N,\quad B\gets \sum_{i \in \I^{1}}a_i + B \\
& F_r(u_C) = \frac{1}{k}\left(r - B + N\cdot u_C\right),
\end{aligned}
$$
Additionally, we update $\vect{a}_g$ by containing only the elements in $[G_r(u_C), u_C)$.

Otherwise, if $u_C \le u_r^{\star}$, we set $u_L\gets u_C$ and update $\vec{a}_f$ by removing all elements smaller than $u_C$, while leaving $\vect{a}_g$ unchanged.

Through these filtering and partitioning techniques, the size of $\vect{a}_f$ continuously decreases across iterations, significantly improving computational efficiency. Figure~\ref{fig: Fru accelerating} illustrates how partitioning $\vect{a}$ and recording intermediate computations effectively accelerate the calculation of $F_r(u)$.

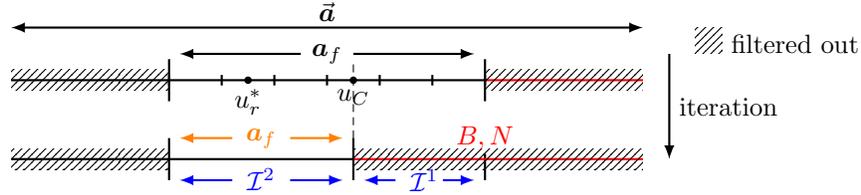
\begin{figure}[!ht]
\centering
    \begin{tikzpicture}[scale=0.7]
        \draw[thick, latex-latex] (-4,1) -- (8,1);
        \draw[thick, -latex] (8.5, 0.5) -- (8.5, -1.5) node[pos=0.5, right] {iteration};
        \node at (2, 1.3) {$\Vec{\vect{a}}$};

        \draw[thick] (-4,0) -- (5,0);       
        \draw[thick, red] (5,0) -- (8,0); 
        \fill[pattern=north east lines, pattern color=black] (5, -0.2) rectangle (8,0.2);
        \fill[pattern=north east lines, pattern color=black] (-4, -0.2) rectangle (-1,0.2);
        
        \def\xvalues{{-1, 0, 1, 2, 3, 4, 5}};
        \foreach \i in {0,1,...,6}{
            \ifnum \i=0
                \draw[thick] (\xvalues[\i], 0.4) -- (\xvalues[\i], -0.4);
            \fi
            \ifnum \i=6
                \draw[thick] (\xvalues[\i], 0.4) -- (\xvalues[\i], -0.4);
            \fi
            \draw[thick] (\xvalues[\i], 0.1) -- (\xvalues[\i], -0.1);
        }
        \draw[thick, latex-] (-0.8, 0.5) -- (1.5, 0.5);        
        \draw[thick, -latex] (2.5, 0.5) -- (4.8, 0.5);  
        \node at (2, 0.5) {$\vect{a}_f$};

        \fill (0.5, 0) circle (2pt) node[below] {$u^{\star}_r$};
        \fill (2.5, 0) circle (2pt) node[below] {$u_C$};

        \draw[thick] (-4, -1.5) -- (2.5, -1.5);
        \fill[pattern=north east lines, pattern color=black] (-4, -1.3) rectangle (-1,-1.7);
        \draw[thick] (-1, -1.1) -- (-1, -1.9);
        \draw[thick] (5, -1.4) -- (5, -1.9);
        \draw[thick, red] (2.5, -1.5) -- (8, -1.5);  
        \fill[pattern=north east lines, pattern color=black] (2.5, -1.3) rectangle (8,-1.7);
        \node[red] at (5, -1.1) {$B, N$};        
        \draw[densely dashed] (2.5, 0.3) -- (2.5, -1.4);
        \draw[thick] (2.5, -1.1) -- (2.5, -1.9);
        \draw[thick, latex-, orange] (-0.8, -1.1) -- (0.2, -1.1);    
        \node[orange] at (0.75, -1.1) {$\vect{a}_f$};
        \draw[thick, -latex, orange] (1.3, -1.1) -- (2.3, -1.1);  
        \node[blue, below] at (0.75, -1.5) {$\I^{2}$};
        \draw[thick, latex-, blue] (-0.8, -1.9) -- (0.2, -1.9);    
        \draw[thick, -latex, blue] (1.3, -1.9) -- (2.3, -1.9);  
       
        \draw[thick, latex-, blue] (2.7, -1.9) -- (3.3, -1.9);    
        \node[blue] at (3.85, -1.9) {$\I^{1}$};
        \draw[thick, -latex, blue] (4.2, -1.9) -- (4.8, -1.9);
        \begin{scope}[shift={(9,2)}]
            \fill[pattern=north east lines, pattern color=black] (0, -1.5) rectangle (0.5,-1) node[right,black, yshift=-6pt] {filtered out};
        \end{scope}
    \end{tikzpicture}
\caption{Illustration of how to accelerate calculating $F_r(u)$, where $\vect{a}$ is sorted. Given $\vect{a}_f$, if $u_C > u^{\star}_r$, the elements in $\I^{1}$ contribute to the calculation of $F_r(u^{\star}_r)$. Update cumulative sums ($B, N$) using the information in $\I^1$ and replace $\vect{a}_f$ with the element in $\I^2$, leading to a smaller size.}
\label{fig: Fru accelerating}
\end{figure}

\subsection{Convergence Analysis of Algorithm~\ref{alg: EIPS}}
\label{subsec: eips analysis}
After presenting our proposed Algorithm~\ref{alg: EIPS} and some accelerating tricks, we now establish its convergence properties. Before analyzing the convergence of the full algorithm, we begin with the following lemma, which ensures that the potential issue of undefined behavior ($F_r'((u_C)^-) > 1$) is resolved within a finite number of iterations. 

\begin{lemma}
\label{lem: finite}
    In the Pivot step (line 7 in Algorithm~\ref{alg: EIPS}), if $F_r'((u_C)^-) > 1$, then $u_R < \infty$, and after a finite number of iterations we obtain an updated point  $u_C'$ such that $F_r'((u_C')^-) \leq 1$. Moreover, $u_R - u_C' \geq \frac{1}{2}(u_R - u_r^{\star})$. 
\end{lemma}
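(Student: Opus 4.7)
The plan is to prove the three assertions — $u_R<\infty$, finite termination of the halving, and the contraction $u_R-u_C'\geq\tfrac{1}{2}(u_R-u_r^*)$ — in that order, since each builds on the previous.

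First, I would translate the trigger condition $F_r'((u_C)^-)>1$ into a statement about the location of $u_C$. By Lemma~\ref{thm: fr property}, this derivative equals $|\I(\vect{a},u_C,+)|/k$, so the trigger forces strictly more than $k$ entries of $\vect{a}$ to be $\geq u_C$, which gives $u_C\leq \vec{a}_{k+1}$. The Pivot step is only entered in Cases~\ref{case2}--\ref{case3}, where $u_r^*>\vec{a}_{k+1}$, so in fact $u_C<u_r^*$ as well.

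The main obstacle is proving $u_R<\infty$. I would rule out the only branch under which $u_R$ can still equal $+\infty$, namely $\textsc{Flag}=2$. The initialization in that branch sets $u_L=u^{i-1}>\vec{a}_k$ (because Algorithm~\ref{alg: initialization} left the loop with $F_r'((u^{i-1})^-)<1$) and $u_C=(u_L+u')/2$ with $u'>u_L$, so $u_C>\vec{a}_{k+1}$ initially. On any later iteration that keeps $u_R$ infinite, line 17 must take the else branch (which requires $D(u_C)>0$, equivalently $u_C<u_r^*$), updating $u_{LB}\gets u_L$ and $u_L\gets u_C$. The next candidate $u_C=\texttt{Generate}(u_{LB},u_L)$ then satisfies
\[
u_C - u_L \;=\; \frac{(u_L-u_{LB})\,D(u_L)}{D(u_{LB})-D(u_L)} \;>\; 0,
\]
since $D$ is strictly positive and strictly decreasing on $(-\infty,u_r^*)$. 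By induction, $u_C>u_L>\vec{a}_{k+1}$ throughout the $u_R=+\infty$ phase, contradicting the trigger condition. Hence whenever line 7 fires, $u_R$ is already finite.

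Finally, for the remaining two claims, the halving $u_C\gets(u_C+u_R)/2$ produces $u_C^{(j)}=u_R-(u_R-u_C^{(0)})/2^j$, which exceeds $\vec{a}_{k+1}$ after at most $\lceil \log_2((u_R-u_C^{(0)})/(u_R-\vec{a}_{k+1}))\rceil$ iterations, a finite number because $u_R>u_r^*>\vec{a}_{k+1}$. Taking $u_C'$ to be the first such iterate and noting that its predecessor still satisfied $u_C^{(j-1)}\leq\vec{a}_{k+1}$, I would conclude
\[
u_R - u_C' \;=\; \frac{u_R - u_C^{(j-1)}}{2} \;\geq\; \frac{u_R - \vec{a}_{k+1}}{2} \;>\; \frac{u_R - u_r^*}{2},
\]
which delivers the stated inequality.
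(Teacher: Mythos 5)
Your proof is correct and follows essentially the same route as the paper's: rule out $\textsc{Flag}=2$ to get $u_R<\infty$, bound the number of halvings by $\log_2$ of the ratio of interval lengths, and obtain the factor-$\tfrac{1}{2}$ bound from the fact that the predecessor iterate still lay at or below $\vec{a}_{k+1}<u_r^*$. The only difference is that you supply an explicit inductive argument (via the sign and monotonicity of $D$ under the \texttt{Generate} update) for why the trigger never fires while $u_R=+\infty$, a point the paper simply asserts.
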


\begin{proof}
For \textsc{Flag}=2, we have $u_L>-\infty$ and $F_r'((u_C)^-) \leq 1$ is always satisfied. Thus, if $F_r'((u_C)^-) > 1$, the case corresponds to \textsc{Flag}=1 or \textsc{Flag}=3, and we have $u_R < \infty$.
Since $u_C \leq \vec{a}_{k+1} < u_R$, the interval $[u_C,u_R]$ is repeatedly halved until its length becomes shorter than that of $[\vec{a}_{k+1},u_R]$. Therefore, the maximum number of iterations is $\left\lceil\log_2((u_R-u_C)/(u_R-\vec{a}_{k+1})) \right\rceil + 1$. Moreover, we have $u_R - u_C' \geq \frac{1}{2}(u_R - u_r^{\star})$ because otherwise we would already have $F_r'((u_C')^-) \leq 1$ in the previous iteration.


\end{proof}

Lemma~\ref{lem: finite} indicates that the undefined issue only occurs when $u_R = \infty$ and that it is resolved in finite iterations. We now proceed to establish the convergence of Algorithm~\ref{alg: EIPS}. 

\begin{theorem}
\label{thm: convergence}
    Algorithm~\ref{alg: EIPS} terminates in a finite number of iterations. Furthermore, its worst-case complexity for Algorithm~\ref{alg: EIPS} is $O((n+k)n^2)$. 
\end{theorem}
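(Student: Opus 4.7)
The plan is to decompose Algorithm~\ref{alg: EIPS} into its three stages (Initialization, Pivot, Exact), bound the number of outer iterations in each, and combine this with the per-iteration cost to obtain $O((n+k)n^{2})$. The per-iteration cost is dominated by one call to Algorithm~\ref{alg: G-Searching}, so the first step is to bound its complexity. I would argue that each pass of the outer \textbf{while} loop in G-Searching either leaves $\vect{v}$ unchanged (in which case $\rho=G_r(u)$ and the loop exits, by the observation already made just after Algorithm~\ref{alg: G-Searching}) or removes at least one element of $\vect{v}$. Since $|\vect{v}|\le n$, there are at most $n$ passes, each costing $O(n)$, so G-Searching terminates in $O(n^{2})$ time.

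Next I would bound Initialization (Algorithm~\ref{alg: initialization}). Using the monotonicity established in the discussion preceding Algorithm~\ref{alg: initialization}, namely that whenever the loop does not break, consecutive iterates satisfy $F_r'((u^{i+1})^{-})\ge F_r'((u^{i})^{-})+1/k$, and the fact that the loop must terminate as soon as $F_r'((u^{i})^{-})\ge 1$, the number of iterations is at most $k$. Each iteration evaluates $F_r(u^i)$, $F_r'((u^{i})^{-})$, and $g_r$ in $O(n)$ time, giving an $O(nk)$ bound for Initialization.

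For the Pivot step I would introduce the monovariant
\[
M(u_L,u_R) := \bigl|\{\,i\in[n]:\ u_L\le a_i<u_R\,\}\bigr|,
\]
namely the number of breakpoints of $F_r$ in the current search interval. Using concavity of $F_r$ (Lemma~\ref{thm: fr property}) and the corresponding monotonicity of $G_r$, I would check each of the branches in lines~9--22 and show that every outer iteration either terminates, proceeds to the Exact step, or causes $M$ to strictly decrease within at most two iterations. The only subtle branches are lines~10--14, where $F_r'((u_C)^{-})$ coincides with one of the endpoint derivatives and the algorithm shifts $u_C$ to the adjacent $a_i$ value: this shift does not by itself reduce $M$, but forces the next iteration to land in a strictly new linear piece of $F_r$, after which $M$ drops by at least one. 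The inner \textbf{while} loop triggered by $F_r'((u_C)^{-})>1$ halves $[u_C,u_R]$ and terminates in $O(\log n)$ passes by Lemma~\ref{lem: finite}. Since $M\le n$, the Pivot step performs $O(n)$ outer iterations, each calling G-Searching once and evaluating $F_r$ a constant number of times, so the Pivot step costs $O(n^{3})$.

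Finally, in the Exact step $D(u)=G_r(u)-F_r(u)$ is piecewise linear and convex on the narrow interval returned by the Pivot step, with $D(u_L)\le 0$. The Newton-type update~\eqref{equ: exact step update} therefore produces a strictly increasing sequence of iterates, and because $D$ is convex and piecewise linear each update either reaches $u_r^{*}$ exactly or crosses into a strictly new linear piece of $D$. Since the interval contains at most $n$ linear pieces, the Exact step terminates in $O(n)$ iterations, each of cost $O(n^{2})$, for a total of $O(n^{3})$. Summing the three stages yields $O(nk)+O(n^{3})+O(n^{3})=O((n+k)n^{2})$. The main obstacle I anticipate is the case analysis behind the Pivot monovariant: one must verify, through the mixture of secant updates, the two boundary configurations ($u_R=+\infty$ and $u_L=-\infty$), and the shift operations in lines~10--14, that no linear piece of $F_r$ is examined twice, which requires careful bookkeeping using the strict monotonicity of $F_r'$ between breakpoints.
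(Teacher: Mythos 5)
Your proposal is correct and follows essentially the same route as the paper: a three-stage decomposition with an $O(k)$ bound on Initialization via the $1/k$ increments of $F_r'$, a monovariant argument for the Pivot step (the paper uses the index pair $i_L,i_R$ with $i_L\le k$, giving a slightly tighter $k$-iteration bound than your $O(n)$ breakpoint count, though both yield $O((n+k)n^2)=O(n^3)$ overall), a piecewise-linear/convexity argument for the Exact step, and the $O(n^2)$ worst case of G-Searching as the per-iteration cost. The only slip is the sign of $D(u_L)$ at the start of the Exact step, which should be $D(u_L)\ge 0$ since $u_L\le u_r^*$ and $D>0$ to the left of the root; this does not affect your conclusion that the Newton-type updates increase $u_L$.
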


\begin{proof}
Algorithm~\ref{alg: EIPS} consists of three steps: the Initialization step, the Pivot step, and the Exact step. We prove that each step terminates in a finite number of iterations.

\textbf{Initialization step.} Recall that in Algorithm~\ref{alg: initialization}, if $F_r'((u^{i+1})^{-})= F_r'((u^i)^{-})$, we obtain $F_r(u^{i+1})=u^{i+1}$. Otherwise, by construction, $F_r'((u^{i+1})^{-}) \geq F_r'((u^i)^{-})+\frac{1}{k}$. Since $F_r'((u^0)^{-}) = 0$ initially and Algorithm~\ref{alg: initialization} terminates either $F_r'((u^{i+1})^{-}) \geq 1$ or $F_r(u^{i+1}) = u^{i+1}$, it follows that the Initialization step terminates within at most $k$ iterations.

\textbf{Pivot step.} 
The Pivot step seeks an index $j$ such that 
\(\vec{a}_{j+1} \le u_r^{\star} \le \vec{a}_j.\)
Define 
\[i_L=\min\!\left(|\mathcal{I}(\vect{a},u_L,+)|,\,k\right), \quad
i_C=\min\!\left(|\mathcal{I}(\vect{a},u_C,+)|,\,k\right), \quad
i_R=|\mathcal{I}(\vect{a},u_R,+)|.\]
When $u_R<+\infty$ and $u_L>\vec{a}_{k+1}$, we have 
\(u_r^{\star} \in [u_L,u_R] \subset (\vec{a}_{i_L+1},\,\vec{a}_{i_R}]\) and
\(0 \le i_R < i_L+1 \le k+1.\)
The Pivot step repeatedly updates $i_R$ and $i_L$ until they stabilize.
\begin{itemize}
    \item \textbf{Lines 9–15.} 
    If $F_r'((u_C)^{-}) = F_r'((u_L)^{-})$ (equivalently $i_C = i_L \le k$), the step either terminates or returns $i_C \le i_L$. 
    In the case $i_C=i_L$, we obtain $u_C = \vec{a}_{i_L}$.  
    
    If $F_r'((u_C)^{-}) = F_r'((u_R)^{-})$ (i.e., $i_R = i_C < k$), the algorithm again either terminates or returns $i_R < i_C$. 
    
    If neither condition is met, then the relation 
    \(i_R < i_C < i_L\)
    persists without change.
    \item \textbf{Lines 16–22.} 
    Since $u_C$ is reassigned to either $u_L$ or $u_R$, one of the indices $i_L$ or $i_R$ must change whenever 
    \(i_R < i_C < i_L.\)
    The only unresolved situation is $i_C = i_L$, which triggers Line~10 and forces $u_C = \vec{a}_{i_L}$. 
    After Line~23, this leads to 
    \(i_C < i_L\) and either $i_L$ or $i_R$ changes in the next iteration.
\end{itemize}

In conclusion, in each iteration of the Pivot step, we obtain either $i_R < i_C$ or $i_C < i_L$. After reassigning $u_C$, either $i_L$ increases or $i_R$ decreases. Since $0 \leq i_R < i_L \leq k$, this step terminates within at most $k$ iterations.

\textbf{Exact step.} 
Define
$
i_G(u) = |\I(\vect{a}, G_r(u), u)|.
$
After the Pivot step, we have $\Vec{a}_{j+1} \leq u_L \leq u_r^{\star} \leq \Vec{a}_j$ and $i_G(u_L) \leq i_G(u_r^{\star})$. Since $D(u)$ is piecewise linear, convex on $[\Vec{a}_{j+1}, \Vec{a}_{j}]$, and positive on $[\Vec{a}_{j+1}, u_r^{\star})$, the updating rule ensures that $u_L$ moves towards $u_r^{\star}$. Consequently, this step either terminates when $i_G(u_L) = i_G(u_r^{\star})$ or leads to a strict increase in $i_G(u_L)$. Since $i_G(u) \leq n$, the Exact step terminates within at most $n$ iterations. 

\textbf{Overall complexity. }Combining the bounds from the three steps, Algorithm~\ref{alg: EIPS} terminates within at most $(n+k)$ iterations. In addition, since the worst-case complexity for calculating $G_r(u)$ is $O(n^2)$ in both the Pivot and Exact steps, the worst-case total complexity for Algorithm~\ref{alg: EIPS} is $O((n+k)n^2)$. 
\end{proof}


Although the theoretical worst-case complexity of $O((n+k)n^2)$ established above is higher than that of previously developed methods, such as~\cite[Algorithms 1 and 3]{roth2025n} and the algorithm proposed in~\cite{luxenberg2025operator}, this bound is highly conservative and primarily reflects pathological instances. In practice, the updating rules introduced in the Pivot step, combined with the sequence filtering techniques introduced in Section~\ref{subsec: ACC tricks}, dynamically and rapidly reduces the active problem size. Because these acceleration tricks successfully filter out unnecessary elements and eliminate redundant computations, the actual number of iterations observed in our numerical tests is significantly smaller than the theoretical maximum derived here. Our numerical evaluations in Section~\ref{sec: experiment} confirm this behavior: despite the pessimistic theoretical bound, Algorithm~\ref{alg: EIPS} ultimately operates with an empirical complexity of $O(n)$.

\section{Numerical experiment}
\label{sec: experiment}
In this section, we evaluate the numerical performance of EIPS for solving the projection problem~\eqref{opt}. All numerical experiments are implemented by running Julia 1.10.2 on a MacBook Pro laptop under OS 13.0 with Apple M2 Pro and 16GB RAM.

For consistency and comparability, we adopt the experimental setup introduced in~\cite{roth2025n}. Specifically, the projection parameters are defined as $k = \tau_k \cdot n$ and $r = \tau_r \cdot \T_{(k)}(\vect{a})$, where the input sequence $\vect{a}$ are generated uniformly from $[0,1]^n$. The parameter grids are chosen as follows:
\begin{itemize}
    \item $\tau_k \in \left\{\frac{1}{10000}, \frac{1}{1000}, \frac{1}{100}, \frac{1}{20}, \frac{1}{10}, \frac{2}{10}, \frac{3}{10}, \frac{4}{10}, \frac{5}{10}, \frac{6}{10}, \frac{7}{10}, \frac{8}{10}, \frac{9}{10}\right\}$. \\[0.5pt]
    \item $\tau_r \in \left\{0, \frac{1}{10}, \frac{2}{10}, \frac{3}{10}, \frac{4}{10}, \frac{5}{10}, \frac{6}{10}, \frac{7}{10}, \frac{8}{10}, \frac{9}{10}, \frac{99}{100},\frac{999}{1000}, \frac{10}{10}, \frac{101}{100}, \frac{11}{10}, \frac{12}{10}, \frac{15}{10}, \frac{20}{10}\right\}$. 
\end{itemize}
The stopping criterion for EIPS is set to $\epsilon=10^{-8}$. We generated 100 random input vectors unless stated otherwise and independently executed the algorithm on each instance. The reported performance metrics are based on the mean computation time across these trials. 

\subsection{Initial points selecting} 

\begin{figure}
\centering
{
\resizebox*{0.48 \textwidth}{!}{\includegraphics{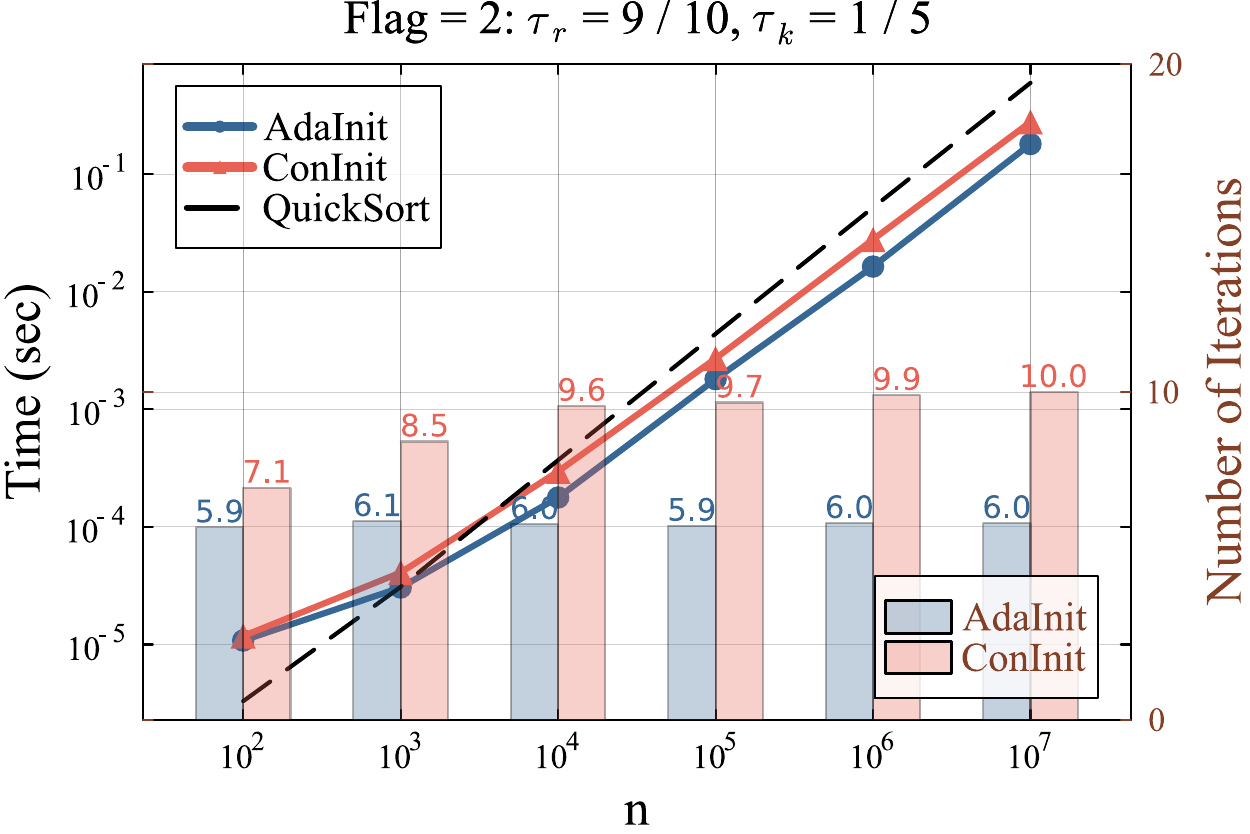}}}\hfill
{
\resizebox*{0.48 \textwidth}{!}{\includegraphics{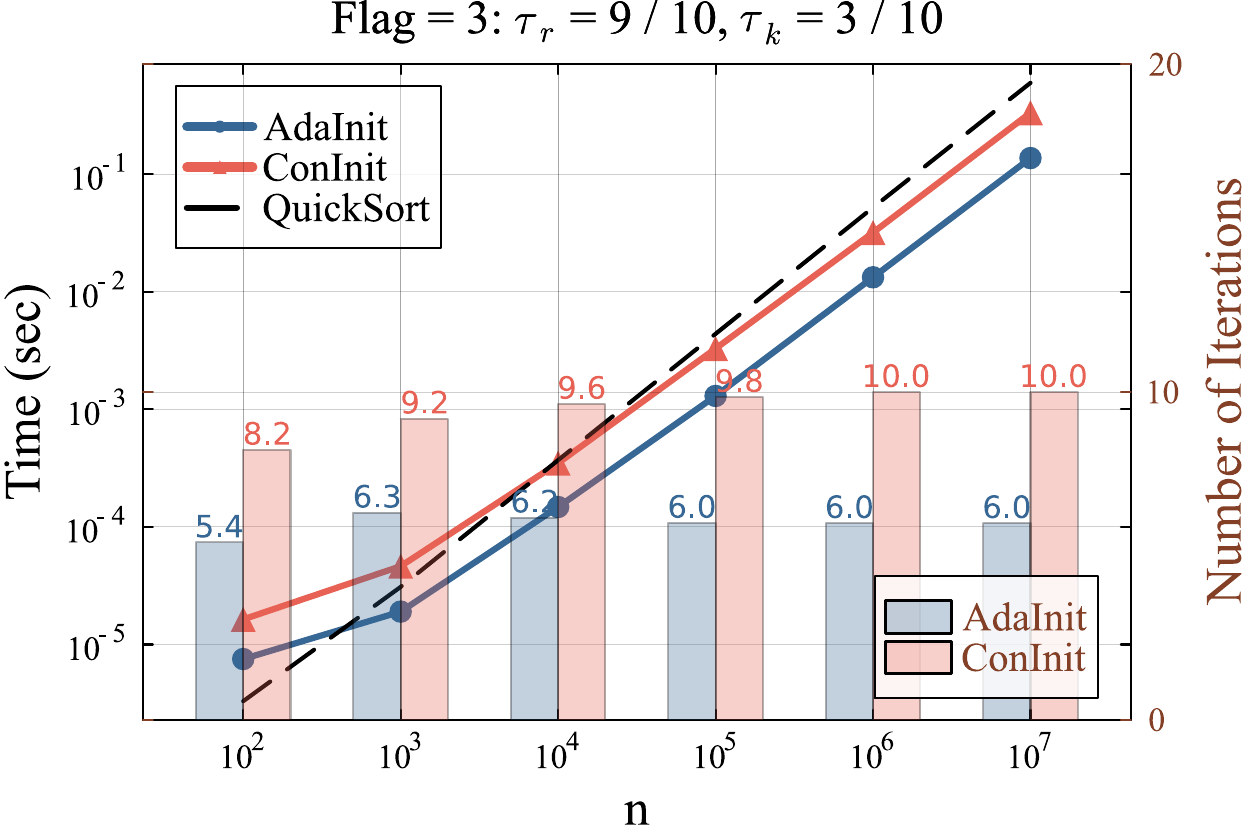}}}
\caption{Mean computation time and the number of iterations under different $\textsc{Flag}$ averaged over 100 instances, where the values of $\textsc{Flag}$ in the title are computed over 100 instances under the same $\tau_r$, $\tau_k$ on the input size from $10^2$ to $10^7$. The line chart corresponds to the left $y$-axis labeled `Time (sec)', while the bar chart corresponds to the right $y$-axis labeled `Number of Iterations'. The `Number of Iterations' represents the total iterations executed across all three steps of Algorithm~\ref{alg: EIPS}.} \label{fig: initial points comparing}
\end{figure}

We begin by examining the impact of initial point selection in the Initialization step of EIPS under the regime $r<\T_{(k)}(\vect{a})$ for \textsc{Flag} = 1, 2, 3. Specifically, we compare the following two strategies for selecting the initial points: 
\begin{itemize}
    \item $\texttt{AdaInit}$: Initial points are selected adaptively according to the \textsc{Flag} values in the Initialization step.
    \item $\texttt{ConInit}$: Initial points are selected consistently across all settings. In particular, we adopt the construction method in~\eqref{equ: initial points generating case 1} used for \textsc{Flag} = 1. 
\end{itemize}
All other components of EIPS remain unchanged, including the acceleration techniques introduced in Section~\ref{subsec: ACC tricks}.

Figure~\ref{fig: initial points comparing} shows the mean computation time and the number of iterations between these two strategies across three different \textsc{Flag} values. The results show that selecting proper initial points significantly improves performance. In particular, for $\textsc{Flag} = 2,3$, the \texttt{AdaInit} strategy consistently outperforms the \texttt{ConInit} strategy in a shorter time and a smaller number of iterations, highlighting the benefit of adaptive initialization. Moreover, we also observe that the total numbers of iterations for both strategies are much smaller than the theoretical worst-case bound of $2k+n$ shown in Theorem~\ref{thm: convergence}. This provides further evidence of the practical efficiency of our algorithm. Finally, when compared with the time required by quicksort, both initialization strategies are faster for $n \geq 10^4$, reflecting the benefit of avoiding sorting in EIPS. 

Figure~\ref{fig: time distribution} further illustrates how the \texttt{AdaInit} strategy reduces the computation time based on the time distribution across the three steps for three \textsc{Flag} values at $n=10^7$. Specifically, notable reductions in computation time are observed within the Pivot step and the Initialization step for $\textsc{Flag} = 2, 3$. Two primary factors contribute to these improvements. First, the $\texttt{AdaInit}$ strategy provides initial points that are closer to $u_r^{\star}$, allowing for a better initial lower bound $\rho$ when calculating $G_r(u)$, thus reducing computational effort. Second, a closer initial point to $u_r^{\star}$ facilitates improved filtering of the sequences $\vect{a}_f$ and $\vect{a}_g$, further decreasing the subsequent computational overhead involved in evaluating $F_r(u)$ and $G_r(u)$. 


\begin{figure}
\centering
{
\resizebox*{0.31 \textwidth}{!}{\includegraphics{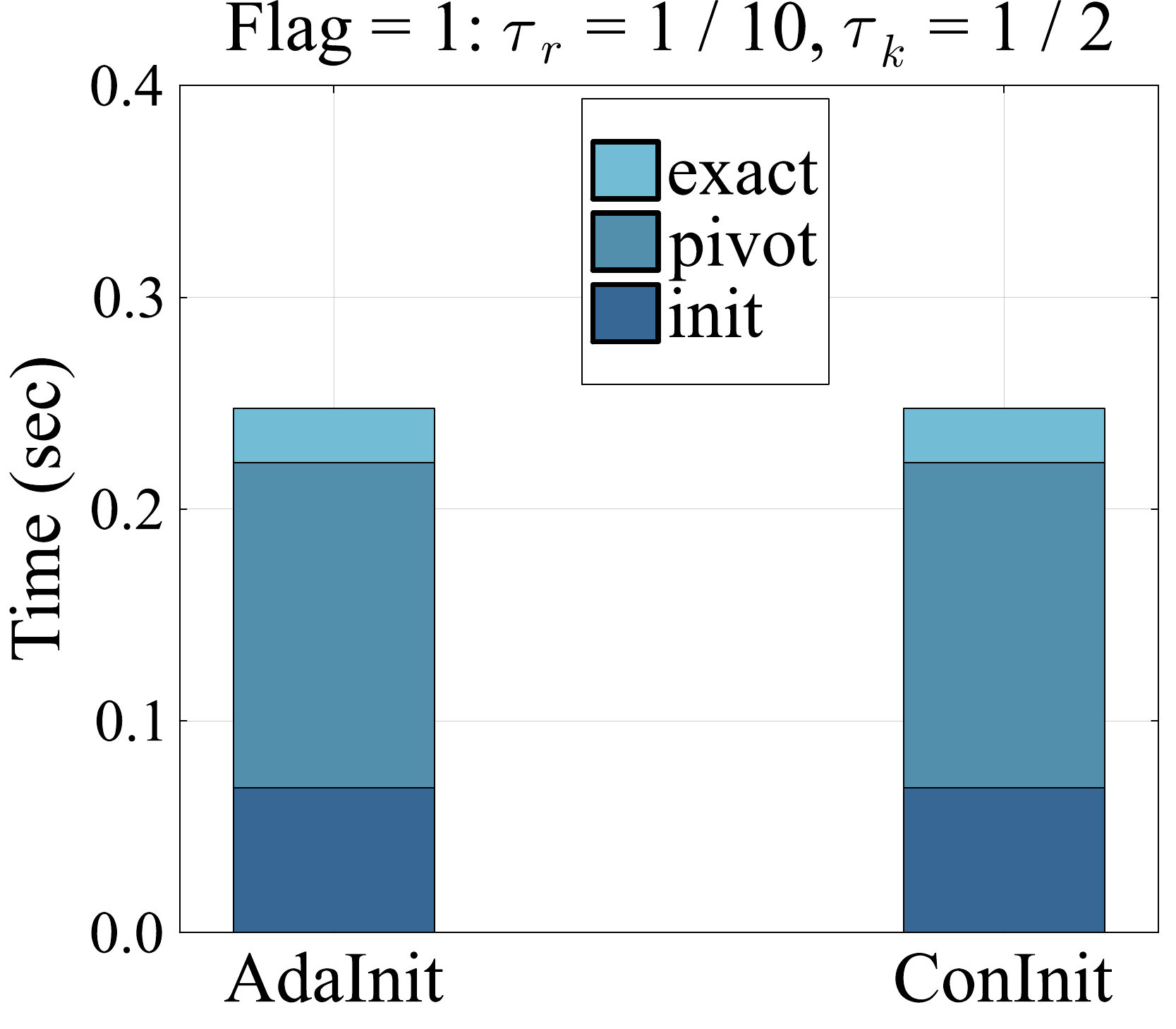}}}\hfill
{
\resizebox*{0.31 \textwidth}{!}{\includegraphics{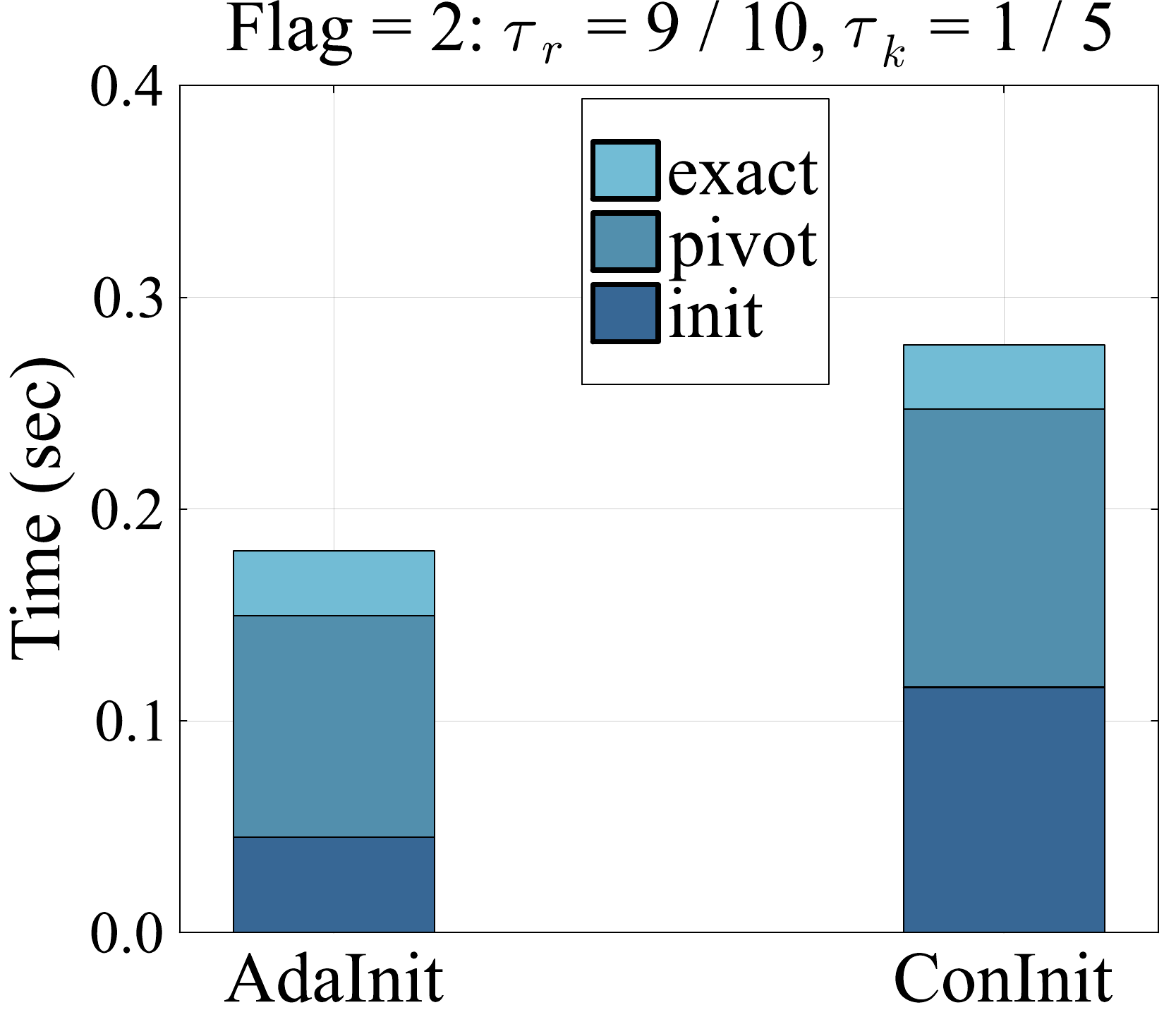}}}\hfill
{
\resizebox*{0.31 \textwidth}{!}{\includegraphics{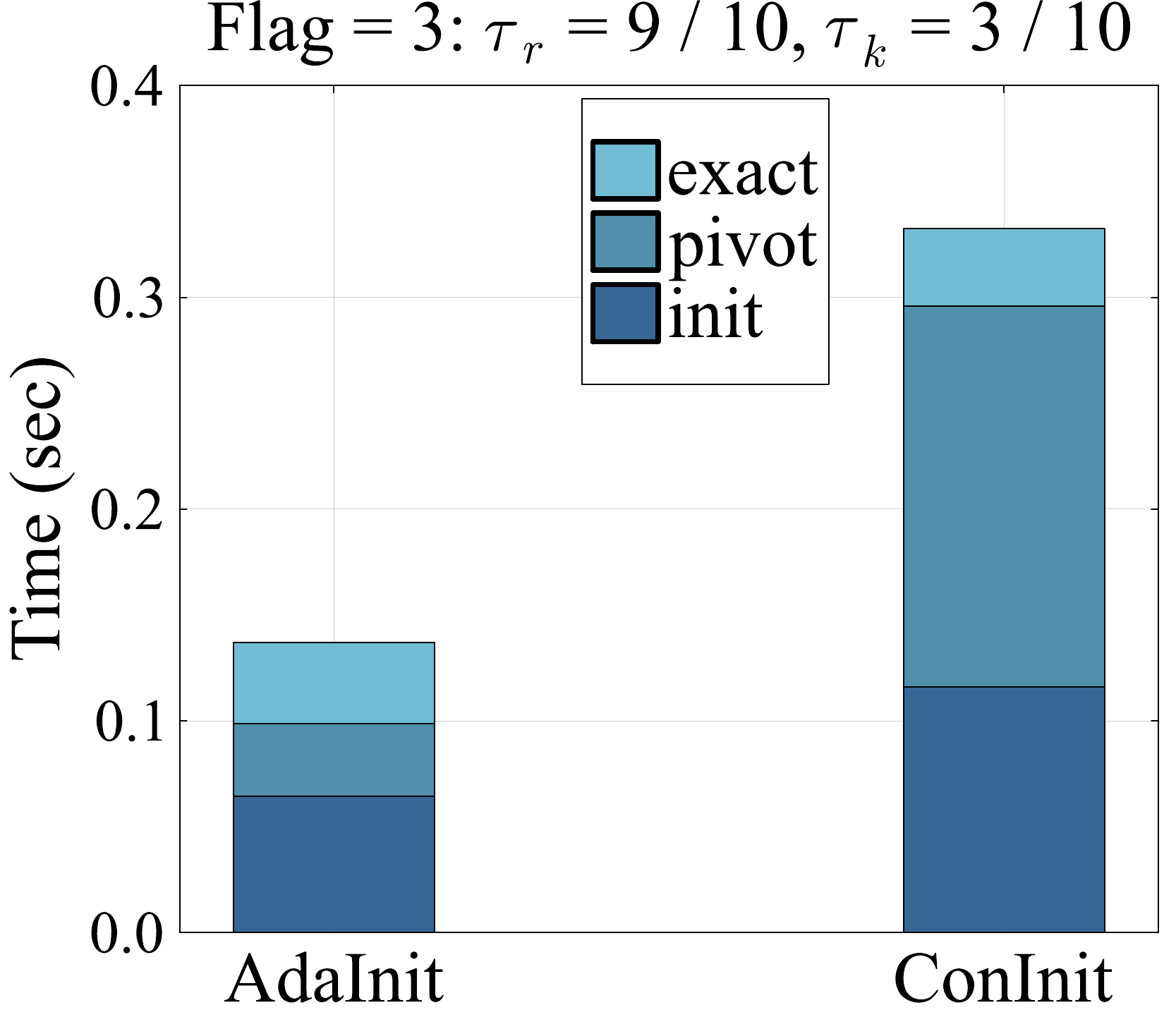}}}
\caption{Mean computation time spent in each step averaged over 100 instances at $n=10^7$.} \label{fig: time distribution}
\end{figure}

\begin{figure}[!ht]
\centering
{
\resizebox*{0.48 \textwidth}{!}{\includegraphics{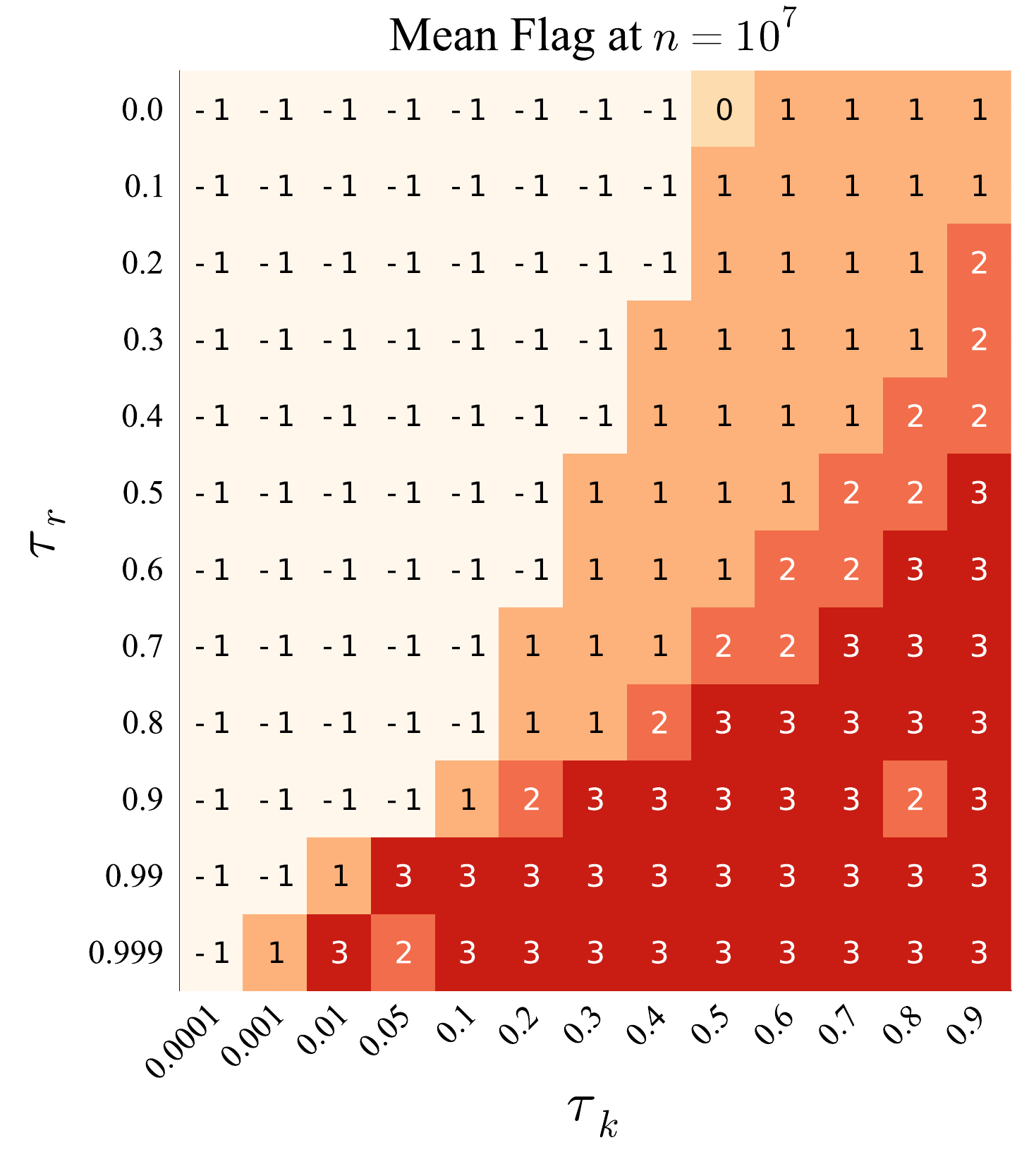}}}\hfill
{
\resizebox*{0.48 \textwidth}{!}{\includegraphics{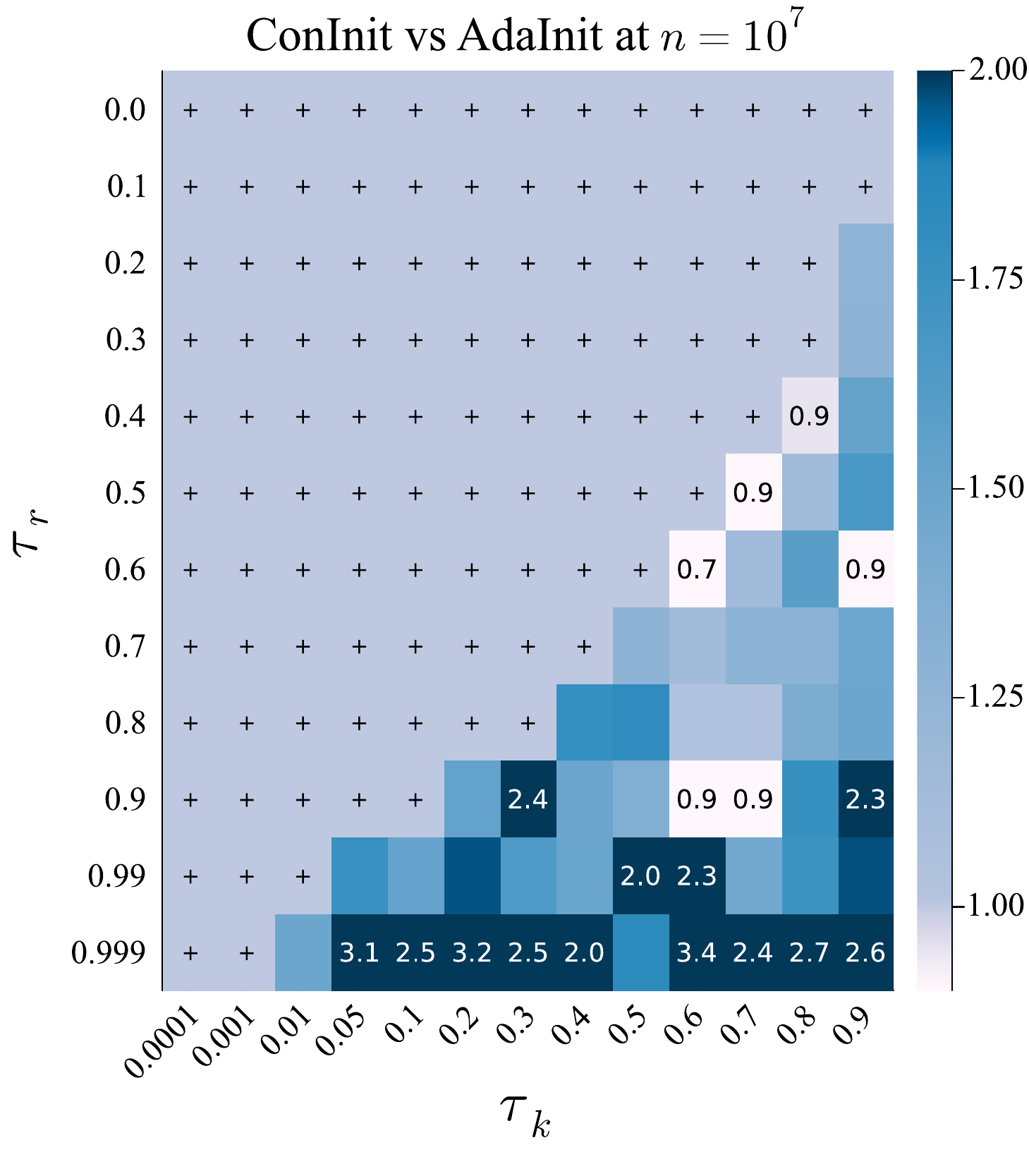}}}
\caption{(\textbf{Left}): Mean $\textsc{Flag}$ values for $\texttt{AdaInit}$ strategy at $n=10^7$ averaged over 100 instances. (\textbf{Right}): Relative computation time between $\texttt{ConInit}$ and $\texttt{AdaInit}$ strategy at $n=10^7$ averaged over 100 instances. The `+' markers indicate parameter pairs for which the corresponding mean \textsc{Flag} value equals -1, 0, or 1. A value of $c>1$ indicates $\texttt{AdaInit}$ is $c$ times faster than $\texttt{ConInit}$ strategy, while a ratio $c < 1$ indicates that $\texttt{AdaInit}$ is slower, taking $1/c$ times as long. For visualization purposes, values of $c>2$ and $c<1$ are capped at 2 and 1 respectively, and these values are shown in the heatmap.} \label{fig: initial points comparing (heatmap)}
\end{figure}

Figure~\ref{fig: initial points comparing (heatmap)} compares the relative performance between these two strategies and shows corresponding mean \textsc{Flag} values across the entire spectrum of the parameters $r$ and $k$ at $n=10^7$. It highlights that $\texttt{AdaInit}$ strategy is faster than the other one in most cases when $\textsc{Flag}=2,3$.

Given these observations, we choose \texttt{AdaInit} strategy to select initial points for our proposed Algorithm~\ref{alg: EIPS} for later experiments unless stated otherwise. 

\subsection{Practical complexity}
\label{subsec: practical complexity}

In this experiment, we evaluate the practical complexity of EIPS. The numerical results are summarized in Figure~\ref{fig: complexity}, where slopes corresponding to the complexity are estimated using the \texttt{GLM} package in Julia. In addition, an $n \log n$ reference curve is included for comparison. These results consistently suggest an empirical complexity of $O(n)$ for both scenarios $r \geq \T_{(k)}(\vect{a})$ and $r < \T_{(k)}(\vect{a})$.

In addition to the problem dimension $n$, the parameters $\tau_r$ and $\tau_k$ also affect the runtime, although their influence is much weaker. Specifically, the upper panels of Figure~\ref{fig: complexity}, corresponding to scenarios where $r < \T_{(k)}(\vect{a})$, demonstrate that $\tau_k$ and $\tau_r$ also influence computation time. Similarly, the bottom panels ($r \geq \T_{(k)}(\vect{a})$) illustrate that while decreasing the value of $k$ leads to reduced computation times for a fixed problem dimension $n$. Additionally, an increase in $\tau_r$ reduces the computational effort, since an increasing $\tau_r$ make it easier to find the intersection between the function $F_r(u)$ and $l=u$.

Notably, in the scenario $\tau_r = 1.0$ and $\tau_k = 0.0001$, the initially estimated slope (approximately 1.1) appears relatively high. However, upon focusing on larger input sizes ranging from $10^6$ to $10^8$, the slope estimate adjusts to approximately 1.02, indicating $O(n)$ complexity. Further details on this particular case $\tau_r=1.0$ are provided in Appendix~\ref{app: tau_r=1}. 

\begin{figure}[t]
    \centering
    \includegraphics[width=\linewidth]{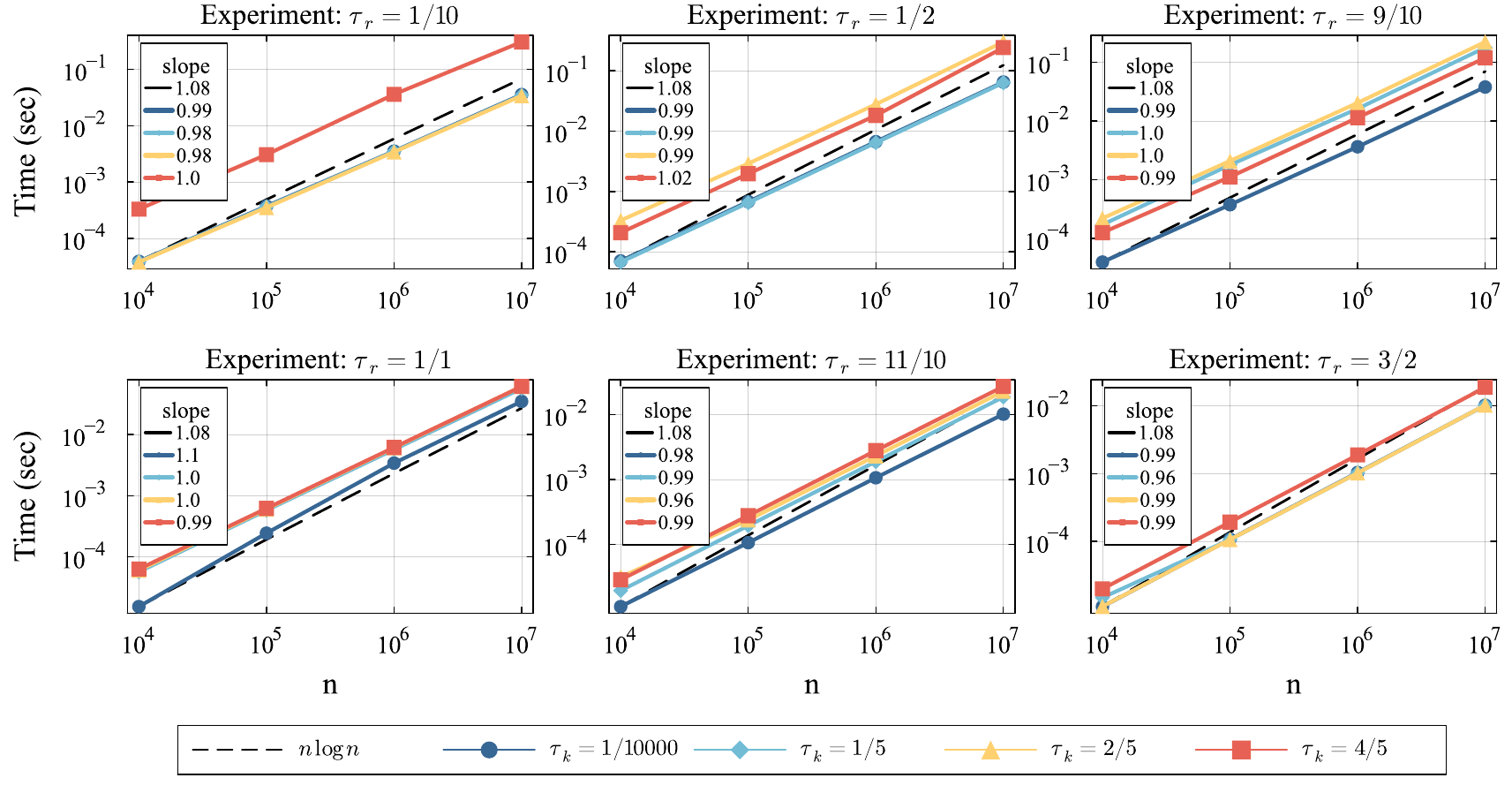}   
    \caption{Mean computation time under different $\tau_r$ and $\tau_k$ averaged over 100 instances. The upper figures indicate $r<\T_{(k)}(\vect{a})$ and the bottom figures indicate $r\geq\T_{(k)}(\vect{a})$. The legend $\emph{slope}$ represents the slope estimated using $\texttt{GLM}$ in Julia, where an approximate value of $1$ indicates $O(n)$ complexity. }
    \label{fig: complexity}
\end{figure}

\subsection{Time comparing} 
We compare the performance of EIPS against several baseline algorithms: MOVE~\cite{luxenberg2025operator}, ESGS~\cite{roth2025n}, PCLP~\cite{roth2025n}, GRID~\cite{wu2014moreau}, and GURO, the latter of which solves the sorted formulation using Gurobi 11.0.0 with feasibility and optimality tolerances set to $10^{-9}$. A maximum computation time of 3000 seconds is enforced for each instance across all algorithms.


Except for EIPS, all baseline methods rely on partial sorting to initially determine whether $r > \T_{(k)}(\vect{a})$, and on full sorting for solution computation. To provide a more conservative reference point and to tighten the empirical comparison, we adopt the following benchmarking protocol:
\begin{itemize}
    \item When $r \geq \T_{(k)}(\vect{a})$, the optimal solution $\vect{x}^{\star} = \vect{a}$ is identified immediately following the initial partial sort. 
    \item When $r < \T_{(k)}(\vect{a})$, we exclude the partial sorting time by assuming the feasibility status is known a priori. Therefore, The reported time only account for the full sort and all subsequent operations required to recover $\vect{x}^{\star}$.
\end{itemize}
In our experiments, this complete sorting step is implemented using the QuickSort algorithm in Julia. The following results show the performance for both $r \geq \T_{(k)}(\vect{a})$ and $r < \T_{(k)}(\vect{a})$.

{\bf Regime $r \geq \T_{(k)}(\vect{a})$.} 
Since all baseline algorithms employ partial sorting to determine feasibility, we focus our comparison on EIPS and one representative baseline, ESGS. The comparative results are summarized in Figure~\ref{fig: time compare nonactive} and Table~\ref{tab: time compare nonactive}.

\begin{figure}[b]
    \centering
    \includegraphics[width=\linewidth]{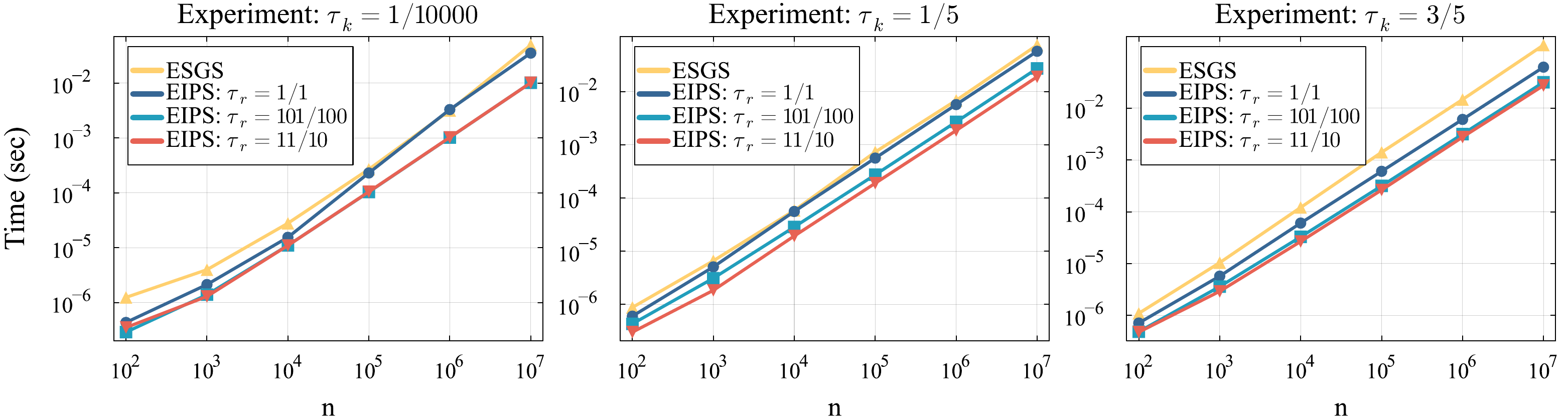}   
    \caption{Mean computation time under $r \geq \T_{(k)}(\vect{a})$ averaged over 100 instances. Since all algorithms except for EIPS use $\texttt{partialsort}$ to determine $r \geq \T_{(k)}(\vect{a})$, here we only show the comparison between ESPS and EIPS.}
    \label{fig: time compare nonactive}
\end{figure}

Figure~\ref{fig: time compare nonactive} shows that EIPS generally achieves superior computational efficiency over ESGS across a range of input sizes $n$ and ratios $\tau_r$. In addition, as discussed in Section~\ref{subsec: practical complexity}, increasing $r$ enhances the relative performance of EIPS over ESGS. While EIPS exhibits slightly longer runtime than ESGS in a few instances when $\tau_r = 1$, such cases are rare in practice. Moreover, when $r$ is only marginally greater than $\T_{(k)}(\vect{a})$, such as $\tau_r = 101/100$, EIPS consistently outperforms ESGS. 

Table~\ref{tab: time compare nonactive} further highlights the performance between EIPS and ESGS across varying values of $k$. A key observation is that for fixed $\tau_r$ and $n$, the relative efficiency of EIPS improves as $k$ increases. This growing advantage is primarily due to ESGS's dependence on partial sorting, which incurs a computational cost of $O(n +k\log n)$. For example, under $\tau_r = 11/10$ and $n = 10^7$,  EIPS is 2.3 times faster than ESGS when $\tau_k = 1/10000$ and this speedup increases to over 4 times when $\tau_k = 3/5$. 

\begin{table}[!t]
\centering
\caption{Mean computation time (standard deviation) in seconds under $r \geq \T_{(k)}(\vect{a})$ averaged over 100 instances with $r = \tau_r \cdot \T_{(k)}(\vect{a})$ and $k = \tau_k \cdot n$.}
\resizebox{\textwidth}{!}{
\begin{threeparttable}
\begin{tabular}{cllll}
    \toprule
    {\multirow{2}{*}{ }} & \multicolumn{4}{c}{Experiment: $\tau_r = 1/1,\ \tau_k = 1/10000$} \\ 
    \cmidrule{2-5} 
    ~ & $n=10^4$ & $n=10^5$ & $n=10^6$ & $n=10^7$  \\ 
    \cmidrule{1-5} 
    EIPS & \textbf{1.74e-5} (2.34e-6) & \textbf{2.67e-4} (3.47e-5) & \textbf{3.84e-3} (2.44e-4) & \textbf{4.04e-2} (1.77e-3) \\
    ESGS & 3.05e-5 (5.83e-6) & 3.19e-4 (5.80e-5) & 3.87e-3 (1.23e-3) & 5.62e-2 (5.63e-2)\\ 
    \cmidrule{1-5} 
    {\multirow{2}{*}{ }} & \multicolumn{4}{c}{Experiment: $\tau_r = 1/1,\ \tau_k = 1/5$} \\ 
    \cmidrule{2-5} 
    ~ & $n=10^4$ & $n=10^5$ & $n=10^6$ & $n=10^7$  \\ 
    \cmidrule{1-5} 
    EIPS & \textbf{5.76e-5} (1.14e-5) & \textbf{6.00e-4} (2.23e-5) & \textbf{6.21e-3} (3.33e-4) & \textbf{6.23e-2} (1.07e-3) \\
    ESGS & 6.10e-5 (8.80e-6) & 7.85e-4 (7.02e-4) & 7.49e-3 (5.58e-4) & 8.16e-2 (7.04e-3) \\ 
    \cmidrule{1-5} 
    {\multirow{2}{*}{ }} & \multicolumn{4}{c}{Experiment: $\tau_r = 1/1,\ \tau_k = 3/5$} \\ 
    \cmidrule{2-5} 
    ~ & $n=10^4$ & $n=10^5$ & $n=10^6$ & $n=10^7$  \\ 
    \cmidrule{1-5} 
    EIPS & \textbf{6.22e-5} (4.90e-6) & \textbf{6.42e-4} (2.29e-5) & \textbf{6.52e-3} (2.01e-4) & \textbf{6.72e-2} (3.80e-3) \\
    ESGS & 1.24e-4 (1.49e-5) & 1.46e-3 (9.40e-4) & 1.54e-2 (2.12e-3) & 1.72e-1 (9.65e-3) \\ 
    \cmidrule{1-5} 
    {\multirow{2}{*}{ }} & \multicolumn{4}{c}{Experiment: $\tau_r = 11/10,\ \tau_k = 1/10000$} \\ 
    \cmidrule{2-5} 
    ~ & $n=10^4$ & $n=10^5$ & $n=10^6$ & $n=10^7$  \\ 
    \cmidrule{1-5} 
    EIPS & \textbf{1.36e-5} (2.21e-6) & \textbf{1.51e-4} (2.07e-5) & \textbf{1.55e-3} (1.62e-4) & \textbf{1.51e-2} (5.60e-4) \\
    ESGS & 3.18e-5 (6.82e-6) & 3.28e-4 (5.16e-5) & 3.87e-3 (1.22e-3) & 5.62e-2 (5.64e-2)    \\ 
    \cmidrule{1-5} 
    {\multirow{2}{*}{ }} & \multicolumn{4}{c}{Experiment: $\tau_r = 11/10,\ \tau_k = 1/5$} \\ 
    \cmidrule{2-5} 
    ~ & $n=10^4$ & $n=10^5$ & $n=10^6$ & $n=10^7$  \\ 
    \cmidrule{1-5} 
    EIPS & \textbf{2.20e-5} (3.04e-6) & \textbf{2.37e-4} (1.96e-5) & \textbf{2.35e-3} (1.72e-4) & \textbf{2.41e-2} (4.84e-3) \\
    ESGS & 6.21e-5 (8.79e-6) & 7.92e-4 (7.01e-4) & 7.53e-3 (5.29e-4) & 8.17e-2 (7.13e-3)\\ 
    \cmidrule{1-5} 
    {\multirow{2}{*}{ }} & \multicolumn{4}{c}{Experiment: $\tau_r = 11/10,\ \tau_k = 3/5$} \\ 
    \cmidrule{2-5} 
    ~ & $n=10^4$ & $n=10^5$ & $n=10^6$ & $n=10^7$  \\ 
    \cmidrule{1-5} 
    EIPS & \textbf{2.91e-5} (3.24e-6) & \textbf{3.09e-4} (2.63e-5) & \textbf{3.27e-3} (3.74e-4) & \textbf{3.23e-2} (7.76e-4) \\
    ESGS & 1.24e-4 (1.51e-5) & 1.47e-3 (9.38e-4) & 1.53e-2 (2.12e-3) & 1.72e-1 (9.58e-3)\\ 
    \bottomrule
\end{tabular}
\begin{tablenotes}
    \footnotesize               
    \item The fastest algorithm is listed in bold. 
\end{tablenotes}
\end{threeparttable}
}
\label{tab: time compare nonactive}
\end{table}

\begin{figure}[!ht]
    \centering
    \includegraphics[width=\linewidth]{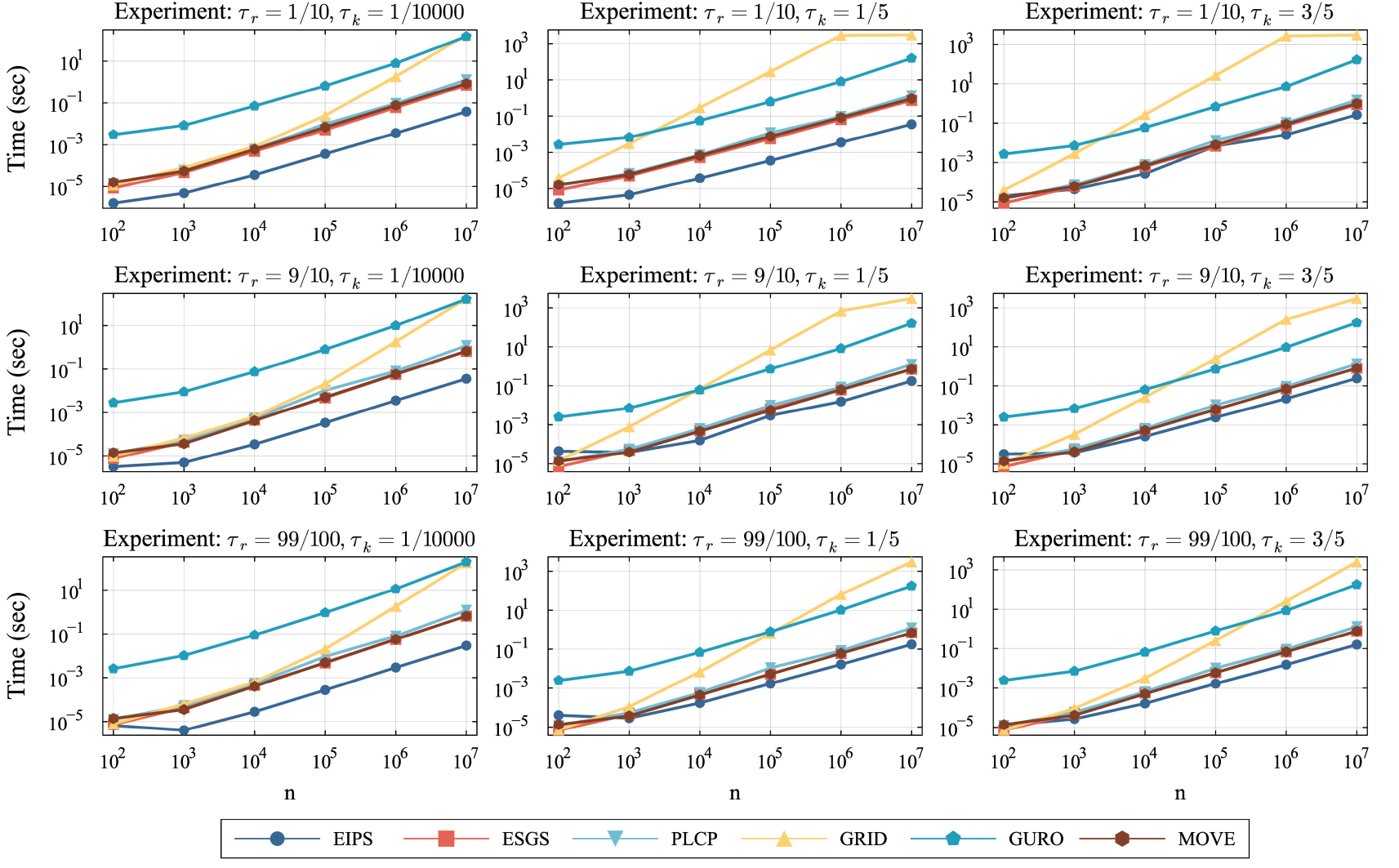}   
    \caption{Mean computation time under $r < \T_{(k)}(\vect{a})$ averaged over 100 instances, except for methods GRID and GRBU with $n \in \{10^6, 10^7\}$ in which a time-limit of $3000$ seconds is imposed across 2 instances.}
    \label{fig: time compare active}
\end{figure}

{\bf Regime $\bf r < \T_{(k)}(\vect{a})$.}
The numerical results of $r < \T_{(k)}(\vect{a})$ are summarized in Figure~\ref{fig: time compare active} and Table~\ref{tab: time compare active}. 

Across all large-scale instances ($n \geq 10^3$), EIPS consistently outperforms the baseline algorithms, with its advantages becoming increasingly pronounced as $n$ increases. As shown in Figure~\ref{fig: time compare active}, the runtime of EIPS exhibits practical $O(n)$ complexity as discussed in Section~\ref{subsec: practical complexity}. In contrast, the baseline algorithms incur higher complexity due to their reliance on sorting: MOVE, ESGS, PLCP, and GURO require $O(n\log n)$ computational cost, while GRID exhibits quadratic scaling due to its $O(k(n-k))$ complexity. Notably, MOVE and ESGS exhibit nearly identical performance across all tested scenarios. For problem size where $n \in \{10^6, 10^7\}$, the computational time of GRID extends into the range of minutes or even hours, whereas EIPS consistently completes in fractions of a second.

Table~\ref{tab: time compare active} provides detailed numerical comparisons across various $\tau_r$ and $\tau_k$ configurations. EIPS demonstrates consistent superiority over all baselines in every tested setting. For instance, under the parameter setting $\tau_r=1/10$ and $\tau_k = 1/10000$ (corresponding to \textsc{Flag} = –1), EIPS is 17 times faster than ESGS and about 4000 times faster than GURO at $n=10^7$. In another configuration $\tau_r=99/100$ and $\tau_k=3/5$, EIPS is roughly 2.5 times faster than ESGS at $n=10^4$, and this advantage increases to a factor of 4 at $n=10^7$. Moreover, we observe that the time required solely for sorting exceeds the time spent on searching the solution procedure in ESGS and MOVE in some cases and is even greater than the total runtime of EIPS. This reinforces the core advantage of EIPS: by eliminating costly preprocessing steps such as sorting, it achieves high computational efficiency and it is particularly well-suited for large-scale problems. 

{\bf Comprehensive performance comparison between EIPS and ESGS, PLCP.} Figure~\ref{fig: time compare heatmap} compares the performance of EIPS relative to ESGS (left) and PLCP (right) across the full range of parameters $k$ and $r$. The results demonstrate that EIPS consistently outperforms both ESGS and PLCP across nearly the entire parameter space. Under $r < \T_{(k)}(\vect{a})$, EIPS is at least 1.5 times faster than ESGS and at least 3 times faster than PLCP. In some configurations, this advantage increases substantially, exceeding 20 times against ESGS and 40 times against PLCP. Under $r = \T_{(k)}(\vect{a})$ and small $k$, the performance difference between EIPS and the baselines narrows, and in a few rare cases, EIPS is marginally slower. However, as $r$ becomes significantly larger than $\T_{(k)}(\vect{a})$, the performance advantage of EIPS emerges again and becomes increasingly prominent. As MOVE shows behavior similar to ESGS, we place the comprehensive comparison between EIPS and MOVE in Appendix~\ref{app: EIPS vs MOVE}.

\begin{table}[!t]
\centering
\caption{Mean computation time (standard deviation) in seconds under $r<\T_{(k)}(\vect{a})$ averaged over 100 instances with $r = \tau_r \cdot \T_{(k)}(\vect{a})$ and $k = \tau_k \cdot n$.}
\resizebox{\textwidth}{!}{
\begin{threeparttable}
\begin{tabular}{cllll}
    \toprule
    {\multirow{2}{*}{ }} & \multicolumn{4}{c}{Experiment: $\tau_r = 1/10,\ \tau_k = 1/10000$} \\ 
    \cmidrule{2-5} 
    ~ & $n=10^4$ & $n=10^5$ & $n=10^6$ & $n=10^7$  \\ 
    \cmidrule{1-5}
    EIPS & \textbf{3.66e-5} (2.73e-6) & \textbf{3.66e-4} (2.34e-5) & \textbf{3.58e-3} (1.62e-4) & \textbf{3.81e-2} (5.75e-3) \\
    ESGS & 4.59e-4 (3.11e-4) & 4.85e-3 (3.73e-4) & 5.65e-2 (1.56e-3) & 6.61e-1 (3.55e-2) \\ 
    MOVE & 5.79e-4 (1.45e-5) & 6.58e-3 (7.70e-4) & 7.34e-2 (3.86e-3) & 7.95e-1 (5.80e-2)\\
    PCLP & 5.92e-4 (2.37e-5) & 9.57e-3 (1.62e-2) & 9.06e-2 (4.31e-2) & 1.23e+0 (4.32e-2) \\
    GRID & 8.02e-4 (3.11e-4) & 2.48e-2 (1.65e-2) & 1.85e+0$^*$ (1.74e-2) & 1.80e+2$^*$ (6.32e-1) \\
    GURO & 7.09e-2 (1.01e-2) & 6.62e-1 (6.04e-2) & 7.89e+0$^*$ (1.20e-1) & 1.51e+2$^*$ (2.50e-1) \\ 
    \cmidrule{1-5}
    {\multirow{2}{*}{ }} & \multicolumn{4}{c}{Experiment: $\tau_r = 9/10,\ \tau_k = 1/5$} \\ 
    \cmidrule{2-5}
    ~ & $n=10^4$ & $n=10^5$ & $n=10^6$ & $n=10^7$  \\ 
    \cmidrule{1-5}
    EIPS & \textbf{1.59e-4} (2.04e-5) & \textbf{3.00e-3} (1.29e-2) & \textbf{1.54e-2} (1.84e-3) & \textbf{1.76e-1} (8.60e-3) \\
    ESGS & 4.35e-4 (3.12e-4) & 5.90e-3 (1.31e-2) & 5.42e-2 (1.43e-3) & 6.26e-1 (1.22e-2) \\ 
    MOVE & 4.07e-4 (1.35e-5) & 4.80e-3 (1.08e-3) & 5.63e-2 (3.28e-3) & 6.40e-1 (9.92e-3) \\
    PCLP & 5.64e-4 (3.04e-4) & 9.05e-3 (1.62e-2) & 7.76e-2 (1.78e-3) & 1.17e+0 (2.56e-2) \\
    GRID & 6.90e-2 (3.07e-3) & 6.81e+0 (9.66e-2) & 6.87e+2$^*$ (1.19e+0) & - (-)\\
    GURO & 6.17e-2 (1.01e-2) & 7.26e-1 (5.99e-2) & 8.20e+0$^*$ (5.03e-1) & 1.58e+2$^*$ (3.34e+0) \\ 
    \cmidrule{1-5}
    {\multirow{2}{*}{ }} & \multicolumn{4}{c}{Experiment: $\tau_r = 99/100,\ \tau_k = 3/5$} \\ 
    \cmidrule{2-5}
    ~ & $n=10^4$ & $n=10^5$ & $n=10^6$ & $n=10^7$  \\ 
    \cmidrule{1-5}
    EIPS & \textbf{1.64e-4} (2.09e-5) & \textbf{1.67e-3} (7.69e-4) & \textbf{1.53e-2} (1.53e-3) & \textbf{1.59e-1} (9.03e-3) \\
    ESGS & 4.36e-4 (3.13e-4) & 4.68e-3 (7.86e-4) & 5.46e-2 (1.51e-3) & 6.25e-1 (1.24e-2) \\ 
    MOVE & 3.81e-4 (2.20e-5) & 4.59e-3 (1.18e-3) & 5.42e-2 (4.10e-3) & 6.14e-1 (1.23e-2) \\
    PCLP & 5.19e-4 (2.37e-5) & 8.87e-3 (1.62e-2) & 7.60e-2 (1.76e-3) & 1.15e+0 (2.64e-2) \\
    GRID & 3.03e-3 (5.47e-4) & 2.54e-1 (1.06e-2) & 2.54e+1$^*$ (2.71e-1) & 2.51e+3$^*$ (3.25e-4) \\
    GURO & 6.63e-2 (1.26e-2) & 8.06e-1 (5.52e-2) & 8.54e+0$^*$ (2.59e-1) & 1.81e+2$^*$ (3.25e-4) \\ 
    \cmidrule{1-5}
    Sorting & 4.07e-4 (3.11e-4) & 4.33e-3 (9.33e-5) & 5.15e-2 (1.33e-3) & 5.95e-1 (1.18e-2) \\
    \bottomrule
\end{tabular}
\begin{tablenotes}
    \footnotesize               
    \item The fastest algorithm is listed in bold, $^*$ indicates statistics are computed over 2 instances, and $-$ indicates the algorithm achieves the time-limits. 
\end{tablenotes}

\end{threeparttable}
}
\label{tab: time compare active}
\end{table}


\begin{figure}
\centering
{
\resizebox*{0.48 \textwidth}{!}{\includegraphics{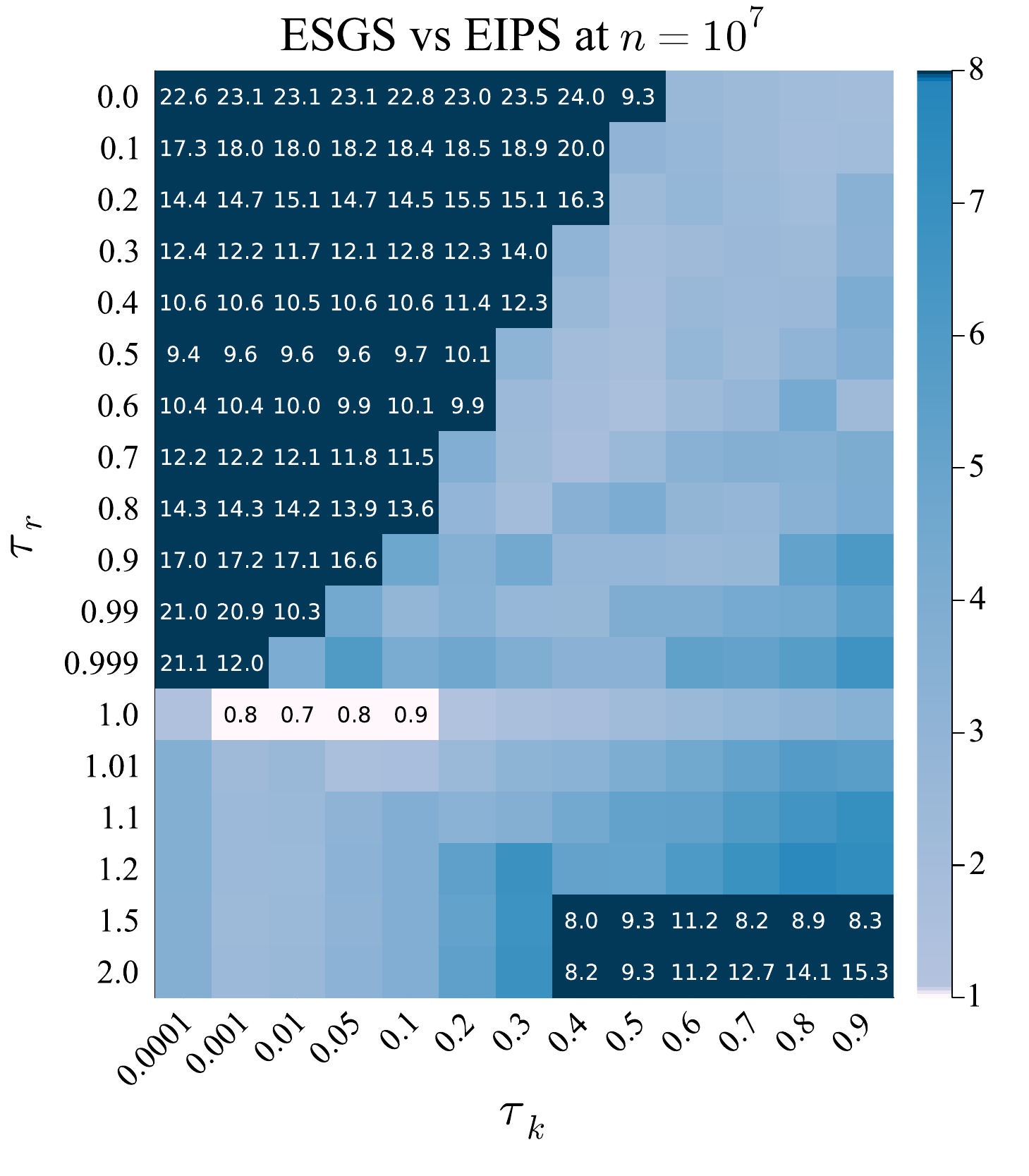}}}\hfill
{
\resizebox*{0.48 \textwidth}{!}{\includegraphics{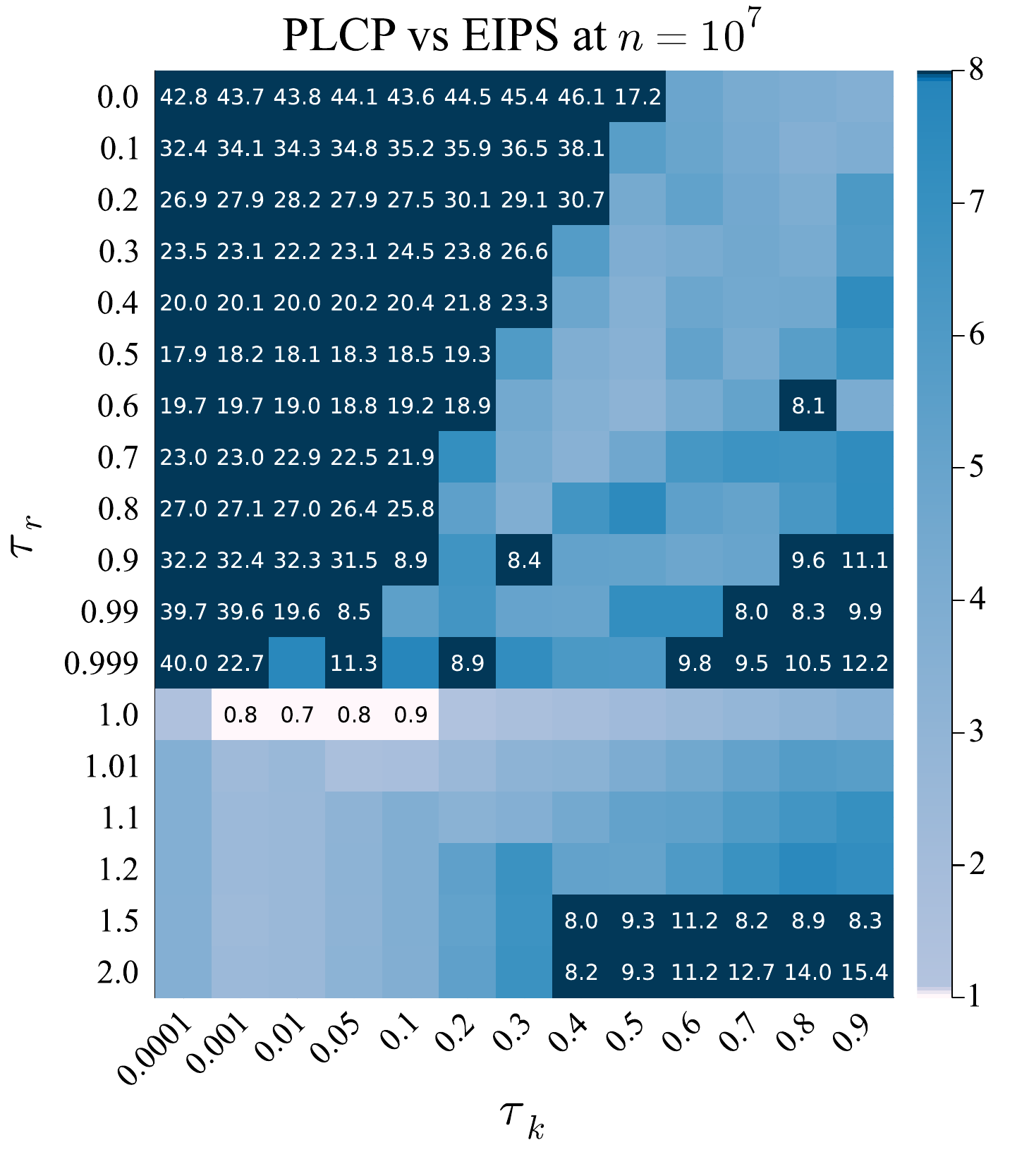}}}
\caption{Mean computation time relative to EIPS at $n=10^7$ averaged over 100 instances. A value of $c>1$ indicates EIPS is $c$ times faster than the other algorithm, while a ratio $c < 1$ indicates that EIPS is slower, taking $1/c$ times as long. For visualization purposes, values of $c>8$ and $c<1$ are capped at 8 and 1 respectively, and these values are shown in the heatmap.} \label{fig: time compare heatmap}
\end{figure}


\section{Conclusion}
We have introduced a novel projection algorithm, EIPS, to compute the Euclidean projection onto the top-$k$-sum constraint and a geometric interpretation of this algorithm. By leveraging relaxed KKT conditions, we have provided a geometric interpretation of the projection problem and have developed an iterative approach that eliminates the need for sorting, thus significantly enhancing efficiency for large-scale problems. Rigorous convergence analysis confirms that EIPS converges globally to the exact solution within a finite number of iterations. Extensive numerical experiments indicate empirical $O(n)$ complexity for EIPS. In addition, the results also demonstrate that EIPS achieves superior computational efficiency over the baseline algorithms, and its advantages become more significant as $n$ increases. Finally, future research directions include exploring the potential of applying EIPS to large-scale optimization problems related to conditional value at risk.

\newpage
\bibliographystyle{plain}
\bibliography{reference}   

\appendix

\section{KKT condition derivation for problem~\eqref{opt}}
\label{app: kkt derivation}
Without loss of generality, we assume that the initial input to the projection problem~\eqref{opt} is sorted in nonincreasing order, i.e., \( \vect{a} = \Vec{\vect{a}} \). Under this assumption, the unique solution \( \vect{x}^{\star} \) will also be sorted in nonincreasing order. Furthermore, the top-$k$-sum constraint can be relaxed to \( \sum_{i=1}^k x_i \leq r \) with \( \min\{x_i\}_{i=1}^{k-1} \geq x_k \) and \( \max\{x_j\}_{j=k+1}^n \leq x_k \). The problem is equivalent to
\begin{equation}
\label{opt-mono}
\begin{aligned}
    \min_{\vect{x} \in \R^n}\quad &\frac{1}{2}\| \vect{x} - \vect{a} \|^2 \\
    \text{subject to}\quad & x_i \geq  x_k,  \quad\forall i \leq k-1, \\
    & x_j \leq x_{k},  \quad\forall j \geq k+1, \\
    & \sum_{i=1}^k x_i \leq r,
\end{aligned}
\end{equation}

The Lagrangian function for this problem is:
\[
\mathcal{L}(x, \lambda, \alpha, \beta) = \frac{1}{2} \sum_{i=1}^n (x_i - a_i)^2 + \lambda \left( \sum_{i=1}^k x_i - r \right)
- \sum_{i=1}^{k-1} \alpha_i (x_i - x_k) + \sum_{j=k+1}^{n} \beta_j (x_j - x_k),
\]
where $\lambda \geq 0$, \( \alpha_i \geq 0 \) for \( i = 1, \dots, k-1 \), and \( \beta_j \geq 0 \) for \( j = k+1, \dots, n \) are Lagrange multipliers. 

The KKT conditions are as follows:
\begin{itemize}
    \item {Primal feasibility:}
\[
\sum_{i=1}^k x_i \leq r; \quad x_i \geq x_k\quad \forall i\leq k-1; \quad x_j\leq x_k\quad \forall j\geq k+1.
\]

\item{Dual feasibility:}
\[
\lambda \geq 0; \quad \alpha_i \geq 0 \quad\, \forall i \leq k-1; \quad \beta_j \geq 0 \quad\, \forall j \geq k+1. 
\]

\item{Stationarity:}
\begin{align}
    &x_i - a_i + \lambda - \alpha_i = 0\quad \forall i\leq k-1; \label{Sta-k-} \\
    &x_k - a_k + \lambda + \sum_{i=1}^{k-1} \alpha_i - \sum_{j=k+1}^{n} \beta_j = 0\label{Sta-k};\\
    &x_j - a_j + \beta_j = 0\quad \forall j\geq k+1. \label{Sta-k+}
\end{align}

\item{Complementary slackness:}
\[
\lambda \left( \sum_{j=1}^k x_j - r \right) = 0; \quad \alpha_i (x_i - x_k) = 0 \quad \forall i \leq k-1; \quad \beta_j (x_k - x_j) = 0 \quad \forall j \geq  k+1.
\]
\end{itemize}

Let $(\vect{x}^{\star}, \lambda^{\star}, \vect{\alpha}^{\star}, \vect{\beta}^{\star})$ satisfies the above KKT conditions, then we know from the KKT conditions:
\begin{itemize}
    \item {\bf Solve for $\alpha_i^{\star}$ for $i\leq k-1$}. If $\alpha_i^{\star}>0$, we have $x_i^{\star}=x_k^{\star}$ and condition~\eqref{Sta-k-} gives $\alpha_i^{\star}=x_k^{\star}+\lambda^{\star}-a_i>0$. If $\alpha_i^{\star}=0$, then we have $0=x_i^{\star}+\lambda^{\star}-a_i\geq x_k^{\star}+\lambda^{\star}-a_i$. Together, we have $\alpha_i^{\star}=\max\{x_k^{\star}+\lambda^{\star}-a_i,0\}$ and $x_i^{\star} = \max\{a_i - \lambda^{\star}, x_k^{\star}\}$ for all $i\leq k-1$. \\
    \item {\bf Solve for $\beta_j^{\star}$ for $j\geq k+1$}. If $\beta_j^{\star}>0$, we have $x_j^{\star}=x_k^{\star}$, and condition~\eqref{Sta-k+} gives $\beta_j^{\star}=a_j-x_k^{\star}>0$. If $\beta_j^{\star}=0$, then we have $0=x_j^{\star}-a_j\leq x_k^{\star}-a_j$. Together, we have $\beta_j^{\star}=\max\{a_j-x_k^{\star},0\}$ and $x_j^{\star} = \min\{a_j, x_k^{\star}\}$ for all $j \geq k+1$.\\ 
    \item Because $a_k\leq a_i$ for $i\leq k-1$, we have $x_k^{\star}+\lambda^{\star}-a_k\geq x_k^{\star}+\lambda^{\star}-a_i$. If $x_k^{\star}+\lambda^{\star}-a_i<0$ for all $i\leq k-1$, we have $\alpha_i^{\star}=0$ for all $i\leq k-1$. In this case, the condition~\eqref{Sta-k} shows that $x_k^{\star}+\lambda^{\star}-a_k\geq 0$. Thus we always have  $x_k^{\star}+\lambda^{\star}-a_k\geq 0$ and we can write it as $x_k^{\star}+\lambda^{\star}-a_k=\max\{x_k^{\star}+\lambda^{\star}-a_k,0\}$.\\
    \item Because $a_k\geq a_j$ for $j\geq k+1$, we have $a_k-x_k^{\star}\geq a_j-x_k^{\star}$. If $a_j-x_k^{\star}<0$ for all $j\geq k+1$, we have $\beta_j^{\star}=0$ for all $j\geq k+1$. In this case, the condition~\eqref{Sta-k} shows that $a_k-x_k^{\star}\geq \lambda^{\star}\geq 0$. Thus, we always have $a_k-x_k^{\star}\geq 0$.
\end{itemize}
Therefore, the condition~\eqref{Sta-k} becomes $x_k^{\star}\leq a_k\leq x_k^{\star}+\lambda^{\star}$ and
\[     \sum_{i=1}^{k}\max\{x_k^{\star} +\lambda^{\star} -a_i,0\} - \sum_{j=k+1}^{n} \max\{a_j-x_k^{\star}, 0\} = 0.\\ \]

In addition, if $\lambda^{\star} = 0$, then we know $\vect{x}^{\star} = \vect{a}$ and $\alpha_i^{\star} = \beta_j^{\star} = 0$ for all $i \leq k-1$, $j \geq k+1$ and $\sum_{i=1}^k x_i^{\star} = \sum_{i=1}^k a_i < r$. On the other hand, if $\lambda^{\star} > 0$, we know $\sum_{i=1}^{k} x_i^{\star} = \sum_{i=1}^{k} \max\{a_i - \lambda^{\star} - x_k^{\star}, 0\} + kx_k^{\star} = r$, where the first equality holds since $x_k^{\star} \geq a_k - \lambda^{\star}$. Combine these two cases, we have
\[ \sum_{i=1}^{n} \max\{a_i-x_k^{\star}-\lambda^{\star},0\} +kx_k^{\star} = \sum_{i=1}^{k} \max\{a_i-x_k^{\star}-\lambda^{\star},0\} +kx_k^{\star} = \min\left\{\sum_{i=1}^k a_i, r\right\},\] 
where the first equality holds since $x_k^{\star} \geq a_k - \lambda^{\star} \geq a_j - \lambda^{\star}$ for all $j \geq k+1$. 

Setting $l^{\star} := x_k^{\star}$ and $u^{\star} := \lambda^{\star} + l^{\star}$ yields the KKT conditions in~\eqref{kkt: kkt conditions} when $\vect{a}$ is in nonincreasing order. Let
$$
k_0 = |\{i: a_i > u^{\star}\}| \quad \text{and} \quad k_1 = |\{i: a_i \geq l^{\star}\}|,
$$
then the solution $\vect{x}^{\star}$ is obtained:
$$
\vect{x}^{\star}_{1:k_0} = \vect{a}_{1:k_0} - \lambda^{\star} \1_{k_0}, \quad \vect{x}^{\star}_{k_0+1 : k_1} = l^{\star}\1_{k_1 - k_0},\quad \vect{x}^{\star}_{k_1+1:n} = \vect{a}_{k_1+1:n}. 
$$

\section{Additional experiment details}
\subsection{Experiment under $\tau_r=1$}
\label{app: tau_r=1}
To further validate the observed complexity, we conducted additional experiments under $\tau_r=1.0$, i.e., $r = \T_{(k)}(\vect{a})$, with results depicted in Figure~\ref{fig: complexity(r=1)}. This figure provides estimated slope for various $\tau_k$ values when $n$ ranges from $10^6$ to $10^8$, clearly demonstrating that Algorithm~\ref{alg: EIPS} maintains linear computational scaling across all tested configurations.

\begin{figure}[!ht]
    \centering
    \includegraphics[width=0.7\linewidth]{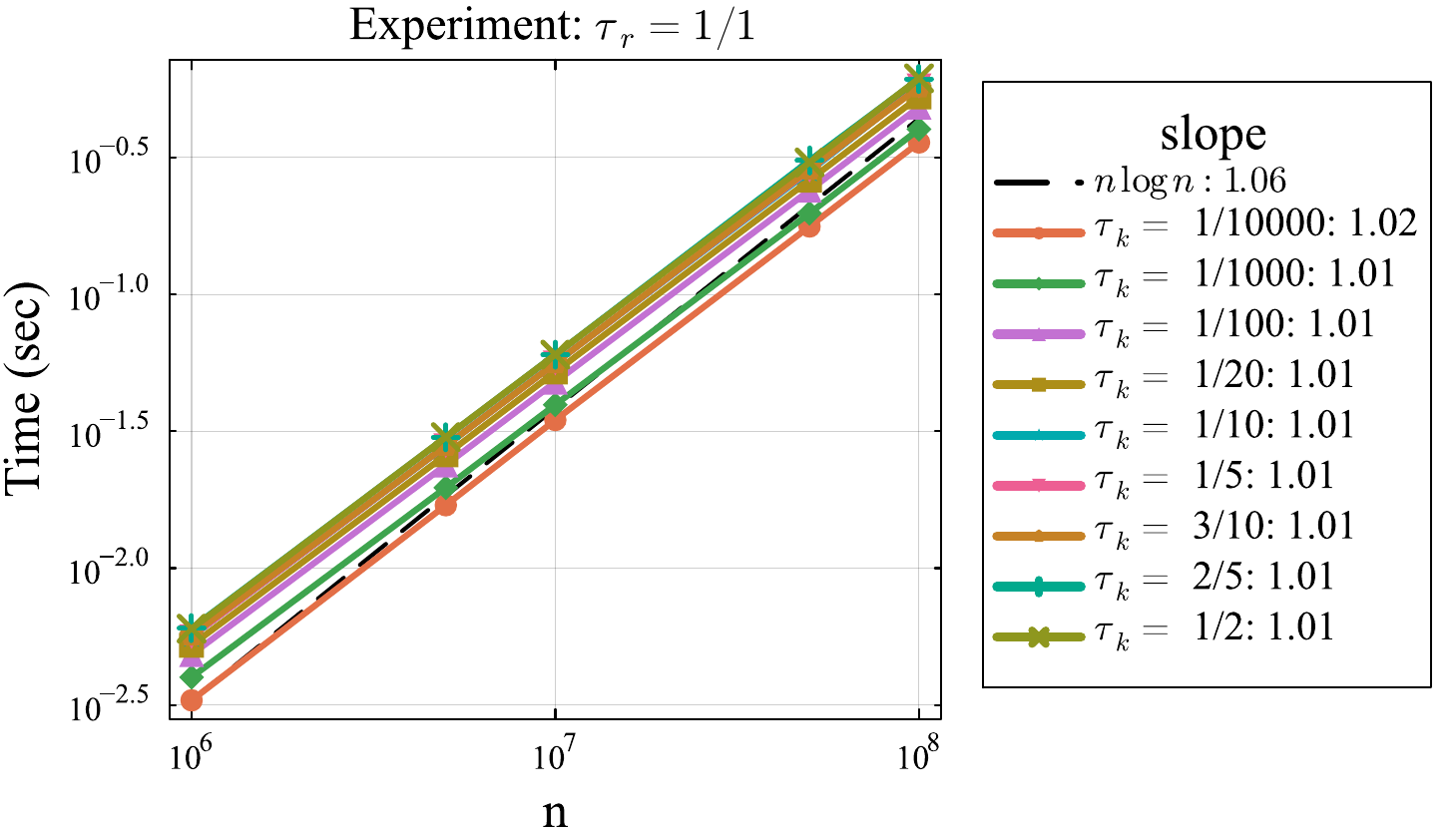}   
    \caption{Mean computation time under $r = \T_{(k)}(\vect{a})$ and $k= \tau_k \cdot n$ averaged over 100 instances. The legend $\emph{slope}$ represents the slope estimated using $\texttt{GLM}$ in Julia, where an approximate value of $1$ indicates $O(n)$ complexity. }
    \label{fig: complexity(r=1)}
\end{figure}

\subsection{Comprehensive performance comparison between EIPS and MOVE}
\label{app: EIPS vs MOVE}

We also include a comparison between EIPS and  MOVE. The results are shown in Figure~\ref{fig: EIPS vs MOVE}. Overall, the performance pattern of MOVE is very similar to that of ESGS. Consequently, the relative behavior between EIPS and MOVE closely matches the comparison between EIPS and ESGS presented in Figure~\ref{fig: time compare heatmap} (left). EIPS remains consistently faster than MOVE across most of the parameter space.

\begin{figure}[!ht]
    \centering
    \includegraphics[width=0.7\linewidth]{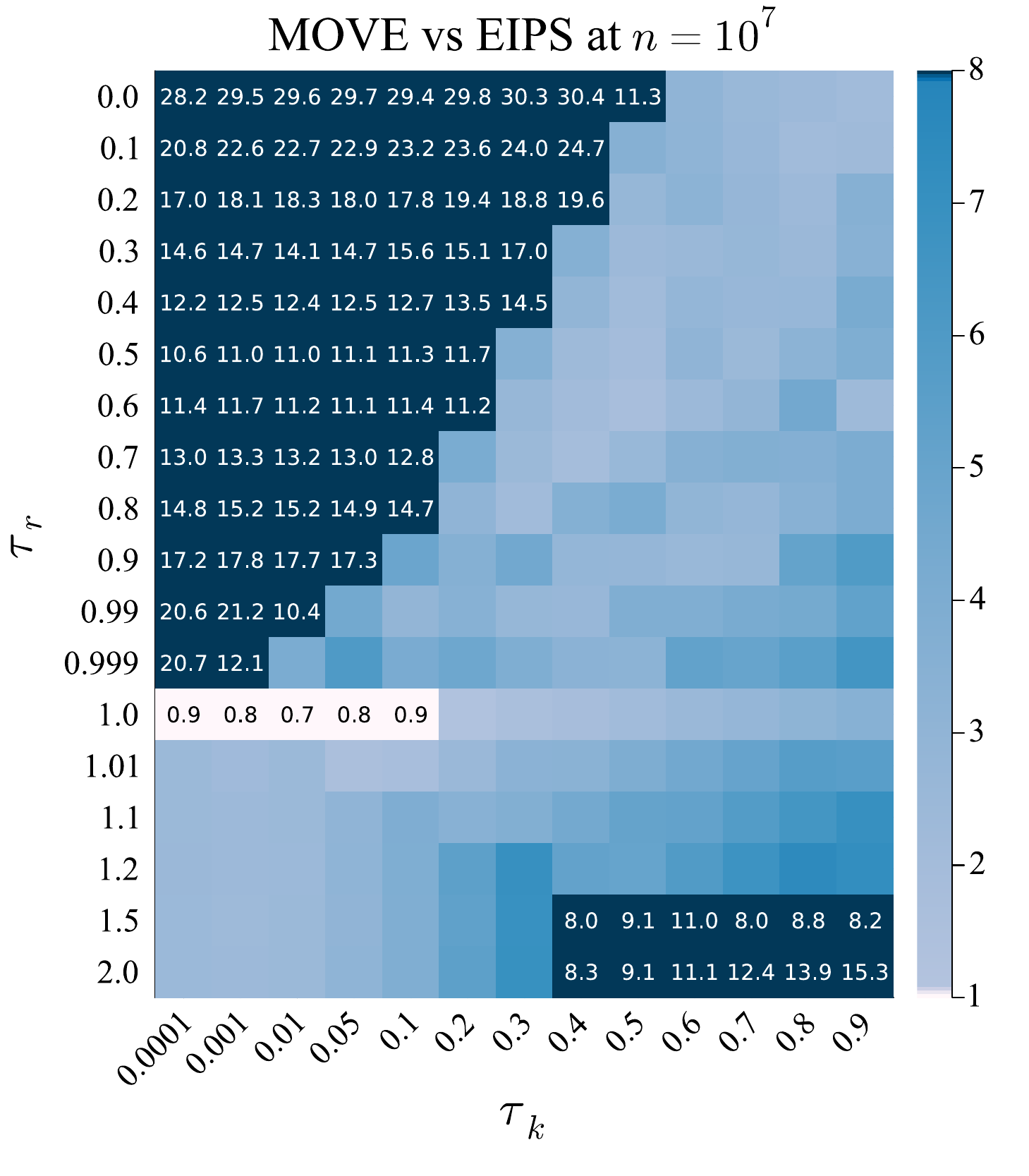}   
    \caption{Mean computation time relative to EIPS at $n=10^7$ averaged over 100 instances. A value of $c>1$ indicates EIPS is $c$ times faster than MOVE, while a ratio $c < 1$ indicates that EIPS is slower, taking $1/c$ times as long. For visualization purposes, values of $c>8$ and $c<1$ are capped at 8 and 1 respectively, and these values are shown in the heatmap.}
    \label{fig: EIPS vs MOVE}
\end{figure}

\end{document}